\theoremstyle{plain}
\newtheorem{thm}{Theorem}[section]
\newtheorem{cor}[thm]{Corollary}
\newtheorem{lem}[thm]{Lemma}
\newtheorem{prop}[thm]{Proposition}
\theoremstyle{definition}
\theoremstyle{remark}
\newtheorem{rem}{Remark}[section]
\numberwithin{equation}{section}
\newcommand{\p}{{\partial}}
\newcommand{\A}{\mathcal A}
\newcommand{\B}{\mathcal B}
\newcommand{\h}{\mathcal H}
\newcommand{\cp}{\mathcal P}
\newcommand{\bR}{\mathbb R}
\newcommand{\bT}{\mathbb T}
\newcommand{\N}{\mathbb N}
\newcommand\ue{\mathbf{u}^\epsilon}
\newcommand\he{\mathbf{H}^\epsilon}
\newcommand{\ep}{\epsilon}
\newcommand{\ly}{\langle y\rangle}
\begin{document}

\title[ local well-posedness theory for MHD boundary layer]
 {MHD boundary layers theory in Sobolev spaces without monotonicity. \uppercase\expandafter{\romannumeral1}. well-posedness theory}
\author[Cheng-Jie Liu]{Cheng-Jie Liu}
\address{Cheng-Jie Liu
\newline\indent
Department of Mathematics,
City University of Hong Kong,
Tat Chee Avenue, Kowloon, Hong Kong}
\email{cjliusjtu@gmail.com}

\author[Feng Xie]{Feng Xie}
\address{Feng Xie
\newline\indent
School of Mathematical Sciences, and LSC-MOE,
 Shanghai Jiao Tong University,
Shanghai 200240, P. R. China}
\email{tzxief@sjtu.edu.cn}

\author[Tong Yang]{Tong Yang}
\address{Tong Yang
\newline\indent
Department of Mathematics,
City University of Hong Kong,
Tat Chee Avenue, Kowloon, Hong Kong
\newline\indent
Department of Mathematics,
Jinan University, Guangzhou 510632, P. R. China
}
\email{matyang@cityu.edu.hk}

\begin{abstract}
We study the well-posedness theory for the MHD boundary layer. 
The boundary layer equations are governed by the Prandtl type equations
that  are derived from the incompressible
MHD system with non-slip boundary condition on the velocity and perfectly conducting condition on the magnetic field. Under the assumption that the initial tangential magnetic field is not zero, we  establish the local-in-time existence, uniqueness of solution for the nonlinear MHD boundary layer equations. Compared with the well-posedness theory of the classical Prandtl equations
for which the monotonicity condition of the tangential velocity plays a crucial role, this monotonicity condition is not needed for MHD boundary layer. 
This justifies  the physical understanding that the magnetic field has a stabilizing effect on MHD boundary layer in rigorous mathematics.
\end{abstract}

\keywords{Prandtl type equations, MHD, well-posedness, Sobolev space, non-monotone condition}

\subjclass[2000]{76N20, 35A07, 35G31,35M33}

\maketitle

\tableofcontents

\section{Introduction and Main Result} \label{S1}
One important problem about Magnetohydrodynamics(MHD) is to understand the 
high Reynolds numbers limits in a domain with boundary. 
In this paper, we consider the following initial boundary value problem for the 
two dimensional (2D) viscous MHD equations (cf. \cite{Cow, davidson, D-L, S-T}) in a periodic domain $\{(t,x,y):~t\in[0,T], x\in\bT, y\in\bR_+\}:$
\begin{align}\label{eq_mhd}
\left\{
\begin{array}{ll}
\p_t\ue+(\ue\cdot\nabla)\ue-(\he\cdot\nabla)\he
+\nabla p^\ep=\mu\epsilon\triangle\ue,\\
\p_t\he
-\nabla\times(\ue\times \he)
=\kappa\epsilon\triangle \he,\\
\nabla\cdot\ue=0,\quad\nabla\cdot \he=0.
\end{array}
\right.
\end{align}
Here, we assume the viscosity and resistivity coefficients have the same order of a small parameter $\epsilon$. $\ue=(u^\ep_1,u^\ep_2)$ denotes the velocity vector, $\he=(h^\ep_1,h^\ep_2)$ denotes the magnetic field, and $p^\epsilon=\tilde p^\epsilon+\frac{|\he|^2}{2}$ denotes the total pressure with $\tilde p^\epsilon$ the pressure of the fluid.  
On the boundary,  the non-slip boundary condition is imposed on velocity field
\begin{align}
 \label{bc_u}
 \ue|_{y=0}=\bf{0},
 \end{align}
and the perfectly conducting boundary condition on magnetic field
 \begin{align} \label{bc_h}
 h_2^\ep|_{y=0}=\p_yh_1^\ep|_{y=0}=0.
 \end{align}
 The formal limiting system of \eqref{eq_mhd} yields the ideal MHD equations
when $\epsilon$ tends to zero. However, there is a mismatch in the tangential velocity between the equations \eqref{eq_mhd} and the limiting equations on the boundary $y=0$. This is  why a boundary layer forms in the vanishing viscosity and resistivity limit process. To find out the terms in \eqref{eq_mhd} whose contribution is essential for the boundary layer,
we use the same scaling as  the
one used in \cite{OS},
 \begin{align*}
 t=t,\quad x=x,\quad \tilde y=\epsilon^{-\frac{1}{2}}y,
 \end{align*}
then, set
\begin{align*}
\left\{
\begin{array}{ll}
u_1(t,x,\tilde y)=u_1^\ep(t,x, y),\\
u_2(t,x,\tilde y)=\epsilon^{-\frac{1}{2}}u^\ep_2(t,x, y),
\end{array}
\right.
\qquad
\left\{
\begin{array}{ll}
h_1(t,x,\tilde y)=h_1^\ep(t,x, y),\\
h_2(t,x,\tilde y)=\epsilon^{-\frac{1}{2}}h_2^\ep(t,x, y),
\end{array}
\right.
\end{align*}
and
\[p(t,x,\tilde y)=p^\ep(t,x,y).\]
Then by taking  the leading order, the equations \eqref{eq_mhd} are reduced to  
\begin{align}
\label{eq_bl}
\left\{
\begin{array}{ll}
\partial_tu_1+u_1\partial_xu_1+u_2\partial_yu_1-h_1\partial_xh_1-h_2\partial_yh_1+\p_xp=\mu\partial^2_yu_1,\\
\p_yp=0,\\
\partial_th_1+\partial_y(u_2h_1-u_1h_2)=\kappa\partial_y^2h_1,\\
\partial_th_2-\partial_x(u_2h_1-u_1h_2)=\kappa\partial_y^2h_2,\\
\partial_xu_1+\partial_yu_2=0,\quad \partial_xh_1+\partial_yh_2=0,
\end{array}
\right.
\end{align}
in $\{t>0, x\in\mathbb{T},y\in\mathbb{R}^+\}$, where we have replaced $\tilde y$ by $y$ for simplicity of notations.

The second equation of \eqref{eq_bl} implies that the leading order of boundary layers for the total pressure $p^\epsilon(t,x,y)$ is invariant across the boundary layer, and should be matched to the outflow pressure $P(t,x)$ on  top of boundary layer, that is, 
the trace of pressure of ideal MHD flow. Consequently, we have
\[p(t,x,y)~\equiv~P(t,x).\]
It is worth noting that the pressure $\tilde p^\epsilon$ of the fluid may have the leading order of boundary layers because of the appearance of the boundary layer for magnetic field. It is different from the general fluid in the absence of magnetic field, for which the leading boundary layer for the pressure of the fluid always vanishes.

The tangential component $u_1(t,x,y)$ of velocity field, respectively $h_1(t,x,y)$ of magnetic filed, should match the outflow tangential velocity $U(t,x)$, respectively the outflow tangential magnetic
field  $H(t,x)$, on the top of boundary layer, that is,
 \begin{equation}\label{bc_infty} 
 	u_1(t,x,y)~\rightarrow~U(t,x),\quad h_1(t,x,y)~\rightarrow~H(t,x),\quad{\mbox as}\quad y~\rightarrow~+\infty,
 \end{equation}
 where $U(t,x)$ and $H(t,x)$ are the trace of tangential velocity and magnetic
field respectively. Therefore, we have the following ``matching'' condition:
\begin{align}\label{Brou}
	U_t+UU_x-HH_x+P_x=0,\quad H_t+UH_x-HU_x=0,
	\end{align}
which shows that \eqref{bc_infty} is consistent with the first and third equations of \eqref{eq_bl}. 	
Moreover, on the boundary $\{y=0\}$, the boundary conditions \eqref{bc_u} and \eqref{bc_h} give
\begin{align}
\label{bc_bl}
u_1|_{y=0}=u_2|_{y=0}=\p_yh_1|_{y=0}=h_2|_{y=0}=0.
\end{align}

On the other hand, it is noted that equation $\eqref{eq_bl}_4$ is a direct consequence of equations $\eqref{eq_bl}_3$, $\p_xh_1+\p_yh_2=0$ in $(\ref{eq_bl})_5$ and the boundary condition (\ref{bc_bl}). Hence, we only need
to study the following initial-boundary value problem of the MHD boundary layer equations in $\{t\in[0,T], x\in\mathbb{T},y\in\mathbb{R}^+\}$,
\begin{align}
\label{bl_mhd}
\left\{
\begin{array}{ll}
\partial_tu_1+u_1\partial_xu_1+u_2\partial_yu_1-h_1\partial_xh_1-h_2\partial_yh_1=\mu\partial^2_yu_1-P_x,\\
\partial_th_1+\partial_y(u_2h_1-u_1h_2)=\kappa\partial_y^2h_1,\\
\partial_xu_1+\partial_yu_2=0,\quad \partial_xh_1+\partial_yh_2=0,\\
u_1|_{t=0}=u_{10}(x,y),\quad h_1|_{t=0}=h_{10}(x,y),\\
(u_1,u_2,\partial_yh_1,h_2)|_{y=0}=\textbf{0},\quad
\lim\limits_{y\rightarrow+\infty}(u_1,h_1)=(U,H)(t,x).
\end{array}
\right.
\end{align}

The aim of this paper is to show the local well-posedness of the system \eqref{bl_mhd} with non-zero tangential component of that magnetic field, that is,
without loss of generality, by assuming
\begin{equation}
	\label{ass_m}
	h_1(t,x,y)>0.
\end{equation}
Let us first introduce some weighted Sobolev spaces for later use. Denote
 \[\Omega~:=~\big\{(x,y):~x\in\bT,~y\in\bR_+\big\}.\]
For any $l\in\bR,$  denote by $L_l^2(\Omega)$ the weighted Lebesgue space with respect to the spatial variables: 
\[L_l^2(\Omega)~:=~\Big\{f(x,y):~\Omega\rightarrow\bR,~
\|f\|_{L^2_l(\Omega)}:=\Big(\int_{\Omega}\ly^{2l}|f(x,y)|^2dxdy\Big)^{\frac{1}{2}}<+\infty\Big\},\qquad \ly~=~1+y,
\] 
and then, for any given $m\in\mathbb{N},$ denote by $H_l^m(\Omega)$ the weighted Sobolev spaces:
$$H_l^m(\Omega)~:=~\Big\{f(x,y):~\Omega\rightarrow\bR,~\|f\|_{H_l^m(\Omega)}:=\Big(\sum_{m_1+m_2\leq m}\|\ly^{l+m_2}\p_x^{m_1}\p_y^{m_2}f\|_{L^2(\Omega)}^2\Big)^{\frac{1}{2}}<+\infty\Big\}.$$

Now, we can state the main result as follows.
\begin{thm}\label{Th1}
Let $m\geq5$ be a integer, and $l\geq0$ a real number. Assume that the outer flow $(U,H,P_x)(t,x)$
satisfies that for some $T>0,$ 
\begin{equation}\label{ass_outflow}
	M_0~:=~\sum_{i=0}^{2m+2}\Big(\sup_{0\leq t\leq T}\|\p_t^i(U,H,P)(t,\cdot)\|_{H^{2m+2-i}(\bT_x)}+\|\p_t^i(U,H,P)\|_{L^2(0,T;H^{2m+2-i}(\bT_x))}\Big)<+\infty.
	\end{equation}
Also, we suppose the initial data $(u_{10},h_{10})(x,y)$ satisfies
 \begin{equation}\label{ass_ini}
 	\Big(u_{10}(x,y)-U(0,x),h_{10}(x,y)-H(0,x)\Big)\in H^{3m+2}_l(\Omega),
 \end{equation} 
 and the compatibility conditions up to $m$-th order. Moreover, there exists a sufficiently small constant $\delta_0>0$ such that 
 \begin{align}\label{ass_bound}
 \big|\ly^{l+1}\p_y^i(u_{10}, h_{10})(x,y)\big|\leq(2\delta_0)^{-1},	\qquad h_{10}(x,y)\geq2\delta_0,\quad\mbox{for}\quad i=1,2,~ (x,y)\in\Omega.
 \end{align}
Then, there exist a postive time $0<T_*\leq T$ and a unique solution $(u_1,u_2, h_1,h_2)$ to the initial boundary value problem (\ref{bl_mhd}), such that 
\begin{align}\label{est_main1}
	(u_1-U,h_1-H)\in\bigcap_{i=0}^mW^{i,\infty}\Big(0,T_*;H_l^{m-i}(\Omega)\Big),
\end{align}
and
\begin{align}\label{est_main2}
(u_2+U_xy,h_2+H_xy)&\in\bigcap_{i=0}^{m-1}W^{i,\infty}\Big(0,T_*;H_{-1}^{m-1-i}(\Omega)\Big),\nonumber\\
	(\p_yu_2+U_x,\p_yh_2+H_x)&\in\bigcap_{i=0}^{m-1}W^{i,\infty}\big(0,T_*;H_l^{m-1-i}(\Omega)\big).
\end{align}
Moreover, if $l>\frac{1}{2},$
\begin{align}\label{est_main3}
&(u_2+U_xy,h_2+H_xy)\in\bigcap_{i=0}^{m-1}W^{i,\infty}\Big(0,T_*;L^\infty\big(\bR_{y,+};H^{m-1-i}(\bT_x)\big)\Big).
\end{align}
\end{thm}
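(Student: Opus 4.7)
The plan is to establish Theorem \ref{Th1} by a weighted energy method combined with a cancellation scheme that uses the non-degeneracy $h_1 \geq \delta_0$ in place of the Oleinik monotonicity of $\partial_y u_1$ used for classical Prandtl. I first reformulate (\ref{bl_mhd}) in terms of the perturbations $v_1 := u_1 - U$ and $b_1 := h_1 - H$, which decay as $y \to \infty$ and satisfy parabolic equations with forcing expressed through $(U,H,P_x)$ and its $x$-derivatives (bounded by $M_0$ via \eqref{ass_outflow}); the normal components $u_2, h_2$ are reconstructed by integrating $-\partial_x u_1, -\partial_x h_1$ in $y$, which accounts for the $-U_x y, -H_x y$ subtractions in \eqref{est_main2}. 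Because the equations are uniformly parabolic in $y$, the normal derivatives $\partial_y^2 u_1$ and $\partial_y^2 h_1$ can be read off algebraically from the momentum and induction equations, so controlling the full $H^m_l(\Omega)$ norm reduces to controlling only the purely tangential derivatives $\partial_t^i\partial_x^j$ with $i+j\leq m$ in suitable weighted $L^2$ norms.

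The principal obstacle, and the step I expect to be genuinely hard, is the well-known loss of one tangential derivative in the trilinear terms $u_2\partial_y u_1$ and $h_2 \partial_y h_1$: after applying $\partial_x^m$, the factors $\partial_x^m u_2$ and $\partial_x^m h_2$ are $y$-antiderivatives of $\partial_x^{m+1} u_1$ and $\partial_x^{m+1} h_1$, exceeding the regularity level to be closed. To compensate for this without monotonicity, I plan to introduce ``good unknowns'' of the schematic form
$$\Phi_m \;:=\; \partial_x^m v_1 \;-\; \frac{\partial_y u_1}{h_1}\,\int_0^y \partial_x^m b_1\,dy', \qquad \Psi_m \;:=\; \partial_x^m b_1,$$
together with analogues carrying mixed $\partial_t^i\partial_x^j$, chosen so that in the equations satisfied by $(\Phi_m,\Psi_m)$ the top-order contributions from $\partial_x^m u_2\cdot\partial_y u_1$ in the velocity equation cancel, via the incompressibility identities, against those arising from $-h_2\partial_y h_1$ and the coupling with the induction equation multiplied by $\partial_y u_1/h_1$. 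Identifying the precise multiplier and verifying that all the commutator and remainder terms remain at order $m$ (so that they can be absorbed in the weighted $H^m_l$ norm after integration by parts) is the main technical effort; this is the step where the lower bound $h_1\geq \delta_0$ from \eqref{ass_bound} enters crucially, both to divide by $h_1$ and to propagate that positivity on a short time interval using $H^m_l \hookrightarrow L^\infty$ with $m\geq 5$.

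With the cancellation in place, the weighted $L^2_l$ energy estimate for $(\Phi_m,\Psi_m)$, summed over all $i+j\leq m$ and all normal-weight indices, closes as a differential inequality of the form $\frac{d}{dt}\mathcal E_m(t)\leq C(M_0,\delta_0)\bigl(1+\mathcal E_m(t)\bigr)^N$, giving uniform bounds on some $[0,T_*]$. To produce an actual solution, I introduce a tangential parabolic regularization by adding $-\sigma\partial_x^2 u_1$ and $-\sigma\partial_x^2 h_1$ to the two evolution equations (with the compatibility data corrected consistently), for which local existence at higher regularity follows from a standard fixed-point argument; passing $\sigma\to 0$ using the $\sigma$-independent estimate above yields the desired solution in the class \eqref{est_main1}. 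Uniqueness is proved by applying the same cancellation structure to the difference of two solutions at order $m-1$, where the worst terms already sit one derivative below top order. Finally, the bounds \eqref{est_main2} on $u_2+U_x y$ and $h_2+H_x y$ follow by integrating $-\partial_x v_1,-\partial_x b_1$ in $y$ against the weight $\langle y\rangle^{-1}$, while \eqref{est_main3} for $l>1/2$ is obtained from the weighted embedding $H^1_l(\bR_+)\hookrightarrow L^\infty(\bR_+)$ applied slicewise in $x$.
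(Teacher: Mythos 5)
Your overall architecture --- reformulating around the outer flow, isolating the tangential derivative loss, repairing it with a good unknown built from the magnetic stream function and the lower bound $h_1\geq\delta_0$, then regularizing tangentially and passing to the limit --- is the same as the paper's. The genuine gap is in the choice of good unknowns, which is exactly the step you flag as the main technical effort. After applying $\partial_x^m$ there are \emph{four} top-order loss terms, not one: $\partial_x^m u_2\,\partial_y u_1$ and $-\partial_x^m h_2\,\partial_y h_1$ in the momentum equation, and $\partial_x^m u_2\,\partial_y h_1$ and $-\partial_x^m h_2\,\partial_y u_1$ in the induction equation. Your substitution corrects only the velocity, $\Phi_m=\partial_x^m v_1-\frac{\partial_y u_1}{h_1}\partial_x^m\psi$ with $\psi=\partial_y^{-1}b_1$, and keeps $\Psi_m=\partial_x^m b_1$ unmodified. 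Since the equation for $\psi$ (cf.\ \eqref{eq-psi}) carries the factor $h_1\,\partial_x^m u_2$ at top order, subtracting $\frac{\partial_y u_1}{h_1}\partial_x^m\psi$ removes only the first of the four terms. The term $\partial_x^m u_2\,\partial_y h_1$ in the induction equation requires the analogous correction of the \emph{magnetic} unknown, $\Psi_m=\partial_x^m b_1-\frac{\partial_y h_1}{h_1}\partial_x^m\psi$; and the two terms involving $\partial_x^m h_2=-\partial_x^{m+1}\psi$ are not removed by the stream-function equation at all --- they cancel against the convection terms $-h_1\partial_x^{m+1}h_1$ and $-h_1\partial_x^{m+1}u_1$ only after one rewrites $\partial_x^m h_1$ and $\partial_x^m u_1$ in terms of the \emph{corrected} unknowns, using $h_1\cdot\frac{\partial_y h_1}{h_1}=\partial_y h_1$ and $h_1\cdot\frac{\partial_y u_1}{h_1}=\partial_y u_1$. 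With your $\Psi_m$, integrals of the schematic form $\int \partial_y h_1\cdot\big(\partial_y^{-1}\partial_x^{m+1}h_1\big)\,\Phi_m\,\langle y\rangle^{2l}$ and $\int \partial_y h_1\cdot\big(\partial_y^{-1}\partial_x^{m+1}u_1\big)\,\Psi_m\,\langle y\rangle^{2l}$ survive and cannot be closed at order $m$ by any integration by parts, so the differential inequality for $\mathcal E_m$ does not follow. The fix is the symmetric pair of good unknowns \eqref{new_qu} in the paper, with \emph{both} components shifted by multiples of $\partial_\tau^\beta\psi$.

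Two smaller points. First, the reduction to purely tangential derivatives is not purely algebraic: for multi-indices containing $\partial_y$ the paper still runs a full weighted energy estimate, and the boundary traces $\partial_yD^\alpha u|_{y=0}$ at top order must be evaluated from the equation itself (this is where $P_x$ and quadratic boundary contributions enter); reading $\partial_y^2$ off the equation does not by itself dispose of these boundary terms. Second, uniqueness at order $m-1$ suffers the identical derivative loss (the difference of normal velocities is $-\partial_y^{-1}\partial_x$ of the difference of tangential velocities), so the difference estimate must also be performed on the corrected pair of unknowns --- the paper does this at the $L^2$ level with $\bar u=\tilde u-\eta_1^2\partial_y^{-1}\tilde h$ and $\bar h=\tilde h-\eta_2^2\partial_y^{-1}\tilde h$; your remark that the worst terms ``already sit one derivative below top order'' does not make them controllable without that substitution.
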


\begin{rem}
	Note that the regularity assumption on the outflow $(U,H,P)$ and the initial data $(u_{10}, h_{10})$ is not optimal. Here, we need the regularity  to simplify the construction of approximate solution, cf. Section 4. One may relax the regularity requirement by using  other approximations.
\end{rem}

We now review some related works to the problem studied in this paper. First of all, the study on fluid around a rigid body with high Reynolds numbers is 
an important problem in both  physics and mathematics. The classical work
can be traced back to Prandtl in 1904 about the derivation of the
Prandtl equations for boundary layers from the incompressible Navier-Stokes
equations with non-slip boundary condition, cf. \cite{P}. About sixty years after
its derivation, the first systematic work in rigorous mathematics
was achieved by Oleinik, cf. \cite{O}, in which she showed
that under the monotonicity condition on the tangential velocity
field in the normal direction to the boundary, local in time
well-posedness of the Prandtl system can be justified in 2D by using 
the Crocco tranformation. 
This result
together with some  extensions are presented in
Oleinik-Samokhin's classical
book \cite{OS}. Recently, this well-posedness result was proved by using 
simply energy method  in the framework of
 Sobolev spaces in \cite{AWXY} and \cite{MW1} independently by taking care
of the cancellation in the convection terms to overcome
the loss of derivative in the tangential direction. 
Moreover, by imposing an additional
favorable condition on the pressure,
a global in time weak solution was obtained in
\cite{XZ}. Some three space dimensional cases were studied for both classical and weak solutions in \cite{LWY1,LWY2}.
Since Oleinik's classical work,  the necessity of the monotonicity condition 
on the velocity field for well-posedness remained as
a question until 1980s when Caflisch and Sammartino \cite{SC1, SC2} obtained
the well-posedness in the framework of analytic functions without this
condition, cf. \cite{IV,KV, KMVW, LCS, Mae, ZZ} and the references therein. And recently, the analyticity condition can be further relaxed
to Gevrey regularity, cf. \cite{GM, GMM, LWX, L-Y}. 

When the monotonicity condition is violated, separation of the boundary
layer is expected and observed for classical fluid. For this, E-Engquist constructed a finite
time blowup solution to the Prandtl
equations in \cite{EE}. Recently, when the background shear flow has a non-degenerate critical point, some interesting ill-posedness (or instability) phenomena of solutions to both
the linear and nonlinear Prandtl equations around
the shear flow are studied, cf.
\cite{GD,GN,G,GN1,LWY, LY} and the references therein. All these results show that the monotonicity
 assumption on the tangential velocity is essential for the well-posedness
except in the framework of analytic functions or Gevrey functions.

On the other hand, for electrically conducting fluid such as plasmas
and  liquid metals, the system of magnetohydrodynamics(denoted by MHD)  is a fundamental
system to describe the movement of  fluid under the influence of
 electro-magnetic field. The study on the MHD was initiated by
Alfv\'en \cite{Alf} who showed that the magnetic field can induce current in 
a moving conductive fluid with a new propagation 
mechanism along the magnetic field, called Alfv\'en waves.


For plasma, the boundary layer equations can be derived from the fundamental
MHD system and they are more complicated than the classical Prandtl system because
of the coupling of the magnetic field with velocity field through the
Maxwell equations. On the other hand, in physics, it is believed
that the magnetic field has a stabilizing effect on the boundary layer that
could provide a mechanism for containment of, for example, 
the high temperature gas. If the magnetic field is transversal to the boundary,
there are extensive discussions on the so called Hartmann boundary layer,
cf. \cite{davidson, Har, H-L}. In addition, there are works on the stability of boundary
 layers
with minimum Reynolds number for flow with different structures to reveal
the difference from the classical boundary layers without electro-magnetic
field, cf. \cite{A,D,R}.

In terms of mathematical derivation when the non-slip boundary condition for the velocity is present, the boundary layer systems that capture the
leading order of fluid variables around the boundary depend on three
physical parameters, magnetic Reynolds number, Reynolds number
and their ratio called magnetic Prandtl number. When the Reynolds
number tends to infinity while the magnetic Reynolds number
is fixed, the derived boundary layer system is similar to the Prandtl
system for classical fluid and its well-posedness was discussed in
Oleinik-Samokhin's book \cite{OS}, for which the monotonicity condition on 
the velocity field is needed. When the Reynolds number is fixed while 
the magnetic Reynolds number tends to infinity that corresponds
to infinite magnetic Prandtl number, the boundary layer system is similar to inviscid 
Prandtl system and the monotonicity condition on the velocity field is not
needed for well-posedness. The case with finite magnetic Prandtl number when
both the Reynolds number and magnetic Reynolds number tend to infinity
at the same rate, the boundary layer system is totally different from
the classical Prandtl system, and this is the system to be discussed in
this paper. Note that for this system, there are no any mathematical well-posedness results obtained so far in the Sobolev spaces. Furthermore, we mention that in \cite{XXW}, the authors establish the vanishing viscosity limit for the MHD system in a bounded smooth domain of $\mathbb{R}^d, d=2,3$ with a slip boundary condition, while the leading order of boundary layers for both velocity and magnetic field vanishes because of the slip boundary conditions.

Precisely, in this paper, to capture the stabilizing effect of the magnetic field,
we  establish the well-posedness theory for the  problem (\ref{bl_mhd}) without any monotonicity assumption on the tangential velocity. The only essential condition is that
 the background tangential magnetic field has a lower positive bound. 
Hence, the result in this paper enriches
the classical local well-posedness results of the classical Prandtl equations. 
In the same time, it is  in agreement with the general physical understanding
that the magnetic field stabilizes the boundary layer. 

The rest of the paper is organized as follows. Some preliminaries are given in Section 2. In Section 3, we
establish the a priori energy estimates for the nonlinear problem (\ref{bl_mhd}). The
local-in-time existence and uniqueness of the solution to (\ref{bl_mhd}) in Sobolev space are given in Section 4. In Section 5, we introduce another method for
the study on  the well-posedness theory for (\ref{bl_mhd}) by using
a nonlinear coordinate transform in the spirit of Crocco transformation
for the classical Prandtl system. Finally, some technical proof of a
lemma is given in the Appendix.


\section{Preliminaries}

Firstly, we introduce some notations. 
Use the tangential derivative operator
 $$\p^\beta_{\tau}=\p^{\beta_1}_t\p^{\beta_2}_x,\quad\mbox{for}\quad\beta=(\beta_1,\beta_2)\in\N^2,\quad|\beta|=\beta_1+\beta_2,$$
 and then denote the derivative operator (in both time and space) by
  $$\quad D^\alpha=\p_\tau^\beta\p_y^k, \quad\mbox{for}\quad\alpha=(\beta_1,\beta_2,k)\in\N^3, \quad|\alpha|=|\beta|+k.$$
  Set $e_i\in\N^2, i=1,2$ and $E_j\in\N^3, j=1,2,3$ by
  $$e_1=(1,0)\in\N^2, ~e_2=(0,1)\in\N^2,~E_1=(1,0,0)\in\N^3, ~E_2=(0,1,0)\in\N^3, ~E_3=(0,0,1)\in\N^3,$$
   and denote by $\p_y^{-1}$ the inverse of derivative $\p_y$, i.e., $(\p_y^{-1}f)(y):=\int_0^yf(z)dz.$
Moreover, we use the notation $[\cdot,\cdot]$ to denote the commutator, and denote  a nondecreasing  polynomial function
by $\cp(\cdot)$, which may differ from line to line.

For $m\in\mathbb{N},$ define the function spaces $\h_l^m$ of measurable functions $f(t,x,y): [0,T]\times\Omega\rightarrow\bR,$ such that for any $t\in[0,T],$ 
\begin{align}\label{def_h}
\|f(t)\|_{\h_l^m}~:=~
	\Big(\sum_{|\alpha|\leq m}\|\ly^{l+k}D^\alpha f(t,\cdot)\|_{L^2(\Omega)}^2\Big)^{\frac{1}{2}}<+\infty.
	\end{align}
The following inequalities will be used frequently in this paper.

\begin{lem}\label{lemma_ineq}
	For  proper functions $f,g,h$, the following holds.\\
	
	
	\romannumeral1) 
	If  $\lim\limits_{y\rightarrow+\infty}(fg)(x,y)=0,$ then
	\begin{equation}\label{trace}
	\Big|\int_{\bT_x}(fg)|_{y=0}dx\Big|\leq  \|\p_yf\|_{L^2(\Omega)}\|g\|_{L^2(\Omega)}+\|f\|_{L^2(\Omega)}\|\p_yg\|_{L^2(\Omega)}.
\end{equation}
In particular,   if $\lim\limits_{y\rightarrow+\infty}f(x,y)=0,$ then
	\begin{equation}\label{trace0}
	\big\|f|_{y=0}\big\|_{L^2(\bT_x)}\leq  \sqrt{2}~\|f\|_{L^2(\Omega)}^{\frac{1}{2}}\|\p_yf\|_{L^2(\Omega)}^{\frac{1}{2}}.
\end{equation}	
	

\romannumeral2) For $l\in\bR$ and an integer $m\geq3, $  any $\alpha=(\beta,k)\in\N^3, \tilde\alpha=(\tilde\beta,\tilde k)\in\N^3$ with $|\alpha|+|\tilde\alpha|\leq m$,
	\begin{align}\label{Morse}
		\big\|\big(D^\alpha f\cdot D^{\tilde\alpha}g\big)(t,\cdot)\big\|_{L^2_{l+k+\tilde k}(\Omega)}\leq C\|f(t)\|_{\h_{l_1}^m}\|g(t)\|_{\h_{l_2}^m},\qquad \forall~l_1,l_2\in\bR,\quad l_1+l_2=l.
	\end{align}
	
\romannumeral3) 
For any $\lambda>\frac{1}{2}, \tilde\lambda>0$, 
	\begin{align}\label{normal}
		 \big\|\ly^{-\lambda}(\p_y^{-1}f)(y)\big\|_{L^2_y(\bR_+)}\leq \frac{2}{2\lambda-1}\big\|\ly^{1-\lambda}f(y)\big\|_{L^2_y(\bR_+)},~ \big\|\ly^{-\tilde\lambda}(\p_y^{-1}f)(y)\big\|_{L^\infty_y(\bR_+)}\leq \frac{1}{\tilde\lambda}\big\|\ly^{1-\tilde\lambda}f(y)\big\|_{L^\infty_y(\bR_+)},
	\end{align}
and then, for $l\in\bR$, an integer $m\geq3, $  and any $\alpha=(\beta,k)\in\N^3, \tilde \beta=(\tilde \beta_1,\tilde\beta_2)\in\N^2$ with $|\alpha|+|\tilde\beta|\leq m$,
\begin{align}\label{normal0}
	\big\|\big(D^\alpha g\cdot\p_\tau^{\tilde\beta}\p_y^{-1}h\big)(t,\cdot)\big\|_{L^2_{l+k}(\Omega)}\leq C\|g(t)\|_{\h_{l+\lambda}^m}\|h(t)\|_{\h_{1-\lambda}^m}.
\end{align}
In particular, for $\lambda=1,$
\begin{align}\label{normal1}
	\big\|\ly^{-1}(\p_y^{-1}f)(y)\big\|_{L^2_y(\bR_+)}\leq 2\big\|f\big\|_{L^2_y(\bR_+)},\quad\big\|\big(D^\alpha g\cdot\p_\tau^{\tilde\beta}\p_y^{-1}h\big)(t,\cdot)\big\|_{L^2_{l+k}(\Omega)}\leq C\|g(t)\|_{\h_{l+1}^m}\|h(t)\|_{\h_{0}^m}.
\end{align}

\romannumeral4) 
For any $\lambda>\frac{1}{2}$, 
	\begin{align}\label{normal2}
		\big\|(\p_y^{-1}f)(y)\big\|_{L^\infty_{y}(\bR_+)}\leq C\|f\|_{L_{y,\lambda}^2(\bR_+)},
	\end{align}
and then, for $l\in\bR$, an integer $m\geq2, $  and any $\alpha=(\beta,k)\in\N^3, \tilde \beta=(\tilde \beta_1,\tilde\beta_2)\in\N^2$ with $|\alpha|+|\tilde\beta|\leq m$,
\begin{align}\label{normal3}
	\big\|\big(D^\alpha f\cdot\p_\tau^{\tilde\beta}\p_y^{-1}g\big)(t,\cdot)\big\|_{L^2_{l+k}(\Omega)}\leq C\|f(t)\|_{\h_l^m}\|g(t)\|_{\h_\lambda^m}.
\end{align}
	\end{lem}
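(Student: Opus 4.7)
My plan is to treat the four parts in order, since each one feeds the next: (i) gives a trace inequality, (iii) and (iv) are Hardy-type bounds for $\p_y^{-1}$, and (ii) together with those Hardy bounds yields the weighted Moser-type product estimates in \eqref{normal0}, \eqref{normal1}, and \eqref{normal3}.

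For (i), I would start from the identity $(fg)(x,0)=-\int_0^\infty \p_y(fg)(x,y)\,dy$, which is legitimate under the decay hypothesis $\lim_{y\to\infty}(fg)(x,y)=0$. Expanding the derivative, integrating in $x$, and applying Cauchy--Schwarz in $(x,y)$ gives \eqref{trace} directly. Then \eqref{trace0} follows by specializing to $g=f$: the left-hand side becomes $\|f|_{y=0}\|_{L^2(\bT_x)}^2$ and the right-hand side becomes $2\|f\|_{L^2(\Omega)}\|\p_yf\|_{L^2(\Omega)}$, after which I take square roots.

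For (iii) and (iv), I would first prove the one-dimensional Hardy estimates in $y$. The $L^2$ bound in \eqref{normal} is obtained by writing $\p_y^{-1}f(y)=\int_0^y f(z)\,dz$, applying Cauchy--Schwarz with the weight split $1=\ly^{1-\lambda}\cdot \ly^{\lambda-1}$ for each fixed $y$, and then using the explicit integral $\int_0^y \ly^{2(\lambda-1)}dz$. A parallel but simpler argument gives the $L^\infty$ half of \eqref{normal} and the bound \eqref{normal2}: for the latter, one writes $|\p_y^{-1}f(y)|\le \|\ly^\lambda f\|_{L^2_y}\cdot\bigl(\int_0^\infty \ly^{-2\lambda}dz\bigr)^{1/2}$, which is finite exactly because $\lambda>1/2$. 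The case $\lambda=1$ in \eqref{normal1} is the limiting version of the $L^2$-Hardy inequality on $(0,\infty)$ and follows by the standard integration-by-parts trick $\int_0^\infty \ly^{-2}\bigl(\p_y^{-1}f\bigr)^2 dy = -\int_0^\infty \p_y(\ly^{-1})\cdots$ combined with Cauchy--Schwarz.

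For the Moser-type inequality (ii), the standard idea is to split into two regimes according to which factor has more derivatives. If $|\alpha|\le \lfloor m/2\rfloor$, then by two-dimensional Sobolev embedding applied to $\ly^{l_1+k}D^\alpha f$ (using that $m\ge 3$ provides at least two extra derivatives), we have $\|\ly^{l_1+k}D^\alpha f\|_{L^\infty(\Omega)}\lesssim \|f\|_{\h_{l_1}^m}$, and the remaining factor $\ly^{l_2+\tilde k}D^{\tilde\alpha}g$ is in $L^2(\Omega)$ with norm $\le \|g\|_{\h_{l_2}^m}$. The symmetric case is handled analogously, and the two bounds are combined after noting that the weight $\ly^{l+k+\tilde k}$ is exactly $\ly^{l_1+k}\cdot \ly^{l_2+\tilde k}$ with $l_1+l_2=l$.

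Finally, the composite estimates \eqref{normal0}, \eqref{normal1}, and \eqref{normal3} follow by combining (ii) with the Hardy bounds: one commutes $\p_\tau^{\tilde\beta}$ past $\p_y^{-1}$ (these act on different variables) and applies \eqref{normal} (resp.\ \eqref{normal2}) pointwise in $(t,x)$ to replace $\|\p_\tau^{\tilde\beta}\p_y^{-1}h\|$ by an $\h_{1-\lambda}^m$-type norm of $h$; then the product is estimated by the Moser inequality \eqref{Morse} with an appropriate split of the weight exponent. The only mildly delicate step is tracking the weight bookkeeping, particularly ensuring $l_1+l_2=l$ is compatible with the $\lambda$ shift coming from the Hardy estimate; this is straightforward but must be done carefully, and it is the main place where an error could creep in.
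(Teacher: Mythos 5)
Your outline for (i), (ii), (iv) and the $L^\infty$/pointwise bounds in (iii) matches the paper's proof (decay at infinity plus Cauchy--Schwarz for the trace inequality; the ``put the factor with at most $m-2$ derivatives in $L^\infty$ via $H^2(\Omega)\hookrightarrow L^\infty$'' dichotomy for \eqref{Morse}; combining the Hardy bounds with \eqref{Morse} for \eqref{normal0}, \eqref{normal1}, \eqref{normal3}). There is, however, one genuine gap: your proposed proof of the weighted $L^2$ Hardy inequality, i.e.\ the first estimate in \eqref{normal}, does not work. Applying Cauchy--Schwarz pointwise with the split $f=\ly^{1-\lambda}f\cdot\ly^{\lambda-1}$ gives
\begin{align*}
\big|(\p_y^{-1}f)(y)\big|^2~\leq~\Big(\int_0^y(1+z)^{2\lambda-2}dz\Big)\,\big\|\ly^{1-\lambda}f\big\|_{L^2_y(\bR_+)}^2~\leq~\frac{(1+y)^{2\lambda-1}}{2\lambda-1}\,\big\|\ly^{1-\lambda}f\big\|_{L^2_y(\bR_+)}^2,
\end{align*}
so that $\ly^{-2\lambda}|(\p_y^{-1}f)(y)|^2\lesssim (1+y)^{-1}\|\ly^{1-\lambda}f\|_{L^2_y}^2$, and the subsequent integration in $y$ over $\bR_+$ diverges logarithmically. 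No other splitting of the weight in the Cauchy--Schwarz step repairs this: it is the classical fact that Hardy's inequality is not reachable by a naive pointwise Cauchy--Schwarz argument, which always loses a logarithm at the endpoint.

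The fix is the integration-by-parts argument that you yourself invoke, but only for the special case $\lambda=1$; it works verbatim for every $\lambda>\frac12$ and is what the paper does. Namely, write
\begin{align*}
\big\|\ly^{-\lambda}\p_y^{-1}f\big\|_{L^2_y(\bR_+)}^2~=~\frac{1}{1-2\lambda}\int_0^{+\infty}\big[(\p_y^{-1}f)(y)\big]^2\,d(1+y)^{1-2\lambda}~=~\frac{2}{2\lambda-1}\int_0^{+\infty}(1+y)^{1-2\lambda}f(y)\,(\p_y^{-1}f)(y)\,dy,
\end{align*}
where the boundary terms vanish because $(\p_y^{-1}f)(0)=0$ and $2\lambda-1>0$; then a single Cauchy--Schwarz on the right-hand side, splitting $(1+y)^{1-2\lambda}=(1+y)^{-\lambda}\cdot(1+y)^{1-\lambda}$, lets you absorb $\|\ly^{-\lambda}\p_y^{-1}f\|_{L^2_y}$ into the left and yields exactly the constant $\tfrac{2}{2\lambda-1}$. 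With this replacement your proof of (iii), and hence of the composite estimates \eqref{normal0}--\eqref{normal1}, goes through.
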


To overcome the technical difficulty originated from the boundary terms at $\{y=+\infty\}$, we introduce an auxiliary function $\phi(y)\in C^\infty(\bR_+)$ satisfying that
\[\phi(y)=\begin{cases}
	y,\quad y\geq 2R_0,\\
	0,\quad 0\leq y\leq R_0
\end{cases}\]
for some constant $R_0>0$. Then, set the new unknowns:
\begin{align}\label{new_quan}
	&u(t,x,y)~:=~u_1(t,x,y)-U(t,x)\phi'(y),\quad v(t,x,y)~:=~u_2(t,x,y)+U_x(t,x)\phi(y),\nonumber\\
	&h(t,x,y)~:=~h_1(t,x,y)-H(t,x)\phi'(y),\quad g(t,x,y)~:=~h_2(t,x,y)+H_x(t,x)\phi(y).
	\end{align}
Choose the above construction for $(u,v,h,g)$ to ensure the divergence free conditions and homogenous boundary conditions, i.e.,
\[\begin{split}
&\p_x u+\p_y v=0,\quad \p_x h+\p_y g=0,\\
&(u,v,\p_yh,g)|_{y=0}=\textbf 0,\quad \lim_{y\rightarrow+\infty}(u,h)=\textbf 0,	
\end{split}\]
which implies that $v=-\p_y^{-1}\p_xu$ and $g=-\p_y^{-1}\p_xh.$
And it is easy to get that
\begin{align*}
	(u, h)(t,x,y)=\big(u_1(t,x,y)-U(t,x), h_1(t,x,y)-H(t,x)\big)+\big(U(t,x)(1-\phi'(y)),H(t,x)(1-\phi'(y))\big),
\end{align*}
which implies that by the construction of $\phi(y)$,
\begin{align}\label{est_axu}
\|(u, h)(t)\|_{\h_l^m}-CM_0\leq\|(u_1-U,h_1-H)(t)\|_{\h_l^m}\leq&\|(u,h)(t)\|_{\h_l^m}+CM_0.
\end{align}
By using the new unknowns $(u,v,h,g)$ given by \eqref{new_quan}, we can reformulate the original problem \eqref{bl_mhd} to the following:
\begin{align}\label{bl_main}
	\begin{cases}	
\partial_tu+\big[(u+U\phi')\partial_x+(v-U_x\phi)\partial_y\big]u-\big[(h+H\phi')\partial_x+(g-H_x\phi)\partial_y\big]h-\mu\partial^2_yu\\
\qquad\qquad+U_x\phi'u+U\phi''v-H_x\phi'h-H\phi''g=r_1,\\
		\partial_th+\big[(u+U\phi')\partial_x+(v-U_x\phi)\partial_y\big]h-\big[(h+H\phi')\partial_x+(g-H_x\phi)\partial_y\big]u-\kappa\partial_y^2h\\
		\qquad\qquad+H_x\phi'u+H\phi''v-U_x\phi'h-U\phi''g=r_2,\\
		\partial_xu+\partial_yv=0,\quad \partial_xh+\partial_yg=0,\\
		(u,v,\partial_yh,g)|_{y=0}=\textbf 0,\\
		(u,h)|_{t=0}=\big(u_{10}(x,y)-U(0,x)\phi'(y),h_{10}(x,y)-H(0,x)\phi'(y)\big)\triangleq(u_0,h_0)(x,y),
		\end{cases}
\end{align}
where   
\begin{align}\label{def_rhs}
	\begin{cases}
		r_1=&U_t[(\phi')^2-\phi\phi''-\phi']+P_x\big[(\phi')^2-\phi\phi''-1\big]+\mu U\phi^{(3)},\\
		r_2=&H_t[(\phi')^2+\phi\phi''-\phi']+\kappa H\phi^{(3)}.
	\end{cases}
\end{align}
Note that we have used the divergence free conditions in obtaining the equations of $(u,h)$ in \eqref{bl_main}, and the relations \eqref{Brou}  in the calculation of \eqref{def_rhs}. It is worth noting that by substituting \eqref{new_quan} into the second equation of \eqref{bl_mhd} directly, there is another equivalent form for the equation of $h$, which may be convenient for use in some situations:
\begin{align}\label{eq_h}
	\p_t h+\p_y\big[(v-U_x\phi)(h+H\phi')-(u+U\phi')(g-H_x\phi)\big]-\kappa\p_y^2h=-H_t\phi'+\kappa H\phi^{(3)}.
\end{align}

By the choice of $\phi(y)$, it is easy to get that 
\begin{align}\label{property_r}
	r_1(t,x,y),~r_2(t,x,y)~\equiv~0,\qquad &y\geq2R_0,\nonumber\\
	r_1(t,x,y)~\equiv~-P_x(t,x),\quad r_2(t,x,y)~\equiv~0,\qquad &0\leq y\leq R_0,
\end{align}
and then for any $t\in[0,T],\lambda\geq0$ and $|\alpha|\leq m$, by virtue of \eqref{ass_outflow},
\begin{equation}
	\label{est_rhd}
	\|\ly^\lambda D^\alpha r_1(t)\|_{L^2(\Omega)},~\|\ly^\lambda D^\alpha r_2(t)\|_{L^2(\Omega)}\leq C\sum_{|\beta|\leq|\alpha|+1}\|\p_\tau^{\beta}(U,H,P_x)(t)\|_{L^{2}(\bT_x)}\leq CM_0.
\end{equation}
Furthermore, similar to \eqref{est_axu} we have that for the initial data:
\begin{align}\label{est_ini}
\|(u_0,h_0)\|_{H_l^{2m}(\Omega)}-CM_0\leq&\big\|\big(u_{10}(x,y)-U(0,x),h_{10}-H(0,x)\big)\big\|_{H_l^{2m}(\Omega)}\leq \|(u_0,h_0)\|_{H_l^{2m}(\Omega)}+CM_0.
\end{align}

Finally, from the transformation \eqref{new_quan}, and the relations \eqref{est_axu} and \eqref{est_ini}, it is easy to know that Theorem \ref{Th1} is a corollary of the following result.
\begin{thm}\label{thm_main}
Let $m\geq5$ be a integer, $l\geq0$ a real number, and $(U,H,P_x)(t,x)$ satisfies  the hypotheses given in Theorem \ref{Th1}. 
In addition, assume that for the problem \eqref{bl_main}, the initial data 
 	\(\big(u_{0}(x,y),h_{0}(x,y)\big)\in H^{3m+2}_l(\Omega),\) 
 and the compatibility conditions up to $m$-th order. Moreover, there exists a sufficiently small constant $\delta_0>0$, such that 
 \begin{align}\label{ass_bound-modify}
 \big|\ly^{l+1}\p_y^i(u_{0}, h_{0})(x,y)\big|\leq(2\delta_0)^{-1},	\quad h_{0}(x,y)+H(0,x)\phi'(y)\geq2\delta_0,\quad\mbox{for}\quad i=1,2,~ (x,y)\in\Omega.
 \end{align}
Then, there exist a time $0<T_*\leq T$ and a unique solution $(u,v,h,g)$ to the initial boundary value problem (\ref{bl_main}), such that 
\begin{align}\label{result_1}
	(u, h)\in\bigcap_{i=0}^mW^{i,\infty}\Big(0,T_*;H_l^{m-i}(\Omega)\Big),
\end{align}
and
\begin{align}\label{result_2}
(v,g)&\in\bigcap_{i=0}^{m-1}W^{i,\infty}\Big(0,T_*;H_{-1}^{m-1-i}(\Omega)\Big),\quad	(\p_yv,\p_yg)\in\bigcap_{i=0}^{m-1}W^{i,\infty}\big(0,T_*;H_l^{m-1-i}(\Omega)\big).
\end{align}
Moreover, if $l>\frac{1}{2},$
\begin{align}\label{result_3}
&(v, g)\in\bigcap_{i=0}^{m-1}W^{i,\infty}\Big(0,T_*; L^\infty\big(\bR_{y,+};H^{m-1-i}(\bT_x)\big)\Big).
\end{align}
\end{thm}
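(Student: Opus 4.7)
The strategy has three main stages: construction of approximate solutions, uniform a priori estimates in $\h_l^m$, and passage to the limit together with uniqueness. Because the outer-flow regularity enters only through the smooth source terms $r_1,r_2$ controlled by \eqref{est_rhd}, and \eqref{est_ini} already transfers the initial regularity to $(u_0,h_0)$, the main effort reduces to an energy bound for $(u,h)$ in the weighted spaces $\h_l^m$.

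For the approximation I would regularize \eqref{bl_main} by adding $-\varepsilon\p_x^2$ to the two evolution equations and combine this with a Picard-type iteration in which the transport coefficients are frozen. For each $\varepsilon>0$ the linearized problem is a standard parabolic system and yields a smooth local solution, while the pointwise positivity $h_1 = h + H\phi' \geq \delta_0$ propagates by the maximum principle applied to \eqref{eq_h}, using that $g-H_x\phi$ vanishes at $y=0$ and decays at infinity. This preserves hypothesis \eqref{ass_bound-modify} on a short time interval uniformly in $\varepsilon$.

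The core step is the uniform a priori estimate. Applying $D^\alpha$ for $|\alpha|\leq m$ to \eqref{bl_main} and testing in $L^2_{l+k}$ against $D^\alpha u$ and $D^\alpha h$, one controls the vertical derivatives via $\mu\p_y^2 u$ and $\kappa\p_y^2 h$, and treats generic commutators by the Moser-type and $\p_y^{-1}$ bounds of Lemma \ref{lemma_ineq}. What remains are the genuinely dangerous top-order contributions from $(v-U_x\phi)\p_y u$, $(g-H_x\phi)\p_y h$ and their cross-field counterparts in the equation for $h$; since $v=-\p_y^{-1}\p_x u$ and $g=-\p_y^{-1}\p_x h$, each of them costs one extra tangential derivative and, with no monotonicity assumption on $u_1$, the classical Prandtl cancellation is unavailable. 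The key new idea is to exploit the lower bound $h_1\geq\delta_0$ to introduce a good unknown — concretely, a combination of $D^\alpha u$ and $D^\alpha h$ weighted by $\p_y u/h_1$ and $\p_y h/h_1$ — whose evolution equation no longer contains the problematic top-order terms, thanks to the fact that the magnetic field contribution enters the momentum and the induction equations with opposite signs. Since $h_1\geq\delta_0$ makes the correspondence between $(D^\alpha u,D^\alpha h)$ and the good unknown invertible with bounded norm, this yields an inequality of the form
\[
\frac{d}{dt}\|(u,h)(t)\|_{\h_l^m}^2\leq \cp\bigl(\|(u,h)(t)\|_{\h_l^m}\bigr)
\]
on some interval $[0,T_*]$ with $T_*\leq T$.

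Given this estimate, the Aubin--Lions compactness lemma extracts a convergent subsequence from the $\varepsilon$-family and produces a solution with the regularity \eqref{result_1}; the bounds \eqref{result_2}, together with \eqref{result_3} when $l>\tfrac{1}{2}$, then follow by applying \eqref{normal}--\eqref{normal3} and \eqref{normal2} to $v=-\p_y^{-1}\p_x u$ and $g=-\p_y^{-1}\p_x h$. Uniqueness is obtained by repeating the same cancellation at the $L^2$ level on the difference of two solutions. \emph{The principal obstacle} is pinning down the precise algebraic form of the good unknown and verifying that, after substitution and the commutator expansions, every remainder is either dissipative or a nonlinear polynomial in $\|(u,h)(t)\|_{\h_l^m}$; once this cancellation mechanism is in place, the approximation, passage to the limit, and uniqueness arguments are essentially routine modulo careful bookkeeping of the weights in $\h_l^m$.
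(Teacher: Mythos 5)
Your proposal follows essentially the same route as the paper: tangential parabolic regularization by $-\varepsilon\p_x^2$, weighted $\h_l^m$ energy estimates in which the loss of one $x$-derivative through $v=-\p_y^{-1}\p_xu$, $g=-\p_y^{-1}\p_xh$ is removed by a good unknown exploiting $h+H\phi'\geq\delta_0$, Lions--Aubin compactness, and uniqueness by the same substitution at the $L^2$ level for the difference of two solutions. One correction on the point you yourself flag as the principal obstacle: the good unknown cannot be a pointwise combination of $D^\alpha u$ and $D^\alpha h$ weighted by $\p_yu/h_1$ and $\p_yh/h_1$ --- taken literally that fails, because the dangerous term is $(\p_yh+H\phi'')\p_\tau^\beta g=-(\p_yh+H\phi'')\,\p_y^{-1}\p_\tau^{\beta+e_2}h$, which carries one more $x$-derivative than $\p_\tau^\beta h$. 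The correct object subtracts the $y$-\emph{primitive}: $u_\beta=\p_\tau^\beta u-\eta_1\p_\tau^\beta\psi$, $h_\beta=\p_\tau^\beta h-\eta_2\p_\tau^\beta\psi$ with $\psi=\p_y^{-1}h$ the magnetic stream function and $\eta_1=(\p_yu+U\phi'')/(h+H\phi')$, $\eta_2=(\p_yh+H\phi'')/(h+H\phi')$; then $\p_x$ acting on $\eta_i\p_\tau^\beta\psi$ regenerates exactly $-\eta_i\p_\tau^\beta g$ inside the convection term $(h+H\phi')\p_x(\cdot)$, and the evolution of $\p_\tau^\beta\psi$ closes because the induction equation can be integrated once in $y$. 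Two further bookkeeping points from the paper's argument: the substitution is needed only for the purely tangential top-order derivatives $|\beta|=m$ (when $|\alpha|=m$ with $k\geq1$ the standard weighted estimate already closes, since a tangential derivative can be traded for a normal one through the equation), and in the regularized system one must add an $O(\varepsilon)$ corrector to the source terms so that the unmodified initial data continue to satisfy the compatibility conditions of the $\varepsilon$-problem up to order $m$; the paper also propagates $h+H\phi'\geq\delta_0$ simply by integrating $\p_th$ in time rather than by a maximum principle.
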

Therefore, our main task is to show the above Theorem \ref{thm_main}, and its proof will be given in the following two sections.


\section{A priori estimates}

In this section, we will establish a priori estimates for the nonlinear problem (\ref{bl_main}).
\begin{prop}\label{prop_priori}[\textit{Weighted estimates for $D^m(u,h)$}]\\
Let $m\geq5$ be a integer, $l\geq0$ be a real number, and the hypotheses for $(U,H,P_x)(t,x)$ given in Theorem \ref{Th1}  hold. Assume that $(u,v,h,g)$ is a classical solution to the problem \eqref{bl_main} in $[0,T],$ satisfying that 
$(u,h)\in L^\infty\big(0,T; \h_l^m\big),~ (\p_yu,\p_yh)\in L^2\big(0,T; \h_l^m\big),$ 
and for sufficiently small $\delta_0$:
\begin{equation}\label{ass_h}
	h(t,x,y)+H(t,x)\phi'(y)\geq\delta_0,\quad\ly^{l+1}\p_y^i(u, h)(t,x,y)\leq \delta_0^{-1},\quad i=1,2,~(t,x,y)\in [0,T]\times\Omega.
\end{equation}
 Then, it holds that for small time, 
	\begin{align}\label{est_priori}
	\sup_{0\leq s\leq t}\|(u, h)(s)\|_{\h_l^m}~\leq~&\delta_0^{-4}\Big(\cp\big(M_0+\|(u_0,h_0)\|_{H_l^{2m}(\Omega)}\big)+CM_0^6t\Big)^{\frac{1}{2}}\nonumber\\
	&\cdot\Big\{1-C\delta_0^{-24}\Big(\cp\big(M_0+\|(u_0,h_0)\|_{H_l^{2m}(\Omega)}\big)+CM_0^6t\Big)^2t\Big\}^{-\frac{1}{4}}.
\end{align}
Also, we have that for $i=1,2,$
\begin{align}\label{upbound_uy}
	\|\ly^{l+1}\p_y^i(u, h)(t)\|_{L^\infty(\Omega)}
	\leq~&\|\ly^{l+1}\p_y^i(u_0, h_0)\|_{L^\infty(\Omega)}+C\delta_0^{-4}t \Big(\cp\big(M_0+\|(u_0,h_0)\|_{H_l^{2m}(\Omega)}\big)+CM_0^6t\Big)^{\frac{1}{2}}\nonumber\\
	&\cdot\Big\{1-C\delta_0^{-24}\Big(\cp\big(M_0+\|(u_0,h_0)\|_{H_l^{2m}(\Omega)}\big)+CM_0^6t\Big)^2t\Big\}^{-\frac{1}{4}},
	\end{align}
and 
\begin{align}\label{h_lowbound}
	h(t,x,y)\geq~&h_0(x,y)-C\delta_0^{-4}t \Big(\cp\big(M_0+\|(u_0,h_0)\|_{H_l^{2m}(\Omega)}\big)+CM_0^6t\Big)^{\frac{1}{2}}\nonumber\\
	&\cdot\Big\{1-C\delta_0^{-24}\Big(\cp\big(M_0+\|(u_0,h_0)\|_{H_l^{2m}(\Omega)}\big)+CM_0^6t\Big)^2t\Big\}^{-\frac{1}{4}}.
	\end{align}
\end{prop}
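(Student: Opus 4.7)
Denote $E_m(t):=\|(u,h)(t)\|_{\h_l^m}^2$. For each $\alpha=(\beta,k)$ with $|\alpha|\leq m$, I would apply $D^\alpha$ to the two evolution equations in \eqref{bl_main}, test the $u$-equation against $\ly^{2(l+k)}D^\alpha u$ and the $h$-equation against $\ly^{2(l+k)}D^\alpha h$, integrate over $\Omega$, and sum. Using $\p_xu+\p_yv=0$ and $\p_xh+\p_yg=0$, the top-order parts of the transport terms vanish after integration by parts; the diffusion produces the positive dissipation $\mu\|\ly^{l+k}\p_yD^\alpha u\|_{L^2}^2+\kappa\|\ly^{l+k}\p_yD^\alpha h\|_{L^2}^2$ up to controlled weight-commutator corrections; the source contributions are bounded via \eqref{est_rhd}. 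The majority of the resulting commutator terms can be handled by the Moser-type inequality \eqref{Morse}, the Hardy bounds \eqref{normal}--\eqref{normal3}, Sobolev embedding on $\Omega$ (available since $m\geq 5$), and the pointwise assumption \eqref{ass_h}.

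\textbf{Main obstacle.} The genuinely dangerous contributions occur when $\alpha=(\beta,0)$ is purely tangential with $|\beta|=m$, where the commutators $[\p_\tau^\beta,v\p_y]u$ and $[\p_\tau^\beta,g\p_y]h$ involve $\p_\tau^\beta v=-\p_\tau^\beta\p_y^{-1}\p_xu$ and $\p_\tau^\beta g=-\p_\tau^\beta\p_y^{-1}\p_xh$, which each carry one extra tangential derivative compared with $\p_\tau^\beta(u,h)$; without monotonicity there is no direct way to dissipate this loss in the velocity equation alone. The MHD cancellation I would exploit is that the Lorentz terms $-h\p_xh-g\p_yh$ in the $u$-equation and the symmetric terms $-h\p_xu-g\p_yu$ appearing when $h$ is written in the conservation form \eqref{eq_h} generate top-order contributions of the same order. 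Using the non-degeneracy $h+H\phi'\geq\delta_0$, I would form a modified test pair — a weighted linear combination of $D^\alpha u$ and $D^\alpha h$ with coefficients built from $h+H\phi'$ — so that the offending mixed terms combine into an exact $\p_x$- or $\p_y$-derivative, leaving only boundary contributions (handled by the trace inequality \eqref{trace}) and lower-order remainders. The remainders are then controlled by $\cp(\sqrt{E_m})$ via Lemma \ref{lemma_ineq} and the pointwise bound on $\p_y(u,h)$ in \eqref{ass_h}. This is where the non-vanishing tangential magnetic field plays the stabilizing role that monotonicity plays classically, and I expect this cancellation to be the main technical obstacle of the proof.

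\textbf{Gr\"onwall closure.} Collecting all contributions one arrives at a differential inequality of the form
\[\frac{d}{dt}E_m(t)~\leq~C\delta_0^{-C_1}\cp\big(M_0+\sqrt{E_m(t)}\big)+C\delta_0^{-C_2}E_m(t)^3,\]
where the cubic term encodes the bound on products $(D^{\alpha_1}u)(D^{\alpha_2}\p_y u)(D^\alpha u)$ with one factor placed in $L^\infty$ via Sobolev embedding and the other two in $L^2_{l+k}$. Integrating this inequality and using \eqref{est_ini} for the initial data together with the source bound produces the explicit right-hand side of \eqref{est_priori}; the exponent $-\tfrac14$ in the denominator and $\tfrac12$ in the numerator are the signatures of the scalar ODE $y'\leq C_0+C_1 y^3$ integrated from the initial value $\cp(M_0+\|(u_0,h_0)\|_{H_l^{2m}(\Omega)})+CM_0^6 t$.

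\textbf{Pointwise estimates.} For \eqref{upbound_uy} and \eqref{h_lowbound} I would use the fundamental theorem of calculus,
\[\ly^{l+1}\p_y^i(u,h)(t,x,y)-\ly^{l+1}\p_y^i(u_0,h_0)(x,y)=\int_0^t\ly^{l+1}\p_s\p_y^i(u,h)(s,x,y)\,ds,\]
and analogously $h(t,x,y)-h_0(x,y)=\int_0^t\p_sh(s,x,y)\,ds$. Solving for $\p_tu$ and $\p_th$ from \eqref{bl_main} and applying Sobolev embedding on $\Omega$, the norms $\|\ly^{l+1}\p_s\p_y^i(u,h)(s)\|_{L^\infty(\Omega)}$ and $\|\p_sh(s)\|_{L^\infty(\Omega)}$ are bounded in terms of $\|(u,h)(s)\|_{\h_l^m}$, $M_0$ and $\delta_0^{-1}$ (this is where the hypothesis $m\geq 5$ is comfortably enough). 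Inserting the estimate for $E_m(s)$ from the previous step and taking $\sup_{s\leq t}$ yields the stated pointwise bounds.
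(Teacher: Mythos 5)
Your overall architecture matches the paper's: separate treatment of $|\alpha|\le m$ with $|\beta|\le m-1$ (standard weighted energy estimates, trace inequality for the boundary terms from the diffusion) versus the purely tangential case $|\beta|=m$; a closure via a Riccati-type inequality $Y'\le F+C\delta_0^{-C}Y^3$ whose integration produces exactly the $\{1-\cdots\}^{-1/4}$ signature; and the pointwise bounds \eqref{upbound_uy}, \eqref{h_lowbound} by the fundamental theorem of calculus plus Sobolev embedding. All of that is sound and is how the paper proceeds.

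The gap is in the one step that carries the whole paper: your treatment of the top-order tangential terms. The dangerous contributions in \eqref{eq_xm} are $(\p_yu+U\phi'')\p_\tau^\beta v-(\p_yh+H\phi'')\p_\tau^\beta g$ and their symmetric counterparts, where \emph{both} $\p_\tau^\beta v=-\p_y^{-1}\p_\tau^{\beta+e_2}u$ and $\p_\tau^\beta g=-\p_y^{-1}\p_\tau^{\beta+e_2}h$ carry $m+1$ tangential derivatives. A ``weighted linear combination of $D^\alpha u$ and $D^\alpha h$'' used as a test pair cannot remove this loss: testing those terms against any pointwise combination of $\p_\tau^\beta u$ and $\p_\tau^\beta h$ still leaves integrands containing $(m+1)$-st order tangential derivatives, and they do not assemble into an exact $\p_x$- or $\p_y$-derivative (one can check directly that $\int f\,\p_\tau^\beta v\,\p_\tau^\beta u\,dxdy$ integrates by parts only into other terms of the same top order). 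What actually works is a change of \emph{unknown}, not of test function: one introduces the stream function $\psi=\p_y^{-1}h$, integrates the $h$-equation in $y$ to get \eqref{eq-psi}, whose $m$-th tangential derivative \eqref{psi-m} contains precisely $(h+H\phi')\p_\tau^\beta v$; subtracting $\eta_i\,\times$\eqref{psi-m} with $\eta_1=\frac{\p_yu+U\phi''}{h+H\phi'}$, $\eta_2=\frac{\p_yh+H\phi''}{h+H\phi'}$ eliminates the $\p_\tau^\beta v$ terms, and the $\p_\tau^\beta g=-\p_x\p_\tau^\beta\psi$ terms are then absorbed algebraically into the convection terms $-(h+H\phi')\p_x u_\beta$, $-(h+H\phi')\p_x h_\beta$ written in the new unknowns \eqref{new_qu}. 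One then performs the energy estimate on the system \eqref{pr_hat} for $(u_\beta,h_\beta)$ (Proposition \ref{prop_xm}) and returns to $\p_\tau^\beta(u,h)$ via the equivalence Lemma \ref{lem_equ}, whose constant $M(t)\lesssim\delta_0^{-2}$ is the source of the explicit powers $\delta_0^{-4}$ and $\delta_0^{-24}$ in \eqref{est_priori}. You correctly sensed that $h+H\phi'\ge\delta_0$ must enter as a denominator and that this replaces monotonicity, but as written your cancellation mechanism would not close, and the necessity of passing through the antiderivative $\p_\tau^\beta\psi$ and the \emph{integrated} equation for $h$ is the missing idea.
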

The proof of Proposition \ref{prop_priori}   will be given in the following two subsections. More precisely, we will obtain the weighted estimates for $D^\alpha(u,h)$ for $\alpha=(\beta,k)=(\beta_1,\beta_2,k)$, satisfying $|\alpha|=|\beta|+k\leq m,~|\beta|\leq m-1$, in the first subsection, and the weighted estimates for $\p_\tau^\beta(u,h)$ for $|\beta|=m$ in the second subsection.

\subsection{Weighted $H^m_l-$estimates with normal derivatives}
\indent\newline
 The weighted estimates on $D^\alpha(u,h)$ with $|\alpha|=|\beta|+k\leq m,~|\beta|\leq m-1$ can be obtained by the standard energy method because
 one order  tangential regularity loss is allowed. That is, we have the following estimates:
\begin{prop}\label{prop_estm}[\textit{Weighted estimates for $D^\alpha(u,h)$ with $|\alpha|\leq m,|\beta|\leq m-1$}]\\
Let $m\geq5$ be a integer, $l\geq0$ be a real number, and the hypotheses for $(U,H,P_x)(t,x)$ given in Theorem \ref{Th1}  hold. Assume that $(u,v,h,g)$ is a classical solution to the problem \eqref{bl_main} in $[0,T],$ and satisfies 
$(u, h)\in L^\infty\big(0,T; \h_l^m\big),~ (\p_yu,\p_yh)\in L^2\big(0,T; \h_l^m\big).$ Then, there exists a positive constant $C$, depending on $m, l$ and $\phi$, such that for any small $0<\delta_1<1,$
	\begin{align}\label{est_prop1}
	&\sum_{\tiny\substack{|\alpha|\leq m\\|\beta|\leq m-1}}\Big(\frac{d}{dt}\|D^\alpha(u,h)(t)\|_{L_{l+k}^2(\Omega)}^2+\mu\|D^\alpha\p_yu(t)\|_{L_{l+k}^2(\Omega)}^2+\kappa\|D^\alpha\p_yh(t)\|_{L_{l+k}^2(\Omega)}^2\Big)\nonumber\\
	\leq~&\delta_1C\|(\p_yu,\p_yh)(t)\|_{\h_0^m}^2+C\delta_1^{-1}\|(u, h)(t)\|_{\h_l^m}^2\big(1+\|(u, h)(t)\|_{\h_l^m}^2\big)+\sum_{\tiny\substack{|\alpha|\leq m\\|\beta|\leq m-1}}\|D^\alpha(r_1, r_2)(t)\|_{L^2_{l+k}(\Omega)}^2\nonumber\\
	&+C\sum_{|\beta|\leq m+2}\|\p_\tau^\beta (U,H,P)(t)\|_{L^2(\bT_x)}^2.
\end{align}
\end{prop}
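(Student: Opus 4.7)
\medskip

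\noindent\textbf{Proof plan.} The aim is a standard weighted energy estimate, but the choices of test function, integrations by parts, and the role of the constraint $|\beta|\leq m-1$ must be tracked carefully. Fix a multi-index $\alpha=(\beta,k)$ with $|\alpha|\leq m$ and $|\beta|\leq m-1$. I would apply $D^\alpha$ to each of the two evolution equations in \eqref{bl_main}, test the equation for $u$ against $\ly^{2(l+k)}D^\alpha u$ and the equation for $h$ against $\ly^{2(l+k)}D^\alpha h$, and integrate over $\Omega$. The $\partial_t$ contribution gives $\frac12\frac{d}{dt}\bigl(\|D^\alpha u\|_{L^2_{l+k}}^2+\|D^\alpha h\|_{L^2_{l+k}}^2\bigr)$. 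Integration by parts in $y$ on $-\mu\partial_y^2 D^\alpha u$ and $-\kappa\partial_y^2 D^\alpha h$ produces the two good diffusion terms on the left-hand side, a weight commutator $\int \partial_y\ly^{2(l+k)}\,\partial_y D^\alpha u\cdot D^\alpha u$ absorbable with a $\delta_1$ factor, and a boundary contribution at $y=0$. When $k=0$ these boundary pieces vanish: $D^\alpha u|_{y=0}=0$ comes from $u|_{y=0}=0$, and $\partial_y D^\alpha h|_{y=0}=0$ comes from $\partial_y h|_{y=0}=0$. When $k\geq1$, the boundary contributions are handled by \eqref{trace0} and Cauchy–Schwarz, paying at most a $\delta_1$-fraction of the diffusion.

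Next I would treat the transport terms $[(u+U\phi')\partial_x+(v-U_x\phi)\partial_y]$ acting on $u$ (resp.\ $h$). The top-order pieces, where every derivative falls on the transported variable, reduce after integration by parts in $x$ (no boundary on $\bT$) and in $y$ (using $v|_{y=0}=0$, $\phi(0)=0$, and decay at $y=+\infty$), combined with $\partial_x u+\partial_y v=0$, to quantities involving $U_x\phi'$ and the weight derivative $\partial_y\ly^{2(l+k)}$, which are directly bounded by $C(1+M_0)\|(u,h)\|_{\h^m_l}^2$. The lower-order Leibniz commutators are controlled by the Moser-type product estimate \eqref{Morse}; whenever $v=-\partial_y^{-1}\partial_x u$ or $g=-\partial_y^{-1}\partial_x h$ appear inside the commutator, the anti-derivative inequalities \eqref{normal0}, \eqref{normal1}, \eqref{normal3} convert them back to norms of $u,h$ without loss of weight.

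For the magnetic coupling, the crucial observation is that after summing the two tested equations the pairs $-(h+H\phi')\partial_x D^\alpha h\cdot D^\alpha u$ and $-(h+H\phi')\partial_x D^\alpha u\cdot D^\alpha h$ combine into $-(h+H\phi')\partial_x(D^\alpha u\cdot D^\alpha h)\ly^{2(l+k)}$, which upon integration by parts in $x$ moves the derivative onto the coefficient $h+H\phi'$; an analogous cancellation handles the pair with $-(g-H_x\phi)\partial_y$, using $\partial_x h+\partial_y g=0$, the boundary condition $g|_{y=0}=0$, and the weight commutator. The remaining linear zeroth-order pieces $U_x\phi' u$, $U\phi'' v$, $H_x\phi' h$, $H\phi'' g$ (and their counterparts in the $h$-equation) are bounded by $CM_0\|(u,h)\|_{\h^m_l}^2$ after using \eqref{normal1} to control $v,g$ by $u,h$. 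The source contribution gives exactly $\sum\|D^\alpha(r_1,r_2)\|_{L^2_{l+k}}^2$ via Cauchy–Schwarz, and the linear-in-$(U,H,P)$ residual bounded through \eqref{est_rhd} accounts for the last sum on the right of \eqref{est_prop1}. Summing the inequalities over all $\alpha$ with $|\alpha|\leq m$ and $|\beta|\leq m-1$ produces \eqref{est_prop1}.

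The main obstacle, and the reason the estimate is restricted to $|\beta|\leq m-1$, lies in the top Leibniz piece of $(v-U_x\phi)\partial_y u$ (and of $(g-H_x\phi)\partial_y h$, together with the coupling analogues). When $|\alpha|=m$, $k\geq1$, and the derivative splits as $D^{\alpha'}v\cdot\partial_y D^{\alpha-\alpha'}u$ with $|\alpha'|=m-1$ tangential, the factor $D^{\alpha'}v=-\partial_y^{-1}\partial_\tau^{\alpha'+e_1}u$ is an anti-derivative of an order-$m$ tangential derivative of $u$; pairing it with $\partial_y D^{\alpha-\alpha'}u$ in $L^2_{l+k}$ via \eqref{normal3} costs one factor of $\|(\partial_y u,\partial_y h)\|_{\h^m_0}$. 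It is precisely this term that must be absorbed with a small constant, explaining the appearance of $\delta_1 C\|(\partial_y u,\partial_y h)\|_{\h^m_0}^2$ on the right of \eqref{est_prop1}. The constraint $|\beta|\leq m-1$ ensures that every such top-order split still leaves at least one $\partial_y$ on one factor, so \eqref{normal3} is available and no pure $m$-th tangential derivative of $u$ or $h$ is ever needed in this subsection.
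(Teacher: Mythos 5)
Your overall scheme (apply $D^\alpha$, test with $\ly^{2(l+k)}D^\alpha u$ and $\ly^{2(l+k)}D^\alpha h$, integrate by parts, exploit the symmetric cancellation in the magnetic coupling, and close the commutators with \eqref{Morse} and \eqref{normal1}) matches the paper's. The genuine gap is in the boundary term $\mu\int_{\bT_x}(\p_yD^\alpha u\cdot D^\alpha u)|_{y=0}\,dx$ when $|\alpha|=m$ (which, under $|\beta|\leq m-1$, forces $k\geq1$). You propose to handle all $k\geq1$ cases ``by \eqref{trace0} and Cauchy--Schwarz, paying at most a $\delta_1$-fraction of the diffusion,'' but the trace inequality applied to the factor $\p_yD^\alpha u$ requires $\|\p_y^2D^\alpha u\|_{L^2(\Omega)}$, which is of order $m+2$ and is controlled neither by $\|\p_y(u,h)\|_{\h_0^m}$ (order at most $m+1$) nor by the diffusion term $\|D^\alpha\p_y u\|_{L^2_{l+k}}$; no choice of $\delta_1$ rescues this. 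The paper's resolution, which is the one non-routine idea of this proposition, is to write $\mu\p_yD^\alpha u=\mu\p_y^2D^\gamma u$ with $\gamma=\alpha-E_3$ and substitute the $u$-equation at $y=0$ (where $\phi\equiv0$ and $r_1=-P_x$), so that $\mu\p_yD^\alpha u|_{y=0}=D^\gamma P_x+D^{\gamma+E_1}u+D^\gamma(u\p_xu-h\p_xh)+D^\gamma(v\p_yu-g\p_yh)$; every resulting boundary integral then involves at most $m+1$ derivatives and closes with \eqref{trace}, \eqref{trace0} and \eqref{Morse}. Without this substitution your argument does not close for $|\alpha|=m$.

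A secondary inaccuracy: you attribute the term $\delta_1C\|(\p_yu,\p_yh)(t)\|_{\h_0^m}^2$ in \eqref{est_prop1} to the top Leibniz piece $D^{\alpha'}v\cdot\p_yD^{\alpha-\alpha'}u$ of the convection. In the paper that piece is bounded by $C\|(u,h)(t)\|_{\h_l^m}^2$ with no loss onto a $\p_y$-norm: when $|\alpha|=m$ one has $k\geq1$, so one rewrites $\p_\tau^{\tilde\beta}\p_y^{-1}(\p_xu)\cdot\p_\tau^{\beta-\tilde\beta}\p_y^{k+1}u$ as $\p_\tau^{\tilde\beta-e_i}\p_y^{-1}(\p_\tau^{e_i+e_2}u)\cdot\p_\tau^{\beta-\tilde\beta}\p_y^{k-1}(\p_y^2u)$ and applies \eqref{normal1}, landing on $\|u\|_{\h_0^m}\|u\|_{\h_l^m}$. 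The $\delta_1\|(\p_yu,\p_yh)\|_{\h_0^m}^2$ contribution comes entirely from the boundary integrals described above (together with the trace estimate in the easier case $|\alpha|\leq m-1$), not from the interior convection terms, so your accounting of why the estimate has the stated form is also off.
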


\begin{proof}[\textbf{Proof.}]

Applying the operator $D^{\alpha}=\p_\tau^{\beta}\p_y^{k}$ for $\alpha=(\beta,k)=(\beta_1,\beta_2,k)$, satisfying $|\alpha|=|\beta|+k\leq m,~|\beta|\leq m-1,$ to the first two equations of $(\ref{bl_main})$, it yields that
\begin{align}\label{eq_u}
\begin{cases}
	\p_tD^\alpha u=D^\alpha r_1+\mu \p_y^2D^\alpha u-D^\alpha\Big\{\big[(u+U\phi')\partial_x+(v-U_x\phi)\partial_y\big]u-\big[(h+H\phi')\partial_x+(g-H_x\phi)\partial_y\big]h\\
	\qquad\qquad\qquad+U_x\phi'u+U\phi''v-H_x\phi'h-H\phi''g\Big\},\\
\p_tD^\alpha h=D^\alpha r_2+\kappa \p_y^2D^\alpha h-D^\alpha\Big\{\big[(u+U\phi')\partial_x+(v-U_x\phi)\partial_y\big]h-\big[(h+H\phi')\partial_x+(g-H_x\phi)\partial_y\big]u\\
\qquad\qquad\qquad+H_x\phi'u+H\phi''v-U_x\phi'h-U\phi''g\}.
\end{cases}
\end{align}
Multiplying $(\ref{eq_u})_1$ by $\langle y\rangle^{2l+2k}D^\alpha u$,  $(\ref{eq_u})_2$ by $\langle y\rangle^{2l+2k}D^\alpha h$ respectively, and integrating them over $\Omega$, with respect to the spatial variables $x$ and $y$, 
we obtain that 
\begin{align}\label{est-m0}
\frac{1}{2}\frac{d}{dt}\big\|\ly^{l+k}D^\alpha(u,h)(t)\big\|_{L^2(\Omega)}^2
=&\int_{\Omega}\Big(D^\alpha r_1\cdot\ly^{2l+2k}D^\alpha u+D^\alpha r_2\cdot\ly^{2l+2k}D^\alpha h\Big)dxdy\nonumber\\
&+\mu\int_{\Omega}\big(\p_y^2D^\alpha u\cdot\ly^{2l+2k}D^\alpha u\big)dxdy+\kappa\int_{\Omega}\big(\p_y^2D^\alpha h\cdot\ly^{2l+2k}D^\alpha h\big)dxdy\nonumber\\
&-\int_{\Omega}\Big(I_1\cdot\ly^{2l+2k}D^\alpha u+I_2\cdot\ly^{2l+2k}D^\alpha h\Big)dxdy,
\end{align}
where
\begin{align}\label{def_I}
	\begin{cases}
		I_1=&D^\alpha\Big\{\big[(u+U\phi')\partial_x+(v-U_x\phi)\partial_y\big]u-\big[(h+H\phi')\partial_x+(g-H_x\phi)\partial_y\big]h\\
		&\quad+U_x\phi'u+U\phi''v-H_x\phi'h-H\phi''g\Big\},\\
		I_2=&D^\alpha\Big\{\big[(u+U\phi')\partial_x+(v-U_x\phi)\partial_y\big]h-\big[(h+H\phi')\partial_x+(g-H_x\phi)\partial_y\big]u\\
		&\quad+H_x\phi'u+H\phi''v-U_x\phi'h-U\phi''g\Big\}.
	\end{cases}
\end{align}
First of all, it is easy to get that by virtue of \eqref{est_rhd},
\begin{align}\label{est-remainder}
	&\int_{\Omega}\Big(D^\alpha r_1\cdot\ly^{2l+2k}D^\alpha u+D^\alpha r_2\cdot\ly^{2l+2k}D^\alpha h\Big)dxdy\nonumber\\
	\leq &\frac{1}{2}\|D^\alpha(u, h)(t)\|^2_{L_{l+k}^2(\Omega)}
	+\frac{1}{2}\|D^\alpha (r_1, r_2)(t)\|_{L^2_{l+k}(\Omega)}^2.
\end{align}
Next, we assume that the following two estimates holds, which will be proved later: for any small $0<\delta_1<1,$
\begin{align}
\label{est-duff}
&
\mu\int_{\Omega}\big(\p_y^2D^\alpha u\cdot\langle y\rangle^{2l+2k}D^\alpha u\big) dxdy+\kappa\int_{\Omega}\big(\p_y^2D^\alpha h\cdot\langle y\rangle^{2l+2k}D^\alpha h\big) dxdy\nonumber\\
\leq&
-\frac{\mu}{2}\big\|D^\alpha \p_yu(t)\big\|^2_{L^2_{l+k}(\Omega)}-\frac{\kappa}{2}\big\|D^\alpha \p_yh(t)\big\|^2_{L_{l+k}^2(\Omega)}+\delta_1\big\|(\p_yu,\p_yh)(t)\big\|_{\h_0^m}^2
\nonumber\\
&+C\delta_1^{-1}\|(u,h)(t)\|_{\h_l^m}^2\big(1+\|(u,h)(t)\|_{\h_l^m}^2\big)+C\sum_{|\beta|\leq m-1}\|\p_\tau^\beta P_x(t)\|_{L^2(\bT_x)}^2,
\end{align}
and
\begin{align}\label{est-convect}
	&
	-\int_{\Omega}\Big(I_1\cdot\ly^{2l+2k}D^\alpha u+I_2\cdot\ly^{2l+2k}D^\alpha h\Big)dxdy\nonumber\\
	\leq~&C\big(\sum_{|\beta|\leq m+2}\|\p_\tau^\beta(U,H)(t)\|_{L^2(\bT_x)}+\big\|(u,h)(t)\big\|_{\h_l^m}\big)\big\|(u,h)(t)\big\|_{\h_l^m}^2.
\end{align} 
At the moment, by plugging the above inequalities \eqref{est-remainder}-\eqref{est-convect} into \eqref{est-m0}, and summing over $\alpha$, we obtain that there exists a constant $C_m>0$, depending only on $m,$ such that
\begin{align}\label{est_both}
	&\sum_{\tiny\substack{|\alpha|\leq m\\|\beta|\leq m-1}}\Big(\frac{d}{dt}\big\|D^\alpha(u,h)(t)\big\|_{L^2_{l+k}(\Omega)}^2+\mu\big\|D^\alpha\p_yu(t)\big\|_{L^2_{l+k}(\Omega)}^2+\kappa\big\|D^\alpha\p_yh(t)\big\|_{L^2_{l+k}(\Omega)}^2\Big)\nonumber\\
	\leq~&\delta_1C_m\big\|(\p_yu,\p_yh)(t)\big\|_{\h_0^m}^2+C\delta_1^{-1}\|(u, h)(t)\|_{\h_l^m}^2\big(1+\|(u, h)(t)\|_{\h_l^m}^2\big)+\sum_{\tiny\substack{|\alpha|\leq m\\|\beta|\leq m-1}}\|D^\alpha(r_1, r_2)(t)\|_{L^2_{l+k}(\Omega)}^2\nonumber\\
	&+C\sum_{|\beta|\leq m+2}\|\p_\tau^\beta (U,H,P)(t)\|_{L^2(\bT_x)}^2,
\end{align}
which implies the estimate \eqref{est_prop1} immediately.

Now, it remains to show the estimates \eqref{est-duff} and \eqref{est-convect}
that will be given as follows.
\indent\newline
\textbf{\textit{Proof of \eqref{est-duff}.}}
In this part, we will first handle the term $\mu\int_{\Omega}\big(\p_y^2D^\alpha u\cdot\langle y\rangle^{2l+2k}D^\alpha u\big) dxdy$, and the term $\kappa\int_{\Omega}\big(\p_y^2D^\alpha h\cdot\langle y\rangle^{2l+2k}D^\alpha h\big) dxdy$ can be 
estimated similarly.
By integration by parts, we have
\begin{align}
\label{ex_duff}
\mu\int_{\Omega}\big(\p_y^2D^\alpha u\cdot\langle y\rangle^{2l+2k}D^\alpha u\big)dxdy
=&-\mu\big\|\ly^{l+k}\p_yD^\alpha u(t)\big\|^2_{L^2(\Omega)}+2(l+k)\mu\int_{\Omega}\big(\ly^{2l+2k-1}\p_y D^\alpha u\cdot  D^\alpha u \big)dxdy\nonumber\\
&+\mu\int_{\bT_x}(\p_y D^\alpha u\cdot D^\alpha u)\big|_{y=0} dx.
\end{align}
By Cauchy-Schwarz inequality,
\begin{align}\label{est_duff0}
&2(l+k)\mu\int_{\Omega}\big(\ly^{2l+2k-1}\p_yD^\alpha u\cdot D^\alpha u\big)dxdy\nonumber\\
\leq ~&\frac{\mu}{14}\big\|\ly^{l+k}\p_y D^\alpha u(t)\big\|^2_{L^2(\Omega)}+14\mu(l+k)^2\|\ly^{l+k}D^\alpha u(t)\|^2_{L^2(\Omega)},
\end{align}
which implies that by plugging \eqref{est_duff0} into \eqref{ex_duff},
\begin{align}\label{est_duff1}
&
\mu\int_{\Omega}\big(\p_y^2D^\alpha u\cdot\langle y\rangle^{2l+2k}D^\alpha u\big)dxdy\nonumber\\
\leq&
-\frac{13\mu}{14}\big\|\ly^{l+k}D^\alpha\p_y u(t)\big\|^2_{L^2(\Omega)}+C\|u(t)\|_{\h_l^m}^2+
\mu\int_{\bT_x}(\p_y D^\alpha u\cdot D^\alpha u)\big|_{y=0} dx.
\end{align}
The last term in (\ref{est_duff1}), that is,  the boundary integral $\mu\int_{\bT_x}(\p_yD^\alpha u\cdot D^\alpha u)\big|_{y=0}dx$ is treated in the following
two cases.

{\bf Case 1: $|\alpha|\leq m-1.$} By the inequality \eqref{trace}, we obtain that for any small $0<\delta_1<1,$
\begin{align}
\label{est_duff2}
\mu\Big|\int_{\bT_x}(\p_yD^\alpha u\cdot D^\alpha u)\big|_{y=0}dx\Big|
\leq~ &
\mu\big\|\p_y^2D^\alpha u(t)\big\|_{L^2(\Omega)}\big\| D^\alpha u(t)\big\|_{L^2(\Omega)}+\mu\big\|\p_y D^\alpha u(t)\big\|_{L^2(\Omega)}^2\nonumber\\
\leq~&
\delta_1\big\|\p_y^2D^\alpha u(t)\big\|^2_{L^{2}(\Omega)}+\frac{\mu^2}{4\delta_1}\| D^\alpha u(t)\|^2_{L^{2}(\Omega)}+\mu\big\|\p_y D^\alpha u(t)\big\|_{L^2(\Omega)}^2\nonumber\\
\leq~&
\delta_1\|\p_yu(t)\|_{\h_0^m}+C\delta_1^{-1}\|u(t)\|^2_{\h_0^{m}}.
\end{align}

{\bf Case 2: $|\alpha|=|\beta|+k=m$.} 
It implies that $k\geq1$ from $|\beta|\leq m-1.$ Then, denote by $\gamma\triangleq\alpha-E_3=(\beta,k-1)$ with $|\gamma|=|\beta|+k-1= m-1$, the first equation in $(\ref{bl_main})$ reads
\begin{align*}
\mu\p_yD^\alpha u=\mu \p^2_yD^\gamma u
=&D^\gamma\Big\{\p_t u+\big[(u+U\phi')\partial_x+(v-U_x\phi)\partial_y\big]u-\big[(h+H\phi')\partial_x+(g-H_x\phi)\partial_y\big]h\nonumber\\
&\qquad+U_x\phi'u+U\phi''v-H_x\phi'h-H\phi''g-r_1\Big\}.
\end{align*}
Then, combining \eqref{property_r} with the fact $\phi\equiv0$ for $y\leq R_0$, it yields that at $y=0,$
\begin{align}\label{bc_um}
	\mu\p_yD^\alpha u=~&D^\gamma\Big[\p_t u+\big(u\partial_x+v\partial_y\big)u-\big(h\partial_x+g\partial_y\big)h+P_x\Big]\nonumber\\
	=~&D^\gamma P_x+D^{\gamma+E_1}u+D^\gamma\big(u\p_xu-h\p_xh\Big)+D^\gamma\big(v\p_yu-g\p_yh\Big).
\end{align}
It is easy to get that by \eqref{trace0},
\begin{align}\label{est_bc1}
	\Big|\int_{\bT_x}\big(D^\gamma P_x\cdot D^\alpha u\big)\big|_{y=0}dx\Big|\leq~&\big\|D^\gamma P_x(t)\big\|_{L^2(\bT_x)}\big\|D^\alpha u(t)|_{y=0}\big\|_{L^2(\bT_x)} \nonumber\\
	\leq~&\sqrt{2}\big\|D^\gamma P_x(t)\big\|_{L^2(\bT_x)}\big\|D^\alpha u(t)\big\|_{L^2(\Omega)}^{\frac{1}{2}}\big\|D^\alpha \p_yu(t)\big\|_{L^2(\Omega)}^{\frac{1}{2}}\nonumber\\
	\leq~&\frac{\mu}{14}\|D^\alpha\p_yu(t)\|_{L^2(\Omega)}^2+C\|u(t)\|_{\h_0^m}^2+C\big\|D^\gamma P_x(t)\big\|_{L^2(\bT_x)}^2,
\end{align}
provided $|\alpha|=m.$ Also, by \eqref{trace} and $|\gamma+E_1|=m,$
\begin{align}\label{est_bc2}
	\Big|\int_{\bT_x}\big(D^{\gamma+E_1}u\cdot D^\alpha u\big)\big|_{y=0}dx\Big|\leq~&\big\|D^{\gamma+E_1}\p_yu(t)\big\|_{L^2(\Omega)}\big\|D^{\alpha}u(t)\big\|_{L^2(\Omega)}+\big\|D^{\gamma+E_1}u(t)\big\|_{L^2(\Omega)}\big\|D^{\alpha}\p_yu(t)\big\|_{L^2(\Omega)}\nonumber\\
	\leq~&\frac{\delta_1}{3}\|D^{\gamma+E_1}\p_yu(t)\|_{L^2(\Omega)}^2+\frac{\mu}{14}\|D^\alpha\p_yu(t)\|_{L^2(\Omega)}^2+C\delta_1^{-1}\|u(t)\|_{\h_0^m}^2.
\end{align}

Hence, as we know $D^\gamma(u\p_xu\big)=\sum_{\tilde\gamma\leq\gamma}\left(\begin{array}{ccc}\gamma \\ \tilde\gamma \end{array}\right)\Big(D^{\tilde\gamma}u\cdot D^{\gamma-\tilde\gamma+E_2}u\Big),$ it follows that
\begin{align}\label{est_bc3-0}
	\Big|\int_{\bT_x}\big(D^{\gamma}(u\p_xu)\cdot D^\alpha u\big)\big|_{y=0}dx\Big|\leq~&C\sum_{\tilde\gamma\leq\gamma}\Big\{\big\|\p_y\big(D^{\tilde\gamma}u\cdot D^{\gamma-\tilde\gamma+E_2}u\big)\big\|_{L^2(\Omega)}\big\|D^\alpha u\big\|_{L^2(\Omega)}\nonumber\\
	&\qquad\quad+\big\|D^{\tilde\gamma}u\cdot D^{\gamma-\tilde\gamma+E_2}u\big\|_{L^2(\Omega)}\big\|D^\alpha \p_yu\big\|_{L^2(\Omega)}\Big\}.
\end{align}
Then, by using \eqref{Morse} and note that $|\gamma|=m-1\geq3$, we have
\begin{align*}
	\big\|\p_y\big(D^{\tilde\gamma}u\cdot D^{\gamma-\tilde\gamma+E_2}u\big)\big\|_{L^2(\Omega)}\leq~&\big\| D^{\tilde\gamma}\p_yu\cdot D^{\gamma-\tilde\gamma+E_2}u\big\|_{L^2(\Omega)}+\big\|D^{\tilde\gamma}u\cdot D^{\gamma-\tilde\gamma+E_2}\p_yu\big\|_{L^2(\Omega)}\\
	\leq~&C\|\p_yu(t)\|_{\h_0^{m-1}}\|\p_xu(t)\|_{\h_0^{m-1}}+C\|u(t)\|_{\h_0^{m-1}}\|\p_{xy}^2u(t)\|_{\h_0^{m-1}}\\
	\leq~&C\|u(t)\|_{\h_0^m}\|\p_yu(t)\|_{\h_0^m}+C\|u(t)\|_{\h_0^m}^2,
\end{align*}
and
\begin{align*}
	\big\|D^{\tilde\gamma}u\cdot D^{\gamma-\tilde\gamma+E_2}u\big\|_{L^2(\Omega)}
	\leq~&C\|u(t)\|_{\h_0^{m}}\|u(t)\|_{\h_0^{m}}
	\leq~C\|u(t)\|_{\h_0^m}^2.
\end{align*}
Substituting the above two inequalities into \eqref{est_bc3-0} gives
\begin{align}\label{est_bc3}
	&\Big|\int_{\bT_x}\big(D^{\gamma}(u\p_xu)\cdot D^\alpha u\big)\big|_{y=0}dx\Big|\nonumber\\
	\leq~&C\sum_{\tilde\gamma\leq\gamma}\Big(\big(\|u(t)\|_{\h_0^m}\|\p_yu(t)\|_{\h_0^m}+\|u(t)\|_{\h_0^m}^2\big)\big\|D^\alpha u\big\|_{L^2(\Omega)}+\|u(t)\|_{\h_0^m}^2\big\|\p_yD^\alpha u\big\|_{L^2(\Omega)}\Big)\nonumber\\
	\leq~&\frac{\delta_1}{3}\|\p_yu(t)\|_{\h_0^m}^2+\frac{\mu}{14}\|D^\alpha\p_yu(t)\|_{L^2(\Omega)}^2+C\delta_1^{-1}\|u(t)\|_{\h_0^m}^4+C\|u(t)\|_{\h_0^m}^2.
\end{align}
Similarly, we have
\begin{align}\label{est_bc4}
	&\Big|\int_{\bT_x}\big(D^{\gamma}(h\p_x h)\cdot D^\alpha u\big)\big|_{y=0}dx\Big|\nonumber\\
	\leq~&C\big(\|h(t)\|_{\h_0^m}\|\p_yh(t)\|_{\h_0^m}+C\|h(t)\|_{\h_0^m}^2\big)\big\|D^\alpha u\big\|_{L^2(\Omega)}+C\|h(t)\|_{\h_0^m}^2\big\|\p_yD^\alpha u\big\|_{L^2(\Omega)}\nonumber\\
	\leq~&\frac{\delta_1}{3}\|(\p_yu,\p_yh)(t)\|_{\h_0^m}^2+\frac{\mu}{14}\|D^\alpha\p_yu(t)\|_{L^2(\Omega)}^2+C\delta_1^{-1}\|(u,h)(t)\|_{\h_0^m}^4+C\|(u,h)(t)\|_{\h_0^m}^2.
\end{align}

We now turn to control the integral $\Big|\int_{\bT_x}\big(D^{\gamma}(v\p_yu)\cdot D^\alpha u\big)\big|_{y=0}dx\Big|$. Recall that $D^\gamma=\p_\tau^\beta\p_y^{k-1}$,   by the boundary condition $v|_{y=0}=0$ and divergence free condition $u_x+v_y=0,$ we obtain that on $\{y=0\},$
\[\begin{split}
	D^\gamma(v\p_yu)=~&\p_\tau^\beta\Big(v\p_y^ku+\sum_{i=1}^{k-1}\left(\begin{array}{ccc}
		 k-1 \\ i
	\end{array}\right)\p_y^iv\cdot\p_y^{k-i}u\Big)=~\sum_{j=0}^{k-2}\left(\begin{array}{ccc}
		 k-1 \\ j+1 \end{array}\right)\p_\tau^\beta\Big[-\p_y^j\p_xu\cdot\p_y^{k-j-1}u\Big]\\
	=~&
	-\sum_{\tiny\substack{\tilde\beta\leq\beta \\ 0\leq j\leq k-2}}\left(\begin{array}{ccc} k-1 \\ j+1	\end{array}\right)\left(\begin{array}{ccc}\beta \\ \tilde \beta \end{array}\right)\Big(\p_\tau^{\tilde\beta+e_2}\p_y^j u\cdot\p_\tau^{\beta-\tilde\beta}\p_y^{k-j-1}u\Big),
\end{split}\]
where we denote $\left(\begin{array}{ccc}j \\ i	\end{array}\right)=0$ for $i>j$. Note that the right-hand side of the above equality vanishes when $k=1$, and we only need to consider the case $k\geq2$. Thus, from the above expression for $D^\gamma(v\p_yu)$ at $y=0,$ we obtain that by \eqref{trace},
\begin{align}\label{est_bc5-0}
	\Big|\int_{\bT_x}\big(D^{\gamma}(v\p_yu)\cdot D^\alpha u\big)\big|_{y=0}dx\Big|\leq~C\sum_{\tiny\substack{\tilde\beta\leq\beta \\ 0\leq j\leq k-2}}\Big\{&\big\|\p_y\big(\p_\tau^{\tilde\beta+e_2}\p_y^ju\cdot \p_\tau^{\beta-\tilde\beta}\p_y^{k-j-1}u\big)\big\|_{L^2(\Omega)}\big\|D^\alpha u\big\|_{L^2(\Omega)}\nonumber\\
	&+\big\|\p_\tau^{\tilde\beta+e_2}\p_y^ju\cdot \p_\tau^{\beta-\tilde\beta}\p_y^{k-j-1}u\big\|_{L^2(\Omega)}\big\|D^\alpha \p_yu\big\|_{L^2(\Omega)}\Big\}.
\end{align} 
 As $0\leq j\leq k-2$, it follows that by \eqref{Morse}, 
\begin{align*}
	\big\|\p_y\big(\p_\tau^{\tilde\beta+e_2}\p_y^ju\cdot \p_\tau^{\beta-\tilde\beta}\p_y^{k-j-1}u\big)\big\|_{L^2(\Omega)}\leq~&\big\|\p_\tau^{\tilde\beta+e_2}\p_y^{j+1}u\cdot \p_\tau^{\beta-\tilde\beta}\p_y^{k-j-1}u\big\|_{L^2(\Omega)}+\big\|\p_\tau^{\tilde\beta+e_2}\p_y^{j}u\cdot \p_\tau^{\beta-\tilde\beta}\p_y^{k-j}u\big\|_{L^2(\Omega)}\\
	\leq~&C\|\p_yu(t)\|_{\h_0^{m-1}}\|\p_yu(t)\|_{\h_0^{m-1}}+C\|\p_xu(t)\|_{\h_0^{m-1}}\|\p_{y}u(t)\|_{\h_0^{m-1}}\\
	\leq~&C\|u(t)\|_{\h_0^m}^2,
\end{align*}
and
\begin{align*}
	\big\|\p_\tau^{\tilde\beta+e_2}\p_y^ju\cdot \p_\tau^{\beta-\tilde\beta}\p_y^{k-j-1}u\big\|_{L^2(\Omega)}
	\leq~&C\|u(t)\|_{\h_0^{m}}\|u(t)\|_{\h_0^{m}}
	\leq~C\|u(t)\|_{\h_0^m}^2,
\end{align*}
provided that $|\beta|+k=|\alpha|=m.$
Substituting the above two inequalities into \eqref{est_bc5-0} gives
\begin{align}\label{est_bc5}
	\Big|\int_{\bT_x}\big(D^{\gamma}(v\p_yu)\cdot D^\alpha u\big)\big|_{y=0}dx\Big|\leq&C\sum_{\tiny\substack{\tilde\beta\leq\beta \\ 0\leq j\leq k-2}}\Big\{\|u(t)\|_{\h_0^m}^2\big\|D^\alpha u\big\|_{L^2(\Omega)}+\|u(t)\|_{\h_0^m}^2\big\|\p_yD^\alpha u\big\|_{L^2(\Omega)}\Big\}\nonumber\\
	\leq~&
	\frac{\mu}{14}\|D^\alpha\p_yu(t)\|_{L^2(\Omega)}^2+C\|u(t)\|_{\h_0^m}^4+C\|u(t)\|_{\h_0^m}^2.
\end{align}
Similarly, we can obtain
\begin{align}\label{est_bc6}
	\Big|\int_{\bT_x}\big(D^{\gamma}(g\p_yh)\cdot D^\alpha u\big)\big|_{y=0}dx\Big|\leq&C\|h(t)\|_{\h_0^m}^2\big\|D^\alpha u\big\|_{L^2(\Omega)}+C\|h(t)\|_{\h_0^m}^2\big\|\p_yD^\alpha u\big\|_{L^2(\Omega)}\nonumber\\
	\leq~&
	\frac{\mu}{14}\|D^\alpha\p_yu(t)\|_{L^2(\Omega)}^2+C\|(u, h)(t)\|_{\h_0^m}^4+C\|(u, h)(t)\|_{\h_0^m}^2.
\end{align}
Therefore, from \eqref{bc_um} and combining the estimates \eqref{est_bc1}, \eqref{est_bc2}, \eqref{est_bc3}, \eqref{est_bc4}, \eqref{est_bc5} and \eqref{est_bc6}, we have that when $|\alpha|=|\beta|+k=m$ with $|\beta|\leq m-1$,
\begin{align}\label{est_duff3}
\Big|\int_{\bT_x}(\mu\p_yD^\alpha u\cdot D^\alpha u)\big|_{y=0}dx\Big|
\leq~&\delta_1\big\|(\p_yu,\p_yh)(t)\big\|^2_{\h_0^{m}}+\frac{3\mu}{7}\|D^\alpha\p_yu(t)\|_{L^2(\Omega)}^2+C\delta_1^{-1}\|(u, h)(t)\|_{\h_0^{m}}^4\nonumber\\
&+C\delta_1^{-1}\|(u, h)(t)\|^2_{\h_0^{m}}+C\|D^\gamma P_x(t)\|_{L^2(\bT_x)}^2.
\end{align}

Combining \eqref{est_duff2} with \eqref{est_duff3}, it implies that for $|\alpha|=|\beta|+k\leq m, |\beta|\leq m-1$,
\begin{align}\label{est_duff4}
&\Big|\int_{\bT_x}(\mu\p_yD^\alpha u\cdot D^\alpha u)\big|_{y=0}dx\Big|\nonumber\\
\leq~&\delta_1\big\|(\p_yu,\p_yh)(t)\big\|^2_{\h_0^{m}}+\frac{3\mu}{7}\|D^\alpha\p_yu(t)\|_{L^2(\Omega)}^2+C\delta_1^{-1}\|(u, h)(t)\|_{\h_0^{m}}^2\big(1+\|(u, h)(t)\|_{\h_0^{m}}^2\big)
\nonumber\\
&+C\sum_{|\beta|\leq m-1}\|\p_\tau^\beta P_x(t)\|_{L^2(\bT_x)}^2.
\end{align}
Then, plugging the above estimate \eqref{est_duff4} into \eqref{est_duff1} we have
\begin{align}\label{est_duff5}
&\mu\int_{\Omega}\big(\p_y^2D^\alpha u\cdot\langle y\rangle^{2l+2k}D^\alpha u\big)dxdy\nonumber\\
\leq&-\frac{\mu}{2}\big\|D^\alpha \p_yu(t)\big\|^2_{L_{l+k}^2(\Omega)}+\delta_1\big\|(\p_yu,\p_yh)(t)\big\|^2_{\h_0^{m}}+C\delta_1^{-1}\|(u, h)(t)\|_{\h_0^{m}}^2\big(1+\|(u, h)(t)\|_{\h_0^{m}}^2\big)\nonumber\\
&+C\sum_{|\beta|\leq m-1}\|\p_\tau^\beta P_x(t)\|_{L^2(\bT_x)}^2.
\end{align}

On the other hand, one can get the similar estimation on the term $\kappa\int_{\Omega}\big(\p_y^2D^\alpha h\cdot\langle y\rangle^{2l+2k}D^\alpha h\big) dxdy$: 
\begin{align}\label{est_duff6}
\kappa\int_{\Omega}\big(\p_y^2D^\alpha h\cdot\langle y\rangle^{2l+2k}D^\alpha h\big)dxdy
\leq&-\frac{\kappa}{2}\big\|D^\alpha \p_yh(t)\big\|^2_{L_{l+k}^2(\Omega)}+\delta_1\big\|(\p_yu,\p_yh)(t)\big\|^2_{\h_0^{m}}\nonumber\\
&+C\delta_1^{-1}\|(u, h)(t)\|_{\h_0^{m}}^2\big(1+\|(u, h)(t)\|_{\h_0^{m}}^2\big).
\end{align}
Thus, we prove \eqref{est-duff} by combining \eqref{est_duff5} with \eqref{est_duff6}.

\indent\newline
\textit{\textbf{Proof of \eqref{est-convect}.}}
From the definition \eqref{def_I} of $I_1$ and $I_2$, we have
\begin{align*}
	I_1~=~&\big[(u+U\phi')\p_x+(v-U_x\phi)\p_y\big] D^\alpha u-\big[(h+H\phi')\p_x+(g-H_x\phi)\p_y\big]D^\alpha h\\
	&+\big[D^\alpha, (u+U\phi')\p_x+(v-U_x\phi)\p_y\big]u-\big[D^\alpha, (h+H\phi')\p_x+(g-H_x\phi)\p_y\big]h\\
	&+D^\alpha\big[U_x\phi'u+U\phi''v-H_x\phi'h-H\phi''g\big]\\
	\triangleq~&I_1^1+I_1^2+I_1^3,
\end{align*}
and
\begin{align*}
	I_2~=~&\big[(u+U\phi')\p_x+(v-U_x\phi)\p_y\big] D^\alpha h-\big[(h+H\phi')\p_x+(g-H_x\phi)\p_y\big]D^\alpha u\\
	&+\big[D^\alpha, (u+U\phi')\p_x+(v-U_x\phi)\p_y\big]h-\big[D^\alpha, (h+H\phi')\p_x+(g-H_x\phi)\p_y\big]u\\
	&+D^\alpha\big[H_x\phi'u+H\phi''v-U_x\phi'h-U\phi''g\big]\\
	\triangleq~&I_2^1+I_2^2+I_2^3.
\end{align*}
Thus, we divide the term $-\int_{\Omega}\Big(I_1\cdot\ly^{2l+2k}D^\alpha u+I_2\cdot\ly^{2l+2k}D^\alpha h\Big)dxdy$ into three parts:
\begin{align}\label{divide}
	&-\int_{\Omega}\Big(I_1\cdot\ly^{2l+2k}D^\alpha u+I_2\cdot\ly^{2l+2k}D^\alpha h\Big)dxdy\nonumber\\
	=~&-\sum_{i=1}^3\int_{\Omega}\Big(I_1^i\cdot\ly^{2l+2k}D^\alpha u+I_2^i\cdot\ly^{2l+2k}D^\alpha h\Big)dxdy\nonumber\\
	:=~&G_1+G_2+G_3,
\end{align}
and estimate each $G_i, i=1,2,3$ in the following. 
Firstly, note that
\begin{align*}
\phi(y)~\equiv~y,\quad \phi'(y)~\equiv~1,\quad \phi^{(i)}(y)~\equiv~0,\qquad{\mbox for}~y\geq2R_0,~i\geq2,\end{align*}
and then, there exists some positive constant $C$ such that
\begin{align}\label{phi_y}
	\|\ly^{i-1}\phi^{(i)}(y)\|_{L^\infty(\bR_+)},~\|\ly^{\lambda}\phi^{(j)}(y)\|_{L^\infty(\bR_+)}~\leq~C,\quad\mbox{for}\quad i=0,1,~j\geq2,~\lambda\in\bR,.
\end{align}
\indent\newline
\textbf{\textit{Estimate for $G_1$:}}  Note that 
\[\p_x(u+U\phi')+\p_y(v-U_x\phi)=0,\quad \p_x(h+H\phi')+\p_y(g-H_x\phi)=0,\]
and the boundary conditions $(v-U_x\phi)|_{y=0}=(g-H_x\phi)|_{y=0}=0,$ we obtain that by integration by parts,
\begin{align*}
	G_1~=~&-\frac{1}{2}\int_{\Omega}\Big\{\ly^{2l+2k}\big[(u+U\phi')\p_x+(v-U_x\phi)\p_y\big]\big(|D^\alpha u|^2+|D^\alpha h|^2\big)\Big\}dxdy\nonumber\\
	&+\int_{\Omega}\Big\{\ly^{2l+2k}\big[(h+H\phi')\p_x+(g-H_x\phi)\p_y\big]\big(D^\alpha u\cdot D^\alpha h\big)\Big\}dxdy\nonumber\\
	=~&(l+k)\int_{\Omega}\Big\{\ly^{2l+2k-1}(v-U_x\phi)\cdot\big(|D^\alpha u|^2+|D^\alpha h|^2\big)\Big\}dxdy\nonumber\\
	&-2(l+k)\int_{\Omega}\Big\{\ly^{2l+2k-1}(g-H_x\phi)\cdot\big(D^\alpha u\cdot D^\alpha h\big)\Big\}dxdy.
\end{align*}
Then, by using that $v=-\p_y^{-1}\p_xu,g=-\p_y^{-1}\p_xh$ and \eqref{phi_y} for $i=0,$ 
we get that by virtue of \eqref{normal} and Sobolev embedding inequality,
\begin{align}\label{est_G1}
	G_1~\leq~ &(l+k)\Big(\Big\|\frac{v-U_x\phi}{1+y}\Big\|_{L^\infty(\Omega)}+\Big\|\frac{g-H_x\phi}{1+y}\Big\|_{L^\infty(\Omega)}\Big)\cdot\big\|\ly^{l+k}D^\alpha(u,h)(t)\big\|_{L^2(\Omega)}^2\nonumber\\
	\leq~&C\big(\|u_x(t)\|_{L^\infty(\Omega)}+\|h_x(t)\|_{L^\infty(\Omega)}+\|(U_x,H_x)(t)\|_{L^\infty(\bT_x)} 
	\big)\cdot\big\|\ly^{l+k}D^\alpha(u,h)(t)\big\|_{L^2(\Omega)}^2\nonumber\\
	\leq~&C\big(\|(u, h)(t)\|_{\h_0^3}+\|(U_x,H_x)(t)\|_{L^\infty(\bT_x)}\big)\|(u, h)(t)\|_{\h_l^m}^2. 
\end{align}

\indent\newline
\textbf{\textit{Estimate for $G_2$:}}
For $G_2$, note that
\begin{align}\label{G2}
	G_2~\leq~\|I_1^2(t)\|_{L^2_{l+k}(\Omega)}\|D^\alpha u(t)\|_{L^2_{l+k}(\Omega)}+\|I_2^2(t)\|_{L^2_{l+k}(\Omega)}\|D^\alpha h(t)\|_{L^2_{l+k}(\Omega)}.
\end{align}
Thus, we need to obtain $\|I_1^2(t)\|_{L^2_{l+k}(\Omega)}$ and $\|I_2^2(t)\|_{L^2_{l+k}(\Omega)}$. To this end, we are going to estimate only the $L^2_{l+k}$ of
$I_1^2$, 
because the $L^2_{l+k}-$estimate on $I_2^2$ 
can be obtained similarly. 

Rewrite the quantity $I_1^2$ as: 
\begin{align}\label{I12}
	I_1^2~=~&\big[D^\alpha, u\p_x+v\p_y\big]u-\big[D^\alpha, h\p_x+g\p_y\big]h\nonumber\\
	&+\big[D^\alpha, U\phi'\p_x-U_x\phi\p_y\big]u-\big[D^\alpha, H\phi'\p_x-H_x\phi\p_y\big]h\nonumber\\
:=~&I_{1,1}^2+I_{1,2}^2.
\end{align}
In the following, we will estimate $\|I_{1,1}^2\|_{L^2_{l+k}(\Omega)}$ and $\|I_{1,2}^2\|_{L_{l+k}^2(\Omega)}$ respectively.
\indent\newline
\underline{\textit{$L^2_{l+k}-$estimate on $I_{1,1}^2$:}}
The quantity $I_{1,1}^2$ can be expressed as:
\begin{align}\label{I112}
	I_{1,1}^2~=~&\sum_{0<\tilde\alpha\leq\alpha}\left(\begin{array}{ccc}
		 \alpha \\ \tilde\alpha 
	\end{array}\right)\Big\{\Big(D^{\tilde\alpha}u~\p_x+D^{\tilde\alpha}v~\p_y\Big)(D^{\alpha-\tilde\alpha}u)-\Big(D^{\tilde\alpha}h~\p_x+D^{\tilde\alpha}g~\p_y\Big)(D^{\alpha-\tilde\alpha}h)\Big\}.
\end{align}
Let $\tilde\alpha\triangleq(\tilde\beta,\tilde k)$, then we will study
the terms in \eqref{I112} through the following two cases
corresponding to $\tilde k=0$ and $\tilde k\geq1$ respectively.

\indent\newline
\textit{Case 1: $\tilde k=0.$} Firstly, $D^{\tilde\alpha}=\p_\tau^{\tilde\beta}$ and $\tilde\beta\geq e_i, i=1$ or 2 since $|\tilde\alpha|>0$. Then, we obtain that by \eqref{Morse},
\begin{align*}
\big\|D^{\tilde\alpha}u\cdot\p_x D^{\alpha-\tilde\alpha}u\big\|_{L^2_{l+k}(\Omega)}=&\big\|\p_\tau^{\tilde\beta-e_i}(\p_\tau^{e_i}u)\cdot D^{\alpha-\tilde\alpha}(\p_xu)	\big\|_{L^2_{l+k}(\Omega)}\\
\leq~&C\|\p_\tau^{e_i}u(t)\|_{\h_0^{m-1}}\|\p_xu(t)\|_{\h_l^{m-1}}~\leq~C\|u(t)\|_{\h_l^{m}}^2,
\end{align*}
provided that $m-1\geq3$. Similarly, it also holds
\begin{align*}
\big\|D^{\tilde\alpha}h\cdot\p_x D^{\alpha-\tilde\alpha}h\big\|_{L^2_{l+k}(\Omega)}\leq~C\|h(t)\|_{\h_l^{m}}^2.
\end{align*}

On the other hand, by using $v=-\p_y^{-1}\p_xu,$ we have
\begin{align*}
D^{\tilde\alpha}v\cdot\p_y D^{\alpha-\tilde\alpha}u~=~&-\p_\tau^{\tilde\beta}\p_y^{-1}(\p_xu)\cdot \p_\tau^{\beta-\tilde\beta}\p_y^{k+1}u.
\end{align*}
Then, when $|\alpha|=|\beta|+k\leq m-1$, applying \eqref{normal1} to the right-hand side of the above equality yields
\begin{align*}
\big\|D^{\tilde\alpha}v\cdot\p_y D^{\alpha-\tilde\alpha}u\big\|_{L^2_{l+k}(\Omega)}=~&\big\|\p_\tau^{\tilde\beta}\p_y^{-1}(\p_xu)\cdot \p_\tau^{\beta-\tilde\beta}\p_y^k(\p_yu)	\big\|_{L^2_{l+k}(\Omega)}\\
\leq~&C\|\p_xu(t)\|_{\h_0^{m-1}}\|\p_yu(t)\|_{\h_{l+1}^{m-1}}~\leq~C\|u(t)\|_{\h_l^{m}}^2,
\end{align*}
provided that $m-1\geq3$. When $|\alpha|=|\beta|+k=m$,  it implies that $k\geq1$ since $|\beta|\leq m-1,$ and consequently, we get that by \eqref{normal1},
\begin{align*}
\big\|D^{\tilde\alpha}v\cdot\p_y D^{\alpha-\tilde\alpha}u\big\|_{L^2_{l+k}(\Omega)}=~&\big\|\p_\tau^{\tilde\beta-e_i}\p_y^{-1}(\p_\tau^{e_i+e_2}u)\cdot \p_\tau^{\beta-\tilde\beta}\p_y^{k-1}(\p_y^2u)	\big\|_{L^2_{l+1+(k-1)}(\Omega)}\\
\leq~&C\|\p_\tau^{e_i+e_2}u(t)\|_{\h_0^{m-2}}\|\p_y^2u(t)\|_{\h_{l+2}^{m-2}}~\leq~C\|u(t)\|_{\h_l^{m}}^2,
\end{align*}
provided that $m-2\geq3$. Therefore, it holds that for $|\alpha|=|\beta|+k\leq m, |\beta|\leq m-1$,
\begin{align*}
\big\|D^{\tilde\alpha}v\cdot\p_y D^{\alpha-\tilde\alpha}u\big\|_{L^2_{l+k}(\Omega)}\leq~C\|u(t)\|_{\h_l^{m}}^2.
\end{align*}
Similarly,  one can obtain 
\begin{align*}
\big\|D^{\tilde\alpha}g\cdot\p_y D^{\alpha-\tilde\alpha}h\big\|_{L^2_{l+k}(\Omega)}\leq~C\|h(t)\|_{\h_l^{m}}^2.
\end{align*}
Thus, we conclude that for $\tilde k=0$ with $\tilde\alpha=(\tilde\beta, \tilde k)$,
\begin{align}\label{est_I112-1}
	\big\|\big(D^{\tilde\alpha}u~\p_x+D^{\tilde\alpha}v~\p_y\big)(D^{\alpha-\tilde\alpha}u)-\big(D^{\tilde\alpha}h~\p_x+D^{\tilde\alpha}g~\p_y\big)(D^{\alpha-\tilde\alpha}h)\big\|_{L^2_{l+k}(\Omega)}\leq~&C\|(u,h)(t)\|_{\h_l^{m}}^2.
\end{align}

\indent\newline
\textit{Case 2: $\tilde k\geq1.$} It follows that $\tilde\alpha\geq E_3,$ and then, the  right-hand side of \eqref{I112} becomes:
\begin{align*}
	&\big(D^{\tilde\alpha}u~\p_x+D^{\tilde\alpha}v~\p_y\big)(D^{\alpha-\tilde\alpha}u)-\big(D^{\tilde\alpha}h~\p_x+D^{\tilde\alpha}g~\p_y\big)(D^{\alpha-\tilde\alpha}h)\\
	=~&\big(D^{\tilde\alpha}u~\p_x-D^{\tilde\alpha-E_3}(\p_xu)~\p_y\big)(D^{\alpha-\tilde\alpha}u)-\big(D^{\tilde\alpha}h~\p_x-D^{\tilde\alpha-E_3}(\p_xh)~\p_y\big)(D^{\alpha-\tilde\alpha}h).
\end{align*}
By applying \eqref{Morse} to the terms on the right-hand side of the above quality, we get
\begin{align*}
\big\|D^{\tilde\alpha}u\cdot\p_x D^{\alpha-\tilde\alpha}u\big\|_{L^2_{l+k}(\Omega)}=~&\big\|D^{\tilde\alpha-E_3}(\p_yu)\cdot D^{\alpha-\tilde\alpha}(\p_xu)	\big\|_{L^2_{l+1+(k-1)}(\Omega)}\\
\leq~&C\|\p_yu(t)\|_{\h_{l+1}^{m-1}}\|\p_xu(t)\|_{\h_0^{m-1}}~\leq~C\|u(t)\|_{\h_l^{m}}^2,\\
\big\|D^{\tilde\alpha-E_3}(\p_xu)\cdot\p_y D^{\alpha-\tilde\alpha}u\big\|_{L^2_{l+k}(\Omega)}=~&\big\|D^{\tilde\alpha-E_3}(\p_xu)\cdot D^{\alpha-\tilde\alpha}(\p_yu)	\big\|_{L^2_{l+1+(k-1)}(\Omega)}\\
\leq~&C\|\p_xu(t)\|_{\h_{0}^{m-1}}\|\p_yu(t)\|_{\h_{l+1}^{m-1}}~\leq~C\|u(t)\|_{\h_l^{m}}^2,\\
\big\|D^{\tilde\alpha}h\cdot\p_x D^{\alpha-\tilde\alpha}h\big\|_{L^2_{l+k}(\Omega)}=~&\big\|D^{\tilde\alpha-E_3}(\p_yh)\cdot D^{\alpha-\tilde\alpha}(\p_xh)	\big\|_{L^2_{l+1+(k-1)}(\Omega)}\\
\leq~&C\|\p_yh(t)\|_{\h_{l+1}^{m-1}}\|\p_xh(t)\|_{\h_0^{m-1}}~\leq~C\|h(t)\|_{\h_l^{m}}^2,\\
\big\|D^{\tilde\alpha-E_3}(\p_xh)\cdot\p_y D^{\alpha-\tilde\alpha}h\big\|_{L^2_{l+k}(\Omega)}=~&\big\|D^{\tilde\alpha-E_3}(\p_xh)\cdot D^{\alpha-\tilde\alpha}(\p_yh)	\big\|_{L^2_{l+1+(k-1)}(\Omega)}\\
\leq~&C\|\p_xh(t)\|_{\h_{0}^{m-1}}\|\p_yh(t)\|_{\h_{l+1}^{m-1}}~\leq~C\|h(t)\|_{\h_l^{m}}^2.
\end{align*}
Consequently, we actually conclude that for $\tilde k\geq1$ with $\tilde\alpha=(\tilde\beta, \tilde k)$,
\begin{align}\label{est_I112-2}
	\big\|\big(D^{\tilde\alpha}u~\p_x+D^{\tilde\alpha}v~\p_y\big)(D^{\alpha-\tilde\alpha}u)-\big(D^{\tilde\alpha}h~\p_x+D^{\tilde\alpha}g~\p_y\big)(D^{\alpha-\tilde\alpha}h)\big\|_{L^2_{l+k}(\Omega)}\leq~&C\|(u,h)(t)\|_{\h_l^{m}}^2.
\end{align}

Finally, based on the results obtained in the above two cases, it holds that by using \eqref{est_I112-1} and \eqref{est_I112-2} in \eqref{I112},
\begin{align}\label{est_I112}
	\|I_{1,1}^2(t)\|_{L^2_{l+k}(\Omega)}\leq ~C\|(u,h)(t)\|_{\h_l^m}^2.
\end{align}

\indent\newline
\underline{\textit{$L^2_{l+k}-$estimate on $I_{1,2}^2$:}} 
 Write
\begin{align*}
	I_{1,2}^2~=~\sum_{0<\tilde\alpha\leq\alpha}\left(\begin{array}{ccc}
		 \alpha \\ \tilde\alpha 
	\end{array}\right)\Big\{&\Big(D^{\tilde\alpha}(U\phi')~\p_x-D^{\tilde\alpha}(U_x\phi)~\p_y\Big)(D^{\alpha-\tilde\alpha}u)-\Big(D^{\tilde\alpha}(H\phi')~\p_x-D^{\tilde\alpha}(H_x\phi)~\p_y\Big)(D^{\alpha-\tilde\alpha}h)\Big\}.
\end{align*}
Let $\tilde\alpha\triangleq(\tilde\beta,\tilde k)$ and note that $|\alpha-\tilde\alpha|\leq |\alpha|-1\leq m-1.$ By using \eqref{phi_y},
we estimate each term on the right hand side of the above equility as follows:
\begin{align*}
	\big\|D^{\tilde\alpha}(U\phi')\cdot\p_x D^{\alpha-\tilde\alpha}u\big\|_{L^2_{l+k}(\Omega)}\leq~&\big\|\ly^{\tilde k}D^{\tilde\alpha}(U\phi')(t)\big\|_{L^\infty(\Omega)}\big\|\ly^{l+k-\tilde k}\p_x D^{\alpha-\tilde\alpha}u(t)\big\|_{L^2(\Omega)}\\\leq~&C\big\|\p_\tau^{\tilde\beta}U(t)\big\|_{L^\infty(\bT_x)}\|u(t)\|_{\h_l^m},\\
	\big\|D^{\tilde\alpha}(U_x\phi)\cdot\p_y D^{\alpha-\tilde\alpha}u\big\|_{L^2_{l+k}(\Omega)}\leq~&\big\|\ly^{\tilde k-1}D^{\tilde\alpha}(U_x\phi)(t)\big\|_{L^\infty(\Omega)}\big\|\ly^{l+k-\tilde k+1}\p_y D^{\alpha-\tilde\alpha}u(t)\big\|_{L^2(\Omega)}\\
	\leq~&C\big\|\p_\tau^{\tilde\beta}U_x(t)\big\|_{L^\infty(\bT_x)}\|u(t)\|_{\h_l^m},
	\end{align*}
	and similarly,
	\begin{align*}
	\big\|D^{\tilde\alpha}(H\phi')\cdot\p_x D^{\alpha-\tilde\alpha}h\big\|_{L^2_{l+k}(\Omega)}
	\leq~C\big\|\p_\tau^{\tilde\beta}H(t)\big\|_{L^\infty(\bT_x)}\|h(t)\|_{\h_l^m},\\
	\big\|D^{\tilde\alpha}(H_x\phi)\cdot\p_y D^{\alpha-\tilde\alpha}h\big\|_{L^2_{l+k}(\Omega)}
	\leq~C\big\|\p_\tau^{\tilde\beta}H_x(t)\big\|_{L^\infty(\bT_x)}\|h(t)\|_{\h_l^m}.
\end{align*}
Therefore, it follows 
\begin{align}\label{est_I122}
\|I_{1,2}^2(t)\|_{L^2_{l+k}(\Omega)}~\leq~C\|(u,h)(t)\|_{\h_l^m}\cdot\big(\sum_{|\beta|\leq m+1}\|\p_\tau^\beta(U,H)(t)\|_{L^\infty(\bT_x)}\big).
\end{align}

Now, we can obtain the estimate of $\|I_1^2\|_{L_{l+k}^2(\Omega)}$. Indeed, plugging \eqref{est_I112} and \eqref{est_I122} into \eqref{I12} yields
\begin{align}\label{est_I12}
\|I_{1}^2(t)\|_{L^2_{l+k}(\Omega)}~\leq~C\big(\sum_{|\beta|\leq m+1}\|\p_\tau^\beta(U,H)(t)\|_{L^\infty(\bT_x)}+\|(u, h)(t)\|_{\h_l^m}\big)~\|(u,h)(t)\|_{\h_l^m}.
\end{align}
Similarly, one can also get
\begin{align}\label{est_I22}
\|I_{2}^2(t)\|_{L^2_{l+k}(\Omega)}~\leq~C\big(\sum_{|\beta|\leq m+1}\|\p_\tau^\beta(U,H)(t)\|_{L^\infty(\bT_x)}+\|(u, h)(t)\|_{\h_l^m}\big)~\|(u,h)(t)\|_{\h_l^m},
\end{align}
then, substituting \eqref{est_I12} and \eqref{est_I22} into \eqref{G2} gives
\begin{align}\label{est_G2}
	G_2~\leq~&C\Big(\sum_{|\beta|\leq m+1}\|\p_\tau^\beta(U,H)(x)\|_{L^\infty(\bT_x)}+\|(u, h)(t)\|_{\h_l^m}\Big)~\|(u, h)(t)\|_{\h_l^m}\|D^\alpha (u, h)(t)\|_{L^2_{l+k}(\Omega)}\nonumber\\
	\leq~&C\Big(\sum_{|\beta|\leq m+2}\|\p_\tau^\beta(U,H)(t)\|_{L^2(\bT_x)}+\|(u, h)(t)\|_{\h_l^m}\Big)~\|(u, h)(t)\|_{\h_l^m}^2.
\end{align}

\indent\newline
\textbf{\textit{Estimate on $G_3$:}}
For $G_3,$ the Cauchy-Schwarz inequality implies
\begin{align}\label{G3}
	G_3~\leq~\|I_1^3(t)\|_{L^2_{l+k}(\Omega)}\|D^\alpha u(t)\|_{L^2_{l+k}(\Omega)}+\|I_2^3(t)\|_{L^2_{l+k}(\Omega)}\|D^\alpha h(t)\|_{L^2_{l+k}(\Omega)}.
\end{align}
Then, it remains to estimate $\|I_1^3(t)\|_{L^2_{l+k}(\Omega)}$ and $\|I_2^3(t)\|_{L^2_{l+k}(\Omega)}$. In the following, we are going to establish the weighted estimate on $I_1^3$, 
for example, and  the weighed estimate on $I_2^3$ 
can be obtained in a similar way. 

Recall that $D^\alpha=\p_\tau^\beta\p_y^k, $ we have
\begin{align}\label{I13}
I_1^3~=~\sum_{\tilde\alpha\leq\alpha}\left(\begin{array}{ccc}\alpha \\ \tilde\alpha\end{array}\right)\Big[D^{\tilde\alpha}u\cdot D^{\alpha-\tilde\alpha}(U_x\phi')+D^{\tilde\alpha}v\cdot D^{\alpha-\tilde\alpha}(U\phi'')-D^{\tilde\alpha}h\cdot D^{\alpha-\tilde\alpha}(H_x\phi')-D^{\tilde\alpha}g\cdot D^{\alpha-\tilde\alpha}(H\phi'')\Big].	
\end{align}
Then, let $\tilde\alpha\triangleq(\tilde\beta,\tilde k)$, and we estimate each term in  \eqref{I13} as follows. Firstly, by using \eqref{phi_y} we have
\begin{align*}
\big\|D^{\tilde\alpha}u\cdot D^{\alpha-\tilde\alpha}(U_x\phi')\big\|_{L^2_{l+k}(\Omega)}\leq~&\big\|\ly^{l+\tilde k}D^{\tilde\alpha}u(t)\big\|_{L^2(\Omega)}\big\|\ly^{k-\tilde k}D^{\alpha-\tilde\alpha}(U_x\phi')(t)\big\|_{L^\infty(\Omega)}\\\leq~&C\|u(t)\|_{\h_l^m}\big\|\p_\tau^{\beta-\tilde\beta}U_x(t)\big\|_{L^\infty(\bT_x)},
\end{align*}
and similarly,
\begin{align*}
\big\|D^{\tilde\alpha}h\cdot D^{\alpha-\tilde\alpha}(H_x\phi')\big\|_{L^2_{l+k}(\Omega)}
\leq~&C\|h(t)\|_{\h_l^m}\big\|\p_\tau^{\beta-\tilde\beta}H_x(t)\big\|_{L^\infty(\bT_x)}.
\end{align*}

Secondly, as $v=-\p_y^{-1}\p_xu$, it reads
\begin{align*}
	D^{\tilde\alpha}v\cdot D^{\alpha-\tilde\alpha}(U\phi'')=-D^{\tilde\alpha+E_2}\p_y^{-1}u\cdot D^{\alpha-\tilde\alpha}(U\phi'').
\end{align*}
Therefore, if $\tilde k\geq1$, it follows that by \eqref{phi_y},
\begin{align*}
\big\|D^{\tilde\alpha}v\cdot D^{\alpha-\tilde\alpha}(U\phi'')\big\|_{L^2_{l+k}(\Omega)}=~&\big\|\p_\tau^{\tilde\beta+e_2}\p_y^{\tilde k-1}u\cdot \p_\tau^{\beta-\tilde\beta}\p_y^{k-\tilde k}(U\phi'')\big\|_{L^2_{l+k}(\Omega)}\\
\leq~&\big\|\ly^{\tilde k-1}\p_\tau^{\tilde\beta+e_2}\p_y^{\tilde k-1}u(t)\big\|_{L^2(\Omega)}\big\|\ly^{l+k-\tilde k+1}
\p_\tau^{\beta-\tilde\beta}\p_y^{k-\tilde k}(U\phi'')(t)\big\|_{L^\infty(\Omega)}\\
\leq~&C\|u(t)\|_{\h_0^m}\big\|\p_\tau^{\beta-\tilde\beta}U(t)\big\|_{L^\infty(\bT_x)};
\end{align*}
if $\tilde k=0$, we obtain that by \eqref{normal1} and \eqref{phi_y}, 
\begin{align*}
\big\|D^{\tilde\alpha}v\cdot D^{\alpha-\tilde\alpha}(U\phi'')\big\|_{L^2_{l+k}(\Omega)}=~&\Big\|\frac{\p_\tau^{\tilde\beta+e_2}\p_y^{-1}u}{1+y}\cdot \p_\tau^{\beta-\tilde\beta}\p_y^k(U\phi'')\Big\|_{L^2_{l+k+1}(\Omega)}\\
\leq~&\Big\|\frac{\p_\tau^{\tilde\beta+e_2}\p_y^{-1}u(t)}{1+y}\Big\|_{L^2(\Omega)}\big\|\ly^{l+k+1}
\p_\tau^{\beta-\tilde\beta}\p_y^k(U\phi'')(t)\big\|_{L^\infty (\Omega)}\\
\leq~&C\|\p_\tau^{\tilde\beta+e_2}u(t)\|_{L^2(\Omega)}\big\|\p_\tau^{\beta-\tilde\beta}U(t)\big\|_{L^\infty(\bT_x)}\\
\leq~ &C\|u(t)\|_{\h_0^m}\big\|\p_\tau^{\beta-\tilde\beta}U(t)\big\|_{L^\infty(\bT_x)},
\end{align*} 
provided that $|\tilde\beta|\leq|\beta|\leq m-1.$
Combining the above two inequalities yields that
\begin{align*}
	\big\|D^{\tilde\alpha}v\cdot D^{\alpha-\tilde\alpha}(U\phi'')\big\|_{L^2_{l+k}(\Omega)}
\leq~&C\|u(t)\|_{\h_0^m}\cdot\big(\sum_{|\beta|\leq m+1}\|\p_\tau^\beta(U,H)(t)\|_{L^\infty(\bT_x)}\big).
\end{align*}
Similarly, we have
\begin{align*}
	\big\|D^{\tilde\alpha}g\cdot D^{\alpha-\tilde\alpha}(H\phi'')\big\|_{L^2_{l+k}(\Omega)}
\leq~&C\|h(t)\|_{\h_0^m}\cdot\big(\sum_{|\beta|\leq m+1}\|\p_\tau^\beta(U,H)(t)\|_{L^\infty(\bT_x)}\big).
\end{align*}

We take into account the above arguments, to conclude that
\begin{align}\label{est_I13}
	\|I_1^3(t)\|_{L^2_{l+k}(\Omega)}~\leq~C\|(u, h)(t)\|_{\h_l^m}\cdot\big(\sum_{|\beta|\leq m+1}\|\p_\tau^\beta(U,H)(t)\|_{L^\infty(\bT_x)}\big).
\end{align}
Then, one can obtain a similar estimate of $I_2^3$: 
\begin{align}\label{est_I23}
	\|I_2^3(t)\|_{L^2_{l+k}(\Omega)}~\leq~C\|(u, h)(t)\|_{\h_l^m}\cdot\big(\sum_{|\beta|\leq m+1}\|\p_\tau^\beta(U,H)(t)\|_{L^\infty(\bT_x)}\big),
\end{align}
which implies that by plugging \eqref{est_I13} and \eqref{est_I23} into \eqref{G3},
\begin{align}\label{est_G3}
	G_3~\leq~&C\|D^\alpha (u, h)(t)\|_{L^2_{l+k}(\Omega)} \|(u, h)(t)\|_{\h_l^m}\cdot\big(\sum_{|\beta|\leq m+1}\|\p_\tau^\beta(U,H)(x)\|_{L^\infty(\bT_x)}\big)\nonumber\\
	\leq~&C\|(u, h)(t)\|_{\h_l^m}^2\cdot\big(\sum_{|\beta|\leq m+2}\|\p_\tau^\beta(U,H)(t)\|_{L^2(\bT_x)}\big).
\end{align}
 
 Now, as we have completed the estimates on $G_i, i=1,2,3$ given by \eqref{est_G1}, \eqref{est_G2} and \eqref{est_G3} respectively, from \eqref{divide} the conclusion of this step follows immediately:
\begin{align*}
	&-\int_{\Omega}\Big(I_1\cdot\ly^{2l+2k}D^\alpha u+I_2\cdot\ly^{2l+2k}D^\alpha h\Big)dxdy\nonumber\\
	\leq~&C\big(\sum_{|\beta|\leq m+2}\|\p_\tau^\beta(U,H)(t)\|_{L^2(\bT_x)}+\|(u, h)(t)\|_{\h_l^m}\big)~\|(u, h)(t)\|_{\h_l^m}^2,
\end{align*}
and we complete the proof of \eqref{est-convect}. 
\end{proof}


\subsection{Weighted $H^m_l-$estimates only in tangential variables}
\indent\newline
Similar to  the classical Prandtl equations, an essential difficulty for solving the problem \eqref{bl_main} is the loss of one derivative in the 
tangential variable $x$ in the terms $v\p_yu-g\p_yh$ and $v\p_yh-g\p_yu$.
In other words, $v=-\p_y^{-1}\p_xu$ and $g=-\p_y^{-1}\p_xh$, by the divergence free conditions, create a loss of $x-$derivative that prevents us to apply the standard energy estimates.  Precisely,  consider the following equations of $\p_\tau^\beta(u,h)$ with $|\beta|=m$, by taking the $m-$th order tangential derivatives on the first two equations of \eqref{bl_main}
\begin{equation}
	\label{eq_xm}\begin{cases}
		\p_t\p_\tau^\beta u+\big[(u+U\phi')\partial_x+(v-U_x\phi)\partial_y\big]\p_\tau^\beta u-\big[(h+H\phi')\partial_x+(g-H_x\phi)\partial_y\big]\p_\tau^\beta h-\mu\p_y^2\p_\tau^\beta u\\
\qquad\qquad+(\p_yu+U\phi'')\p_\tau^\beta v
-(\p_yh+H\phi'')\p_\tau^\beta g=\p_\tau^\beta r_1+R_u^\beta,\\
	\p_t\p_\tau^\beta h+\big[(u+U\phi')\partial_x+(v-U_x\phi)\partial_y\big]\p_\tau^\beta h-\big[(h+H\phi')\partial_x+(g-H_x\phi)\partial_y\big]\p_\tau^\beta u-\kappa\p_y^2\p_\tau^\beta h\\
\qquad\qquad+(\p_yh+H\phi'')\p_\tau^\beta v
-(\p_yu+U\phi'')\p_\tau^\beta g=\p_\tau^\beta r_2+R_h^\beta,
	\end{cases}
\end{equation}
where 
\begin{equation}\label{def_R}
	\begin{cases}
		R_u^\beta~=&\p_\tau^\beta\big(-U_x\phi'u+H_x\phi'h\big)-[\p_\tau^\beta, U\phi'']v+[\p_\tau^\beta, H\phi'']g-[\p_\tau^\beta,(u+U\phi')\p_x-U_x\phi\p_y]u\\
		&+[\p_\tau^\beta,(h+H\phi')\p_x-H_x\phi\p_y]h-\sum\limits_{0<\tilde\beta<\beta}\left(\begin{array}{ccc}
			\beta\\ \tilde\beta
		\end{array}\right)\Big(\p_\tau^{\tilde\beta} v\cdot\p_\tau^{\beta-\tilde\beta}\p_yu-\p_\tau^{\tilde\beta} g\cdot\p_\tau^{\beta-\tilde\beta}\p_yh\Big),\\
		R_h^\beta~=&\p_\tau^\beta\big(-H_x\phi'u+U_x\phi'h\big)-[\p_\tau^\beta, H\phi'']v+[\p_\tau^\beta, U\phi'']g-[\p_\tau^\beta,(u+U\phi')\p_x-U_x\phi\p_y]h\\
		&+[\p_\tau^\beta,(h+H\phi')\p_x-H_x\phi\p_y]u-\sum\limits_{0<\tilde\beta<\beta}\left(\begin{array}{ccc}
			\beta\\ \tilde\beta
		\end{array}\right)\Big(\p_\tau^{\tilde\beta} v\cdot\p_\tau^{\beta-\tilde\beta}\p_yh-\p_\tau^{\tilde\beta} g\cdot\p_\tau^{\beta-\tilde\beta}\p_yu\Big).
	\end{cases}
\end{equation}
From the expression \eqref{def_R} and  by using the inequalities \eqref{Morse}-\eqref{normal1}, we can control the $L_l^2(\Omega)-$estimates of  each term given in \eqref{def_R}, and then obtain the estimates of $\|R_u^\beta(t)\|_{L_l^2(\Omega)}$ and $\|R_h^\beta(t)\|_{L_l^2(\Omega)}$. For example, 
for $\tilde\beta>0$, which implies that $\tilde\beta\geq e_i, i=1$ or 2, by virtue of \eqref{Morse}, 
\begin{align*}
	&\big\|\big[\p_\tau^{\tilde\beta}(u+U\phi')\p_x-\p_\tau^{\tilde\beta}(U_x\phi)\p_y\big](\p_\tau^{\beta-\tilde\beta}u)\big\|_{L_l^2(\Omega)}\\
	\leq~&\big\|\big[\p_\tau^{\tilde\beta-e_i}(\p_\tau^{e_i}u)\cdot\p_\tau^{\beta-\tilde\beta}(\p_xu)\big\|_{L_l^2(\Omega)}+\big\|\p_\tau^{\tilde\beta}(U\phi')(t)\big\|_{L^\infty(\Omega)}\|\p_x\p_\tau^{\beta-\tilde\beta}u(t)\|_{L_l^2(\Omega)}\\
	&+\Big\|\frac{\p_\tau^{\tilde\beta}(U_x\phi)(t)}{1+y}\Big\|_{L^\infty(\Omega)}\|\p_y\p_\tau^{\beta-\tilde\beta}u(t)\|_{L_{l+1}^2(\Omega)}\\
	\leq~&C\|\p_\tau^{e_i}u(t)\|_{\h_0^{m-1}}\|\p_xu(t)\|_{\h_l^{m-1}}+C\|\p_\tau^{\tilde\beta}(U, U_x)(t)\|_{L^\infty(\bT_x)}\|u(t)\|_{\h^m_l}\\
	\leq ~&C\big(\|\p_\tau^{\tilde\beta}(U, U_x)(t)\|_{L^\infty(\bT_x)}+\|u(t)\|_{\h^m_l}\big)\|u(t)\|_{\h^m_l},
\end{align*}
provided $m-1\geq3$ and $|\beta-\tilde\beta|\leq m-1$;
\eqref{normal1} gives that for $\tilde\beta<\beta$
\begin{align*}
	\big\|\p_\tau^{\tilde\beta}v\cdot\p_\tau^{\beta-\tilde\beta}(U\phi'')\big\|_{L_l^2(\Omega)}\leq~&\Big\|\frac{\p_\tau^{\tilde\beta+e_2}\p_y^{-1}u(t)}{1+y}\Big\|_{L^2(\Omega)}\big\|\ly^{l+1}\p_\tau^{\beta-\tilde\beta}(U\phi'')(t)\big\|_{L^\infty(\Omega)}\\
	\leq~& C\|\p_\tau^{\beta-\tilde\beta}U(t)\|_{L^\infty(\bT_x)}\|u(t)\|_{\h_0^m};
\end{align*}
moreover, for $0<\tilde\beta<\beta$ which implies that $\tilde\beta\geq e_i, \beta-\tilde\beta\geq e_j, i, j=1$ or 2, \eqref{normal1} yields that 
\begin{align*}
	\big\|\p_\tau^{\tilde\beta}v\cdot\p_\tau^{\beta-\tilde\beta}(\p_y u)\big\|_{L_l^2(\Omega)}=~&\big\|\p_\tau^{\tilde\beta-e_i}\p_y^{-1}(\p_\tau^{e_i+e_2}u)\cdot\p_\tau^{\beta-\tilde\beta-e_j}(\p_\tau^{e_j}\p_y u)\big\|_{L_l^2(\Omega)}\\
	\leq~&C\|\p_\tau^{e_i+e_2}u(t)\|_{\h_0^{m-2}}\|\p_y\p_\tau^{e_j}u(t)\|_{\h_{l+1}^{m-2}}\leq~C\|u(t)\|_{\h_l^m}^2
\end{align*}
provided $m-2\geq3.$ The other terms in $R_u^\beta$ and $R_h^\beta$ can be estimated similarly so that
\begin{align}\label{est_error-m}
	\|R_u^\beta(t)\|_{L_l^2(\Omega)},~\|R_h^\beta(t)\|_{L_l^2(\Omega)}\leq~&
	C\big(\sum_{|\beta|\leq m+2}\|\p_\tau^{\beta}(U, H)(t)\|_{L^2(\bT_x)}+\|(u,h)(t)\|_{\h_l^m}\big)\|(u,h)(t)\|_{\h_l^m}.
\end{align}

On the other hand, consider the equations \eqref{eq_xm}, the main 
difficulty comes from the terms 
$$(\p_yu+U\phi'')\p_\tau^\beta v-(\p_yh+H\phi'')\p_\tau^\beta g=-(\p_yu+U\phi'')\cdot(\p_y^{-1}\p_\tau^{\beta+e_2}u)+(\p_yh+H\phi'')\cdot(\p_y^{-1}\p_\tau^{\beta+e_2}h),$$ 
and 
$$(\p_yh+H\phi'')\p_\tau^\beta v-(\p_yu+U\phi'')\p_\tau^\beta g=-(\p_yh+H\phi'')\cdot(\p_y^{-1}\p_\tau^{\beta+e_2}u)+(\p_yu+U\phi'')\cdot(\p_y^{-1}\p_\tau^{\beta+e_2}h),$$ 
that contain the $m+1-$th order tangential derivatives which
can not  be controlled by the standard energy method. To overcome this difficulty, we rely on  the following two key observations. One is that from the equation \eqref{eq_h},  $\p_y^{-1}h$ satisfies the following equation (see also the equation \eqref{eq-psi} for $\psi$)
\[\p_t (\p_y^{-1}h)+(v-U_x\phi)(h+H\phi')-(g-H_x\phi)(u+U\phi')-\kappa\p_yh=-H_t\phi+\kappa H\phi'',\]
or
\[\p_t (\p_y^{-1}h)+(h+H\phi')v+(u+U\phi')\p_x(\p_y^{-1}h)-U_x\phi h+H_x\phi u-\kappa\p_yh=H_t\phi(\phi'-1)+\kappa H\phi'',\]
by using $g=-\p_x\p_y^{-1}h$ and the second relation of \eqref{Brou}. 
This inspires us  in the case of $h+H\phi'>0,$ to introduce the following two quantities 
\begin{equation}\label{new_qu}
 	 u_\beta:=\p_\tau^\beta u-\frac{\p_yu+U\phi''}{h+H\phi'}\p_\tau^\beta\p_y^{-1}h,\qquad  h_\beta:=\p_\tau^\beta h-\frac{\p_yh+H\phi''}{h+H\phi'}\p_\tau^\beta\p_y^{-1}h,
 \end{equation}
 to eliminate the terms involving $\p_\tau^\beta v$, then to avoid the loss of $x-$derivative on $v$. Note that the new quantities $(u_\beta, h_\beta)$ are almost equivalent to $\p_\tau^\beta(u,h)$ in $L^2_l$-norm, that is,
 \begin{equation}\label{equ}
 	\|\p_\tau^\beta(u,h)\|_{L^2_l(\Omega)}~\lesssim~\|(u_\beta,h_\beta)\|_{L^2_l(\Omega)}~\lesssim~\|\p_\tau^\beta(u,h)\|_{L^2_l(\Omega)},
 \end{equation}
 that will be proved at the end of this subsection.

Another observation is that by using the above two new 
unknowns $(u_\beta, h_\beta)$ in \eqref{new_qu}, the regularity loss generated by $g=-\p_y^{-1}\p_xh$, can be cancelled by using the convection terms $-(h+H\phi')\p_xh$ and $-(h+H\phi')\p_xu$, more precisely,  
\[\begin{split}
&-(h+H\phi')\p_x\p_\tau^\beta h-(\p_yh+H\phi'')\p_\tau^\beta g\\
=&-(h+H\phi')\p_x\Big(h_\beta+\frac{\p_yh+H\phi''}{h+H\phi'}\p_\tau^\beta\p_y^{-1}h\Big)+(\p_yh+H\phi'')\cdot(\p_y^{-1}\p_\tau^{\beta+e_2}h)\\
=&-(h+H\phi')\p_x h_\beta-(h+H\phi')\p_x\Big(\frac{\p_yh+H\phi''}{h+H\phi'}\Big)\cdot\p_\tau^\beta\p_y^{-1} h,	
\end{split}\]
and
\[\begin{split}
&-(h+H\phi')\p_x\p_\tau^\beta u-(\p_yu+U\phi'')\p_\tau^\beta g\\
=&-(h+H\phi')\p_x\Big(u_\beta+\frac{\p_yu+U\phi''}{h+H\phi'}\p_\tau^\beta\p_y^{-1}h\Big)+(\p_yu+U\phi'')\cdot(\p_y^{-1}\p_\tau^{\beta+e_2}h)\\
=&-(h+H\phi')\p_x u_\beta-(h+H\phi')\p_x\Big(\frac{\p_yu+U\phi''}{h+H\phi'}\Big)\cdot\p_\tau^\beta \p_y^{-1}h.
\end{split}\]
 This cancellation 
mechanism reveals the stabilizing effect of the magnetic field on the boundary layer.
Note that in the above expressions, the  convection terms can be handled by the symmetric structure of the system.
Based on the above discussion, we will carry out the estimation as follows. First of all, we always assume that there exists a positive constant $\delta_0\leq1,$ such that
\begin{equation}
	\label{priori_ass}
	h(t,x,y)+H(t,x)\phi'(y)\geq\delta_0,\qquad \mbox{for}\quad (t,x,y)\in [0,T]\times\Omega.
\end{equation}

Firstly, from the divergence free condition $\p_xh+\p_yg=0$,
there exists a stream function $\psi$, such that
\begin{align}\label{psi}
h=\p_y\psi,\quad g=-\p_x\psi,\quad \psi|_{y=0}=0.
\end{align}
Then, the equation \eqref{eq_h} for $h$ reads
\begin{align}\label{eq_psi}
&\p_t \p_y\psi+\p_y\big[(v-U_x\phi)(\p_y\psi+H\phi')+(\p_x\psi+H_x\phi)(u+U\phi')\big]-\kappa\p_y^3\psi=-H_t\phi'+\kappa H\phi^{(3)}.
\end{align}
By virtue of 
the boundary conditions:
\begin{align*}
\p_t\psi|_{y=0}=\p_x\psi|_{y=0}=\p_y^2\psi|_{y=0}=v|_{y=0}=0,
\end{align*}
and $\phi(y)\equiv0$ for $y\in[0,R_0]$,
we integrate the equation (\ref{eq_psi}) with respect to the variable $y$ over $[0,y]$, to obtain
\begin{align}\label{eq-psi}
\p_t \psi+\big[(u+U\phi')\p_x+(v-U_x\phi)\p_y\big]\psi+H_x\phi u+H\phi'v-\kappa\p_y^2\psi=r_3,
\end{align}
with
\begin{align}\label{r_3}
	r_3~=~H_t\phi(\phi'-1)+\kappa H\phi^{(3)}.
\end{align}

Next, applying the $m$-th order tangential derivatives operator on (\ref{eq-psi}) and by virtue of $\p_y\psi=h$, it yields that
\begin{align}\label{psi-m}
\p_t \p_\tau^\beta\psi+\big[(u+U\phi')\p_x+(v-U_x\phi)\p_y\big]\p_\tau^\beta\psi+(h+H\phi')\p_\tau^\beta v-\kappa\p_y^2\p_\tau^\beta\psi=\p_\tau^\beta r_3+R_\psi^\beta,
\end{align}
where $R_\psi^\beta$ is defined as follows:
\begin{align}\label{r0}
R_\psi^\beta=~&-\p_\tau^\beta\big(H_x\phi u\big)-[\p_\tau^\beta,H\phi']v-[\p_\tau^\beta,(u+U\phi')\p_x-U_x\phi\p_y]\psi-\sum\limits_{0<\tilde\beta<\beta}\left(\begin{array}{ccc}
			\beta\\ \tilde\beta
		\end{array}\right)\big(\p_\tau^{\tilde\beta} v\cdot\p_\tau^{\beta-\tilde\beta}\p_y\psi\big).
		\end{align}
By $\psi=\p_y^{-1}h$ and $v=-\p_x\p_y^{-1}u$, it gives
\begin{align*}
		R_\psi^\beta=~&-\p_\tau^\beta\big(H_x\phi u\big)+[\p_\tau^\beta,H\phi']\p_x\p_y^{-1}u-[\p_\tau^\beta,(u+U\phi')]\p_x\p_y^{-1}h+[\p_\tau^\beta, U_x\phi]h\\
		&+\sum\limits_{0<\tilde\beta<\beta}\left(\begin{array}{ccc}
			\beta\\ \tilde\beta
		\end{array}\right)\big(\p_\tau^{\tilde\beta+e_2} \p_y^{-1}u\cdot\p_\tau^{\beta-\tilde\beta}h\big)\\
		=~&-\sum\limits_{\tilde\beta\leq\beta}\left(\begin{array}{ccc}
			\beta\\ \tilde\beta
		\end{array}\right)\big[\p_\tau^{\tilde\beta}(H_x\phi)\cdot\p_\tau^{\beta-\tilde\beta}u\big]+\sum\limits_{0<\tilde\beta\leq\beta}\left(\begin{array}{ccc}
			\beta\\ \tilde\beta
		\end{array}\right)\Big[\p_\tau^{\tilde\beta}(H\phi')\cdot\p_\tau^{\beta-\tilde\beta+e_2}\p_y^{-1}u\\
		&\qquad-\p_\tau^{\tilde\beta}(u+U\phi')\cdot\p_\tau^{\beta-\tilde\beta+e_2}\p_y^{-1}h+\p_\tau^{\tilde\beta}(U_x\phi)\cdot\p_\tau^{\beta-\tilde\beta}h \Big]+\sum\limits_{0<\tilde\beta<\beta}\left(\begin{array}{ccc}
			\beta\\ \tilde\beta
		\end{array}\right)\big(\p_\tau^{\tilde\beta+e_2} \p_y^{-1}u\cdot\p_\tau^{\beta-\tilde\beta}h\big),
		\end{align*}
and then, we can estimate $\Big\|\frac{R_\psi^\beta(t)}{1+y}\Big\|_{L^2(\Omega)}$ from the above expression term by term. For example, it is easy to get that 
\begin{align*}
\Big\|\frac{\p_\tau^{\tilde\beta}(H_x\phi)\cdot\p_\tau^{\beta-\tilde\beta}u}{1+y}\Big\|_{L^2(\Omega)}\leq~&\Big\|\frac{\p_\tau^{\tilde\beta}(H_x\phi)(t)}{1+y}\Big\|_{L^\infty(\Omega)}\big\|\p_\tau^{\beta-\tilde\beta}u(t)\big\|_{L^2(\Omega)}\leq~ C\|\p_\tau^{\tilde\beta}H_x(t)\|_{L^\infty(\bT_x)}\|u(t)\|_{\h_0^m},
\end{align*}
and \eqref{normal1} implies that
\begin{align*}
	\Big\|\frac{\p_\tau^{\tilde\beta}(H\phi')\cdot\p_\tau^{\beta-\tilde\beta+e_2}\p_y^{-1}u}{1+y}\Big\|_{L^2(\Omega)}\leq\big\|\p_\tau^{\tilde\beta}(H\phi')(t)\big\|_{L^\infty(\Omega)}\Big\|\frac{\p_\tau^{\beta-\tilde\beta+e_2}\p_y^{-1}u(t)}{1+y}\Big\|_{L^2(\Omega)}\leq C\|\p_\tau^{\tilde\beta}H(t)\|_{L^\infty(\bT_x)}\|u(t)\|_{\h_0^m},\\
\end{align*}
provided $|\beta-\tilde\beta|\leq|\beta|-1=m-1$. Also, \eqref{normal1} allows us to get that for $\tilde\beta\geq e_i, i=1$ or 2,
\begin{align*}
	\Big\|\frac{\p_\tau^{\tilde\beta}u\cdot\p_\tau^{\beta-\tilde\beta+e_2}\p_y^{-1}h}{1+y}\Big\|_{L^2(\Omega)}=~&\big\|\p_\tau^{\tilde\beta-e_i}(\p_\tau^{e_i}u)\cdot\p_\tau^{\beta-\tilde\beta}\p_y^{-1}(\p_xh)\big\|_{L_{-1}^2(\Omega)}\\
	\leq~ &C\|\p_\tau^{e_i}u(t)\|_{\h_0^{m-1}}\|\p_xh(t)\|_{\h_0^{m-1}}\leq C\|(u, h)(t)\|_{\h_0^m}^2.
	\end{align*}
The other terms in $R_\psi^\beta$ can be estimated similarly,  and we have
\begin{equation}\label{est_r0}
	\Big\|\frac{R_\psi^\beta(t)}{1+y}\Big\|_{L^2(\Omega)}\leq 
	C\big(\sum_{|\beta|\leq m+2}\|\p_\tau^{\beta}(U,H)(t)\|_{L^2(\bT_x)}+\|(u, h)\|_{\h_0^m}\big)\|(u, h)\|_{\h_0^m}.
\end{equation}

Now, combining \eqref{new_qu} with \eqref{psi},
we define  new functions:
\begin{align}\label{new}
u_\beta=\p_\tau^\beta u-\frac{\p_yu+U\phi''}{h+H\phi'}\p_\tau^\beta\psi,\quad h_\beta=\p_\tau^\beta h-\frac{\p_yh+H \phi''}{h+H\phi'}\p_\tau^\beta\psi, 
\end{align}
and denote 
\begin{equation}
	\label{def_eta}
\eta_1~\triangleq~\frac{\p_yu+U\phi''}{h+H\phi'},\quad \eta_2~\triangleq~\frac{\p_yh+H\phi''}{h+H\phi'}.
\end{equation}
Then, by noting that $\p_\tau^\beta g=-\p_x\p_\tau^\beta\psi$  from \eqref{psi}, we compute $(\ref{eq_xm})_1 -(\ref{eq-psi})\times\eta_1$ and $(\ref{eq_xm})_2 -(\ref{eq-psi})\times\eta_2$ respectively, to obtain that 
\begin{align}\label{eq_hu}
\begin{cases}
	\p_tu_\beta +\big[(u+U\phi')\partial_x+(v-U_x\phi)\partial_y\big]u_\beta -\big[(h+H\phi')\partial_x+(g-H_x\phi)\partial_y\big]h_\beta-\mu\p_y^2u_\beta +(\kappa-\mu)\eta_1\p_y h_\beta &=R_1^\beta,\\
\p_th_\beta +\big[(u+U\phi')\partial_x+(v-U_x\phi)\partial_y\big]h_\beta -\big[(h+H\phi')\partial_x+(g-H_x\phi)\partial_y\big]u_\beta-\kappa\p_y^2h_\beta &=R_2^\beta,
\end{cases}
\end{align}
where
\begin{align}\label{def_newr}
\begin{cases}
R_1^\beta&=\p_\tau^\beta r_1-\eta_1\p_\tau^\beta r_3+
R_u^\beta-\eta_1R_\psi^\beta+[2\mu\p_y\eta_1+(g-H_x\phi)\eta_2+(\mu-\kappa)\eta_1\eta_2]\p_\tau^\beta h-\zeta_1\p_\tau^\beta\psi,\\
R_2^\beta&=\p_\tau^\beta r_1-\eta_2\p_\tau^\beta r_2+
R_h^\beta-\eta_2R_\psi^\beta+\big[2\kappa\p_y\eta_2+(g-H_x\phi)\eta_1\big]\p_\tau^\beta h-\zeta_2\p_\tau^\beta\psi,
\end{cases}
\end{align}
with
\begin{align}\label{zeta}
	\zeta_1~&=~\p_t\eta_1+\big[(u+U\phi')\p_x+(v-U_x\phi)\p_y\big]\eta_1-\big[(h+H\phi')\p_x+(g-H_x\phi)\p_y\big]\eta_2-\mu\p_y^2\eta_1+(\kappa-\mu)\eta_1\p_y\eta_2,\nonumber\\
	\zeta_2~&=~\p_t\eta_2+\big[(u+U\phi')\p_x+(v-U_x\phi)\p_y\big]\eta_2-\big[(h+H\phi')\p_x+(g-H_x\phi)\p_y\big]\eta_1-\kappa\p_y^2\eta_2.
\end{align}
Also, direct calculation gives the corresponding initial-boundary values as follows:
\begin{equation}\label{ib_hat}
\begin{cases}
&u_\beta|_{t=0}=\p_\tau^\beta u(0,x,y)-\frac{\p_yu_{0}(x,y)+U(0,x)\phi''(y)}{h_0(x,y)+H(0,x)\phi'(y)}\int_0^y\p_\tau^\beta h(0,x,z)dz\triangleq u_{\beta 0}(x,y),\\
&h_\beta|_{t=0}=\p_\tau^\beta h(0,x,y)-\frac{\p_yh_{0}(x,y)+H(0,x)\phi''(y)}{h_0(x,y)+H(0,x)\phi'(y)}\int_0^y\p_\tau^\beta h(0,x,z)dz\triangleq h_{\beta 0}(x,y),\\
&u_\beta|_{y=0}=0,\quad \p_y h_\beta|_{y=0}=0.	
\end{cases}\end{equation}
Finally, we obtain the initial-boundary value problem for $(u_\beta, h_\beta)$:
\begin{equation}\label{pr_hat}
\begin{cases}
	\p_tu_\beta +\big[(u+U\phi')\partial_x+(v-U_x\phi)\partial_y\big]u_\beta -\big[(h+H\phi')\partial_x+(g-H_x\phi)\partial_y\big]h_\beta-\mu\p_y^2u_\beta+(\kappa-\mu)\eta_1\p_yh_\beta=R_1^\beta,\\
\p_th_\beta +\big[(u+U\phi')\partial_x+(v-U_x\phi)\partial_y\big]h_\beta -\big[(h+H\phi')\partial_x+(g-H_x\phi)\partial_y\big]u_\beta-\kappa\p_y^2h_\beta =R_2^\beta,\\
	( u_\beta,\p_y h_\beta)|_{y=0}=0,\qquad (u_\beta,h_\beta)|_{t=0}=(u_{\beta 0},h_{\beta 0})(x,y),
\end{cases}\end{equation}
with the initial data $(u_{\beta 0},h_{\beta 0})(x,y)$ given by \eqref{ib_hat}.
Moreover, by combining $\psi=\p_y^{-1}h$ with \eqref{normal1}, 
\begin{align}\label{est_psi}
	\|\ly^{-1}\p_\tau^\beta\psi(t)\|_{L^2(\Omega)}\leq 2\|\p_\tau^\beta h(t)\|_{L^2(\Omega)}.
\end{align}
From the expression\eqref{def_eta} of $\eta_1$ and $\eta_2$, by \eqref{priori_ass} and Sobolev embedding inequality we have that for $\lambda\in\bR$ and $i=1,2,$
\begin{align}\label{est_eta}
	\|\ly^\lambda\eta_i\|_{L^\infty(\Omega)}\leq C\delta_0^{-1}\big(\|(U,H)(t)\|_{L^\infty(\bT_x)}+\|(u,h)\|_{\h_{\lambda-1}^3}\big),\nonumber\\
	\|\ly^\lambda\p_y\eta_i\|_{L^\infty(\Omega)}\leq C\delta_0^{-2}\big(\|(U,H)(t)\|_{L^\infty(\bT_x)}+\|(u,h)\|_{\h_{\lambda-1}^4}\big)^2,
\end{align}
and
\begin{align}\label{est_zeta}
	\|\ly^\lambda\zeta_i\|_{L^\infty(\Omega)}\leq C\delta_0^{-3}\big(\sum_{|\beta|\leq1}\|\p_\tau^\beta(U,H)(t)\|_{L^\infty(\bT_x)}+\|(u,h)\|_{\h_{\lambda-1}^5}\big)^3,\qquad i=1,2.
\end{align}
Then, 
for the terms $R_1^\beta$ and $ R_2^\beta$ given by \eqref{def_newr}, from the above inequalities \eqref{est_psi}-\eqref{est_zeta}, the estimates \eqref{est_error-m} and \eqref{est_r0} we obtain that for $|\beta|=m\geq5, l\geq0,$
\begin{equation}
	\label{est-r1}
	\begin{split}
		\|R_1^\beta(t)\|_{L_l^2(\Omega)}\leq~&\|\p_\tau^\beta r_1-\eta_1\p_\tau^\beta r_3\|_{L_l^2(\Omega)}+\|R_u^\beta\|_{L_l^2(\Omega)}+\|\ly^{l+1}\eta_1\|_{L^{\infty}(\Omega)}\|\ly^{-1}R_\psi^\beta\|_{L^2(\Omega)}\\
		&+\big(\big\|2\mu\p_y\eta_1+(\mu-\kappa)\eta_1\eta_2\big\|_{L^\infty(\Omega)}+\big\|\ly^{-1}(g-H_x\phi)\big\|_{L^\infty(\Omega)}\big\|\ly\eta_2\big\|_{L^\infty(\Omega)}\big)\|\p_\tau^\beta h\big\|_{L^2_l(\Omega)}\\
		&+\|\ly^{l+1}\zeta_1\|_{L^\infty(\Omega)}\|\ly^{-1}\p_\tau^\beta\psi\|_{L^2(\Omega)}\\
		\leq~&\|\p_\tau^\beta r_1-\eta_1\p_\tau^\beta r_3\|_{L_l^2(\Omega)}+C\delta_0^{-3}\big(\sum_{|\beta|\leq m+2}\|\p_\tau^{\beta}(U,H)(t)\|_{L^2(\bT_x)}+\|(u,h)\|_{\h_{l}^m}\big)^3\|(u ,h)(t)\|_{\h_l^m},\\
		\end{split}\end{equation}
		and
		\begin{equation}\label{est-r2}\begin{split}
		\|R_2^\beta(t)\|_{L_l^2(\Omega)}\leq~&\|\p_\tau^\beta r_2-\eta_2\p_\tau^\beta r_3\|_{L_l^2(\Omega)}+\|R_h^\beta\|_{L_l^2(\Omega)}+\|\ly^{l+1}\eta_2\|_{L^{\infty}(\Omega)}\|\ly^{-1}R_\psi^\beta\|_{L^2(\Omega)}\\
		&+\big(\big\|2\kappa\p_y\eta_2\big\|_{L^\infty(\Omega)}+\big\|\ly^{-1}(g-H_x\phi)\big\|_{L^\infty(\Omega)}\big\|\ly\eta_1\big\|_{L^\infty(\Omega)}\big)\|\p_\tau^\beta h\big\|_{L^2_l(\Omega)}\\
		&+\|\ly^{l+1}\zeta_2\|_{L^\infty(\Omega)}\|\ly^{-1}\p_\tau^\beta\psi\|_{L^2(\Omega)}\\
		\leq~&\|\p_\tau^\beta r_2-\eta_2\p_\tau^\beta r_3\|_{L_l^2(\Omega)}+C\delta_0^{-3}\big(\sum_{|\beta|\leq m+2}\|\p_\tau^{\beta}(U,H)(t)\|_{L^2(\bT_x)}+\|(u,h)\|_{\h_{l}^m}\big)^3\|(u ,h)(t)\|_{\h_l^m}.
	\end{split}
\end{equation}

Now, we are going to derive the following $L^2_l$-norms of $(u_\beta,h_\beta)$.
\begin{prop}\label{prop_xm}[\textit{$L^2_l-$estimate on $(u_\beta,h_\beta)$}]\\
	Under the hypotheses of Proposition \ref{prop_priori},  we have that for any $t\in[0,T]$ and the quantity $(u_\beta,h_\beta)$ given in \eqref{new},
	\begin{align}\label{est_hat}
		&\sum_{|\beta|=m}\Big(\frac{d}{dt}\|(u_\beta, h_\beta)(t)\|_{L^2_l(\mathbb{R}^2_+)}^2
		+\mu\|\p_y u_\beta(t)\|_{L^2_l(\Omega)}^2+\kappa\|\p_y h_\beta(t)\|_{L^2_l(\Omega)}^2\Big)\nonumber\\
		\leq~&\sum_{|\beta|=m}\Big(\|\p_\tau^\beta r_1-\eta_1\p_\tau^\beta r_3\|_{L_l^2(\Omega)}^2+\|\p_\tau^\beta r_2-\eta_2\p_\tau^\beta r_3\|_{L_l^2(\Omega)}^2\Big)\nonumber\\
	&+C\delta_0^{-2}\big(\sum_{|\beta|\leq m+2}\|\p_\tau^{\beta}(U,H)(t)\|_{L^2(\bT_x)}+\|(u,h)(t)\|_{\h_l^m}\big)^2\Big(\sum_{|\beta|=m}\|(u_\beta, h_\beta)(t)\|_{L^2_l(\Omega)}^2\Big)\nonumber\\
	&+C\delta_0^{-4}\big(\sum_{|\beta|\leq m+2}\|\p_\tau^{\beta}(U,H)(t)\|_{L^2(\bT_x)}+\|(u,h)\|_{\h_{l}^m}\big)^4\|(u ,h)(t)\|_{\h_l^m}^2.
	\end{align}
\end{prop}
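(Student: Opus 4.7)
The plan is to perform a weighted $L^2_l$ energy estimate on the coupled system \eqref{pr_hat} for $(u_\beta,h_\beta)$, exploiting the symmetric (Alfv\'en) structure that causes the dangerous cross-convection terms to cancel. Concretely, I would test the first equation of \eqref{pr_hat} against $\langle y\rangle^{2l}u_\beta$ and the second against $\langle y\rangle^{2l}h_\beta$, integrate over $\Omega$, and add them. This reduces the task to controlling four groups of terms: (i) the two transport groups, (ii) the viscous/resistive terms, (iii) the asymmetric lower-order term $(\kappa-\mu)\eta_1\p_yh_\beta$, and (iv) the source terms $R_1^\beta,R_2^\beta$.

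For group (i), the key observation is the same cancellation used in MHD: summing the two cross-convection contributions gives
$-\int_\Omega\langle y\rangle^{2l}\big[(h+H\phi')\p_x+(g-H_x\phi)\p_y\big](u_\beta h_\beta)\,dxdy$,
and an integration by parts together with $\p_x(h+H\phi')+\p_y(g-H_x\phi)=0$ and the boundary condition $(g-H_x\phi)|_{y=0}=0$ leaves only the weight-derivative remainder $2l\int_\Omega\langle y\rangle^{2l-1}(g-H_x\phi)u_\beta h_\beta\,dxdy$, which is controlled by $\|\langle y\rangle^{-1}(g-H_x\phi)\|_{L^\infty}\lesssim\|h\|_{\mathcal{H}^3_0}+\|H_x\|_{L^\infty(\bT_x)}$ via \eqref{normal1} and Sobolev embedding. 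The self-convection terms are handled symmetrically: they combine into $\tfrac12\int_\Omega\langle y\rangle^{2l}[(u+U\phi')\p_x+(v-U_x\phi)\p_y](u_\beta^2+h_\beta^2)$, which after IBP leaves a similar weighted remainder controlled by $\|\langle y\rangle^{-1}(v-U_x\phi)\|_{L^\infty}$.

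For group (ii), integration by parts in $y$ produces the dissipation $-\mu\|\p_yu_\beta\|_{L^2_l}^2-\kappa\|\p_yh_\beta\|_{L^2_l}^2$ plus the weight-derivative terms $2l\mu\int\langle y\rangle^{2l-1}\p_yu_\beta\cdot u_\beta$ and its analogue for $h_\beta$, which are absorbed by $\tfrac{\mu}{4}\|\p_yu_\beta\|_{L^2_l}^2$ and the corresponding $\kappa$ piece after Cauchy--Schwarz, contributing only $C\|(u_\beta,h_\beta)\|_{L^2_l}^2$. Here the boundary terms at $y=0$ vanish exactly because of the boundary conditions $u_\beta|_{y=0}=0$ and $\p_yh_\beta|_{y=0}=0$ built into \eqref{pr_hat} --- this is precisely why the unknowns $(u_\beta,h_\beta)$ were chosen with those traces. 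For group (iii), Cauchy--Schwarz gives
\[
\Big|(\kappa-\mu)\!\int_\Omega\langle y\rangle^{2l}\eta_1\p_yh_\beta\cdot u_\beta\,dxdy\Big|\leq\tfrac{\kappa}{4}\|\p_yh_\beta\|_{L^2_l}^2+C\|\eta_1\|_{L^\infty}^2\|u_\beta\|_{L^2_l}^2,
\]
and $\|\eta_1\|_{L^\infty}\leq C\delta_0^{-1}(\|(U,H)\|_{L^\infty(\bT_x)}+\|(u,h)\|_{\mathcal{H}^3_0})$ by \eqref{est_eta}. Finally, group (iv) is handled by $\|R_i^\beta\|_{L^2_l}\|u_\beta\text{ or }h_\beta\|_{L^2_l}\leq\tfrac12\|R_i^\beta\|_{L^2_l}^2+\tfrac12\|(u_\beta,h_\beta)\|_{L^2_l}^2$, after which \eqref{est-r1} and \eqref{est-r2} split the right-hand side into the pure source part $\|\p_\tau^\beta r_1-\eta_1\p_\tau^\beta r_3\|_{L^2_l}^2+\|\p_\tau^\beta r_2-\eta_2\p_\tau^\beta r_3\|_{L^2_l}^2$ and a $\delta_0^{-4}$-weighted polynomial in $M_0$ and $\|(u,h)\|_{\mathcal{H}^m_l}$.

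The only delicate point I foresee is the vanishing of the $y=+\infty$ boundary contributions when integrating by parts against the weight $\langle y\rangle^{2l}$; since $(u_\beta,h_\beta)\in L^2_l$ and $(v-U_x\phi,g-H_x\phi)$ has at most linear growth cut off by $\phi$, this is justified by the density argument already implicit in the hypothesis $(u,h)\in L^\infty(0,T;\mathcal{H}^m_l)$. Summing over $|\beta|=m$, collecting the absorbed dissipation on the left, and regrouping the various polynomial bounds (where any $C$ depending only on $m,l,\phi$ is absorbed into the constants) produces exactly \eqref{est_hat}.
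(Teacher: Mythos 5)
Your proposal is correct and follows essentially the same route as the paper: the same weighted $L^2_l$ energy identity, the same cancellation of the cross-convection terms via the divergence-free structure and the boundary conditions $u_\beta|_{y=0}=\p_yh_\beta|_{y=0}=(v,g)|_{y=0}=0$, and the same use of \eqref{est_eta}, \eqref{est-r1} and \eqref{est-r2} to close the estimate. The only (immaterial) deviation is in the term $(\kappa-\mu)\int\ly^{2l}\eta_1\p_yh_\beta\,u_\beta$, which you absorb by a direct Cauchy--Schwarz into $\tfrac{\kappa}{4}\|\p_yh_\beta\|_{L^2_l}^2$, whereas the paper first integrates the $\mu$-part by parts (at the cost of a $\|\p_y\eta_1\|_{L^\infty}$ factor) before absorbing; both yield the same $C\delta_0^{-2}(\cdots)^2\|(u_\beta,h_\beta)\|_{L^2_l}^2$ contribution.
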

\begin{proof}[\textbf{Proof.}]
Multiplying $(\ref{pr_hat})_1$ and $\eqref{pr_hat}_2$ by $\langle y\rangle^{2l}u_\beta$ and $\langle y\rangle^{2l}h_\beta$ respectively, and integrating them over $\Omega$ with $t\in[0,T]$, we obtain that by integration by parts,
\begin{align}\label{est_m}
&\frac{1}{2}\frac{d}{dt}\|(u_\beta,h_\beta)(t)\|_{L^2_l(\mathbb{R}^2_+)}^2
+\mu\|\p_y u_\beta\|_{L^2_l(\Omega)}^2+\kappa\|\p_y h_\beta\|_{L^2_l(\Omega)}^2\nonumber\\
=~&2l\int_{\Omega}\ly^{2l-1}\big[(v-U_x\phi)\frac{u_\beta^2+h_\beta^2}{2}-(g-H_x\phi)u_\beta h_\beta\big]dxdy+(\mu-\kappa)\int_{\Omega}\ly^{2l}\big(\eta_1\p_yh_\beta\cdot u_\beta\big)dxdy\nonumber\\
&+\int_{\Omega}\ly^{2l}\big(u_\beta R^\beta_1+h_\beta R^\beta_2\big)dxdy
-2l\int_{\Omega}\ly^{2l-1}\big(\mu u_\beta\p_y u_\beta+\kappa h_\beta\p_y h_\beta\big)dxdy,
\end{align}
where we have used the boundary conditions in \eqref{pr_hat} and $(v,g)|_{y=0}=0.$

By  \eqref{normal}, it gives that
\begin{align}\label{est_m0}
&\Big|2l\int_{\Omega}\ly^{2l-1}\big[(v-U_x\phi)\frac{u_\beta^2+h_\beta^2}{2}-(g-H_x\phi)u_\beta h_\beta\big]dxdy\Big|\nonumber\\
\leq~&2l\Big(\Big\|\frac{v-U_x\phi}{1+y}\Big\|_{L^\infty(\Omega)}+\Big\|\frac{g-H_x\phi}{1+y}\Big\|_{L^\infty(\Omega)}\Big)\|(u_\beta, h_\beta)\|_{L_l^2(\Omega)}^2\nonumber\\
\leq~&2l\big(\|(U_x,H_x)(t)\|_{L^\infty(\bT_x)}+\|u_x(t)\|_{L^\infty(\Omega)}+\|h_x(t)\|_{L^\infty(\Omega)}\big)\|(u_\beta, h_\beta)(t)\|_{L_l^2(\Omega)}^2\nonumber\\
\leq~&C\big(\|(U_x,H_x)(t)\|_{L^\infty(\bT_x)}+\|(u, h)(t)\|_{\h_l^m}\big)\|(u_\beta, h_\beta)(t)\|_{L_l^2(\Omega)}^2.
\end{align}
By integration by parts and the boundary condition $u_\beta|_{y=0}=0$, we obtain that
\begin{align}\label{est_m1}
	&(\mu-\kappa)\int_{\Omega}\ly^{2l}\big(\eta_1\p_yh_\beta\cdot u_\beta\big)dxdy\nonumber\\
	=&-\mu\int_{\Omega}h_\beta\p_y\big(\ly^{2l}\eta_1u_\beta\big)dxdy-\kappa \int_{\Omega}\ly^{2l}\big(\eta_1\p_yh_\beta\cdot u_\beta\big)dxdy\nonumber\\
	\leq~&\frac{\mu}{4}\|\p_y u_\beta(t)\|_{L^2_l(\Omega)}^2+\frac{\kappa}{4}\|\p_y h_\beta(t)\|_{L^2_l(\Omega)}^2+C\big(1+\|\eta_1(t)\|_{L^\infty(\Omega)}^2+\|\p_y\eta_1(t)\|_{L^\infty(\Omega)}\big)\|(u_\beta, h_\beta)(t)\|_{L^2_l(\Omega)}^2\nonumber\\
	\leq~&\frac{\mu}{4}\|\p_y u_\beta(t)\|_{L^2_l(\Omega)}^2+\frac{\kappa}{4}\|\p_y h_\beta(t)\|_{L^2_l(\Omega)}^2+C\delta_0^{-2}\big(\|(U,H)(t)\|_{L^\infty(\bT_x)}+\|(u,h)(t)\|_{\h_l^m}\big)^2\|(u_\beta, h_\beta)(t)\|_{L^2_l(\Omega)}^2,
\end{align}
where we have used \eqref{est_eta} in the above second inequality.

Next, it is easy to get that by  \eqref{est-r1} and \eqref{est-r2},
\begin{align}\label{est_m2}
	\int_{\Omega}\ly^{2l}\big(u_\beta R^\beta_1+h_\beta R^\beta_2\big)dxdy
	\leq~&\|u_\beta(t)\|_{L_l^2(\Omega)}\|R_1^\beta(t)\|_{L_l^2(\Omega)}+\|h_\beta(t)\|_{L_l^2(\Omega)}\|R_2^\beta(t)\|_{L_l^2(\Omega)}\nonumber\\
	\leq~&\|\p_\tau^\beta r_1-\eta_1\p_\tau^\beta r_3\|_{L_l^2(\Omega)}^2+\|\p_\tau^\beta r_2-\eta_2\p_\tau^\beta r_3\|_{L_l^2(\Omega)}^2\nonumber\\
	&+C\delta_0^{-2}\big(\sum_{|\beta|\leq m+2}\|\p_\tau^{\beta}(U,H)(t)\|_{L^2(\bT_x)}+\|(u,h)(t)\|_{\h_l^m}\big)^2\|(u_\beta, h_\beta)(t)\|_{L^2_l(\Omega)}^2\nonumber\\
	&+C\delta_0^{-4}\big(\sum_{|\beta|\leq m+2}\|\p_\tau^{\beta}(U,H)(t)\|_{L^2(\bT_x)}+\|(u,h)\|_{\h_{l}^m}\big)^4\|(u ,h)(t)\|_{\h_l^m}^2.
\end{align}
Also,
\begin{align}\label{est_m3}
&\Big|2l\int_{\Omega}\ly^{2l-1}\big(\mu u_\beta\p_y u_\beta+\kappa h_\beta\p_y h_\beta\big)dxdy\Big|\nonumber\\
\leq~&\frac{\mu}{4}\|\p_y u_\beta(t)\|_{L^2_l(\Omega)}^2+\frac{\kappa}{4}\|\p_y h_\beta(t)\|_{L^2_l(\Omega)}^2+C\|(u_\beta, h_\beta)(t)\|_{L^2_l(\Omega)}^2.
\end{align}

Substituting \eqref{est_m0}-\eqref{est_m3} into \eqref{est_m} yields that
\begin{align}\label{est-m}
		&\frac{d}{dt}\|(u_\beta,h_\beta)(t)\|_{L^2_l(\mathbb{R}^2_+)}^2
+\mu\|\p_y u_\beta\|_{L^2_l(\Omega)}^2+\kappa\|\p_y h_\beta\|_{L^2_l(\Omega)}^2\nonumber\\
\leq~&\|\p_\tau^\beta r_1-\eta_1\p_\tau^\beta r_3\|_{L_l^2(\Omega)}^2+\|\p_\tau^\beta r_2-\eta_2\p_\tau^\beta r_3\|_{L_l^2(\Omega)}^2\nonumber\\
	&+C\delta_0^{-2}\big(\sum_{|\beta|\leq m+2}\|\p_\tau^{\beta}(U,H)(t)\|_{L^2(\bT_x)}+\|(u,h)(t)\|_{\h_l^m}\big)^2\|(u_\beta, h_\beta)(t)\|_{L^2_l(\Omega)}^2\nonumber\\
	&+C\delta_0^{-4}\big(\sum_{|\beta|\leq m+2}\|\p_\tau^{\beta}(U,H)(t)\|_{L^2(\bT_x)}+\|(u,h)\|_{\h_{l}^m}\big)^4\|(u ,h)(t)\|_{\h_l^m}^2,
\end{align}
thus we prove \eqref{est_hat} by taking the summation over all $|\beta|=m$ in \eqref{est-m}. 
\end{proof}

Finally, we give the following result, which shows the almost equivalence in $L_l^2-$norm between $\p_\tau^\beta(u,h)$ and the quantities $(u_\beta,h_\beta)$ given by \eqref{new}.

\begin{lem}\label{lem_equ}[\textit{Equivalence between  $\|\p_\tau^\beta(u,h)\|_{L_l^2}$ and $\|(u_\beta,h_\beta)\|_{L_l^2}$}]\\
	If the smooth function $(u,h)$ satisfies the problem \eqref{bl_main} in $[0,T]$, and \eqref{priori_ass} holds, then for any $t\in[0,T], l\geq0,$ aninteger $m\geq3$ and the quantity $(u_\beta,h_\beta)$ with $|\beta|=m$ defined by \eqref{new}, we have
	\begin{equation}\label{equi}
		M(t)^{-1}\|\p_\tau^\beta (u, h)(t)\|_{L_l^2(\Omega)}~\leq~\|(u_\beta,h_\beta)(t)\|_{L_l^2(\Omega)}~\leq~M(t)\|\p_\tau^\beta(u, h)(t)\|_{L_l^2(\Omega)},
	\end{equation}
	and
	\begin{equation}
		\label{equi_y1}
		\big\|\p_y\p_\tau^\beta (u, h)(t)\big\|_{L_l^2(\Omega)}
	\leq\|\p_y (u_\beta,  h_\beta)(t)\|_{L_l^2(\Omega)}+M(t)\|h_\beta(t)\|_{L_l^2(\Omega)},
	\end{equation}
	where
	\begin{equation}\label{def_M}
M(t)~:=~2\delta_0^{-1}\Big(C\|(U,H)(t)\|_{L^\infty(\bT_x)}+\big\|\ly^{l+1}\p_y(u, h)(t)\big\|_{L^\infty(\Omega)}+\big\|\ly^{l+1}\p_y^2(u, h)(t)\big\|_{L^\infty(\Omega)}\Big).
\end{equation}
\end{lem}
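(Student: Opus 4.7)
The proof hinges on the algebraic observation $\eta_2 = (\p_yh+H\phi'')/(h+H\phi') = \p_y\log(h+H\phi')$. Since $\p_\tau^\beta\psi=\p_y^{-1}\p_\tau^\beta h$ vanishes at $y=0$ and, by the very definition of $h_\beta$, satisfies the first order ODE $\p_y(\p_\tau^\beta\psi)-\eta_2\,\p_\tau^\beta\psi = h_\beta$, the integrating factor $(h+H\phi')^{-1}$ integrates it explicitly:
\[
\p_\tau^\beta\psi(x,y) \;=\; (h+H\phi')(x,y)\int_0^y \frac{h_\beta(x,z)}{(h+H\phi')(x,z)}\,\dif z.
\]
When multiplied by $\eta_i$, the factor $(h+H\phi')$ obtained above cancels the denominator in $\eta_i$, producing the pointwise bound
\[
|\eta_1\p_\tau^\beta\psi|(x,y)\;\leq\; \delta_0^{-1}\bigl(|\p_yu|+|U||\phi''|\bigr)(x,y)\int_0^y|h_\beta|(x,z)\,\dif z,
\]
and analogously for $\eta_2\p_\tau^\beta\psi$ with $(u,U)$ replaced by $(h,H)$. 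Combining with the Hardy-type inequality \eqref{normal1} applied to the inner integral, this gives the key estimate
\[
\|\eta_i\p_\tau^\beta\psi(t)\|_{L^2_l(\Omega)}\;\leq\; C\delta_0^{-1}\bigl(\|(U,H)(t)\|_{L^\infty(\bT_x)}+\|\ly^{l+1}\p_y(u,h)(t)\|_{L^\infty(\Omega)}\bigr)\|h_\beta(t)\|_{L^2_l(\Omega)},
\]
which is controlled by $\tfrac12 M(t)\|h_\beta\|_{L^2_l}$.

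For the equivalence \eqref{equi}, the upper bound $\|(u_\beta,h_\beta)\|_{L^2_l}\leq M(t)\|\p_\tau^\beta(u,h)\|_{L^2_l}$ follows from the cheap $L^\infty\times L^2$ estimate $\|\eta_i\p_\tau^\beta\psi\|_{L^2_l}\leq \|\ly^{l+1}\eta_i\|_{L^\infty}\|\ly^{-1}\p_\tau^\beta\psi\|_{L^2}\leq M(t)\|\p_\tau^\beta h\|_{L^2_l}$ via \eqref{est_psi}. For the reverse bound, the same naive estimate would only give $\|\p_\tau^\beta h\|_{L^2_l}\leq \|h_\beta\|_{L^2_l}+M(t)\|\p_\tau^\beta h\|_{L^2_l}$, which cannot be absorbed since $M(t)$ is generically $\gtrsim \delta_0^{-2}$; this is the principal obstacle, and the refined ODE-based estimate of the first paragraph is precisely what bypasses it. Indeed, that estimate gives $\|\eta_2\p_\tau^\beta\psi\|_{L^2_l}\leq M(t)\|h_\beta\|_{L^2_l}$ without feeding back $\p_\tau^\beta h$, so that $\|\p_\tau^\beta h\|_{L^2_l}\leq (1+M(t))\|h_\beta\|_{L^2_l}$ and then $\|\p_\tau^\beta u\|_{L^2_l}\leq \|u_\beta\|_{L^2_l}+M(t)\|h_\beta\|_{L^2_l}$, yielding \eqref{equi}.

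For the $y$-derivative bound \eqref{equi_y1}, differentiating $\p_\tau^\beta u = u_\beta + \eta_1\p_\tau^\beta\psi$ in $y$ and substituting $\p_y\p_\tau^\beta\psi = \p_\tau^\beta h = h_\beta + \eta_2\p_\tau^\beta\psi$ gives
\[
\p_y\p_\tau^\beta u \;=\; \p_yu_\beta + \eta_1 h_\beta + (\p_y\eta_1+\eta_1\eta_2)\p_\tau^\beta\psi.
\]
A direct computation reveals the clean cancellation
\[
\p_y\eta_1+\eta_1\eta_2 \;=\; \frac{\p_y^2u+U\phi'''}{h+H\phi'},
\]
which has exactly the same structural form as $\eta_1$, just with $(\p_yu,\phi'')$ replaced by $(\p_y^2u,\phi''')$. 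Hence the ODE-based estimate of the first paragraph applies verbatim, producing $\|(\p_y\eta_1+\eta_1\eta_2)\p_\tau^\beta\psi\|_{L^2_l}\leq \tfrac12 M(t)\|h_\beta\|_{L^2_l}$ thanks to the $\|\ly^{l+1}\p_y^2(u,h)\|_{L^\infty}$ term built into $M(t)$. Combined with the trivial bound $\|\eta_1 h_\beta\|_{L^2_l}\leq \|\eta_1\|_{L^\infty}\|h_\beta\|_{L^2_l}\leq \tfrac12 M(t)\|h_\beta\|_{L^2_l}$ and the symmetric calculation for $\p_y\p_\tau^\beta h$ (using $\p_y\eta_2+\eta_2^2 = (\p_y^2h+H\phi''')/(h+H\phi')$), this completes the proof of \eqref{equi_y1}.
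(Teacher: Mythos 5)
Your proposal is correct and follows essentially the same route as the paper: the central step in both is the integrating-factor identity $\p_\tau^\beta\psi=(h+H\phi')\int_0^y h_\beta/(h+H\phi')\,\dif z$ (the paper's \eqref{def_psi}), followed by weighted $L^\infty\times L^2$ estimates and the Hardy inequality \eqref{normal1}. Your cancellation $\p_y\eta_1+\eta_1\eta_2=(\p_y^2u+U\phi^{(3)})/(h+H\phi')$ is just the differentiated form of the paper's formula \eqref{for_m}, so the two arguments coincide term by term.
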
 
\begin{proof}[\textbf{Proof.}]
Firstly, from the definitions of $u_\beta$ and $h_\beta$ in \eqref{new}, we have by using \eqref{est_psi}, 
\begin{equation*}
	\label{equivalent}\begin{split}
		\|u_\beta(t)\|_{L^2_l(\Omega)}\leq~&\|\p_\tau^\beta u(t)\|_{L^2_l(\Omega)}+
		\|\ly^{l+1}\eta_1(t)\|_{L^\infty(\Omega)}\|\ly^{-1}\p_\tau^\beta\psi(t)\|_{L^2(\Omega)}\\
		\leq~&\|\p_\tau^\beta u(t)\|_{L^2_l(\Omega)}+2\delta_0^{-1}\big(C\|U(t)\|_{L^\infty(\bT_x)}+\|\ly^{l+1}\p_yu(t)\|_{L^\infty(\Omega)}\big)\|\p_\tau^\beta h(t)\|_{L^2(\Omega)},
		\end{split}\end{equation*}
		and
		\begin{align*}
			\|h_\beta(t)\|_{L^2_l(\Omega)}\leq~&\|\p_\tau^\beta h(t)\|_{L^2_l(\Omega)}+
		\|\ly^{l+1}\eta_2(t)\|_{L^\infty(\Omega)}\|\ly^{-1}\p_\tau^\beta\psi(t)\|_{L^2(\Omega)}\\
		\leq~&2\delta_0^{-1}\big(C\|H(t)\|_{L^\infty(\bT_x)}+\|\ly^{l+1}\p_yh(t)\|_{L^\infty(\Omega)}\big)\|\p_\tau^\beta h(t)\|_{L_l^2(\Omega)}.
		\end{align*}
Thus, 
we have that by \eqref{def_M},
\begin{equation}
	\label{equ_1}\begin{split}
	\|(u_\beta,h_\beta)(t)\|_{L_l^2(\Omega)}~&\leq~
	M(t)\big\|\p_\tau^\beta (u, h)(t)\big\|_{L_l^2(\Omega)}.
	\end{split}\end{equation}
On other hand, note that from $\p_y\psi=h$ and the expression of $h_\beta$ in \eqref{new},
\[h_\beta~=~\p_\tau^\beta h-\frac{\p_yh+H\phi''}{h+H\phi'}\p_\tau^\beta\psi~=~(h+H\phi')\cdot\p_y\Big(\frac{\p_\tau^\beta\psi}{h+H\phi'}\Big),\]
which implies that by  $\p_\tau^\beta\psi|_{y=0}=0,$
\begin{equation}
	\label{def_psi}
	\p_\tau^\beta\psi(t,x,y)=\big(h(t,x,y)+H(t,x)\phi'(y)\big)\cdot\int_0^y\frac{h_\beta(t,x,z)}{h(t,x,z)+H(t,x)\phi'(z)}dz.
\end{equation}
Therefore, combining the definition \eqref{new} for 
$(u_\beta, h_\beta)$ with \eqref{def_psi}, we have
\begin{equation}\label{for_m}
\begin{cases}
	\p_\tau^\beta u(t,x,y)=u_\beta(t,x,y)+\big(\p_yu(t,x,y)+U(t,x)\phi''(y)\big)\cdot\int_0^y\frac{h_\beta(t,x,z)}{h(t,x,z)+H(t,x)\phi'(z)}dz,\\
	\p_\tau^\beta h(t,x,y)=h_\beta(t,x,y)+\big(\p_yh(t,x,y)+H(t,x)\phi''(y)\big)\cdot\int_0^y\frac{h_\beta(t,x,z)}{h(t,x,z)+H(t,x)\phi'(z)}dz.
	\end{cases}\end{equation}
Then, by using \eqref{normal1},
\[\begin{split}
	\|\p_\tau^\beta u(t)\|_{L^2_l(\Omega)}
	\leq~&\|u_\beta(t)\|_{L_l^2(\Omega)}+\big\|\ly^{l+1}\big(\p_yu+U\phi'')(t)\big\|_{L^\infty(\Omega)}\Big\|\frac{1}{1+y}\int_0^y\frac{h_\beta(t,x,z)}{h(t,x,z)+H(t,x)\phi'(z)}dz\Big\|_{L^2(\Omega)}\\
	\leq~&\|u_\beta(t)\|_{L_l^2(\Omega)}+2\big(C\|U(t)\|_{L^\infty(\bT_x)}+\|\ly^{l+1}\p_yu(t)\|_{L^\infty(\Omega)}\big)\Big\|\frac{h_\beta}{h+H\phi'}\Big\|_{L^2(\Omega)}\\
	\leq~&\|u_\beta(t)\|_{L_l^2(\Omega)}+2\delta_0^{-1}\big(C\|U(t)\|_{L^\infty(\bT_x)}+\|\ly^{l+1}\p_yu(t)\|_{L^\infty(\Omega)}\big)\|h_\beta(t)\|_{L^2(\Omega)},
	\end{split}\]
	and similarly,
	\[\begin{split}
	\|\p_\tau^\beta h(t)\|_{L^2_l(\Omega)}
	\leq~&2\delta_0^{-1}\big(C\|H(t)\|_{L^\infty(\bT_x)}+\|\ly^{l+1}\p_yh(t)\|_{L^\infty(\Omega)}\big)\|h_\beta(t)\|_{L^2_l(\Omega)},
	\end{split}\]
which implies that, 
\begin{equation}
	\label{equ_2}
	\|\p_\tau^\beta (u, h)(t)\|_{L_l^2(\Omega)}~\leq~
	M(t)\|(u_\beta, h_\beta)(t)\|_{L_l^2(\Omega)},
\end{equation}
provided that $M(t)$ is given in \eqref{def_M}.
Thus, combining \eqref{equ_1} with \eqref{equ_2} yields \eqref{equi}.

Furthermore,  by taking the derivation of \eqref{for_m} in $y$, we get the following forms of $\p_y\p_\tau^\beta u$ and $\p_y\p_\tau^\beta h$:
\begin{align*}\begin{cases}
	\p_y\p_\tau^\beta u(t,x,y)=&\p_y u_\beta(t,x,y)+\eta_1(t,x,y) h_\beta(t,x,y)+\big(\p_y^2u(t,x,y)+U(t,x)\phi^{(3)}(y)\big)\cdot\int_0^y\frac{h_\beta(t,x,z)}{h(t,x,z)+H(t,x)\phi'(z)}dz,\\
	\p_y\p_\tau^\beta h(t,x,y)=&\p_y h_\beta(t,x,y)+\eta_2(t,x,y) h_\beta(t,x,y)+\big(\p_y^2h(t,x,y)+H(t,x)\phi^{(3)}(y)\big)\cdot\int_0^y\frac{h_\beta(t,x,z)}{h(t,x,z)+H(t,x)\phi'(z)}dz.
	\end{cases}\end{align*}
Then, it follows that by \eqref{normal1} and \eqref{est_eta},
\begin{align*}
	\|\p_y\p_\tau^\beta u(t)\|_{L_l^2(\Omega)}
	\leq~&\|\p_y u_\beta(t)\|_{L_l^2(\Omega)}+\|\eta_1(t)\|_{L^\infty(\Omega)}\|h_\beta(t)\|_{L_l^2(\Omega)}\\
	&+\big\|\ly^{l+1}(\p_y^2u+U\phi^{(3)})(t)\big\|_{L^\infty(\Omega)}\Big\|\frac{1}{1+y}\int_0^y\frac{h_\beta(t,x,z)}{h(t,x,z)+H(t,x)\phi'(z)}dz\Big\|_{L^2(\Omega)}\\
	\leq~&\|\p_yu_\beta(t)\|_{L_l^2(\Omega)}+\delta_0^{-1}\big(C\|U(t)\|_{L^\infty(\bT_x)}+\|\p_yu(t)\|_{L^\infty(\Omega)}\big)\|h_\beta(t)\|_{L_l^2(\Omega)}\\
	&+2\big(C\|U(t)\|_{L^\infty(\bT_x)}+\|\ly^{l+1}\p_y^2u(t)\|_{L^\infty(\Omega)}\big)\Big\|\frac{h_\beta}{h+H\phi'}\Big\|_{L^2(\Omega)}\\
	\leq~&\|\p_yu_\beta(t)\|_{L_l^2(\Omega)}+
	M(t)\|h_\beta(t)\|_{L_l^2(\Omega)},
\end{align*}
and similarly,
\begin{align*}
	\|\p_y\p_\tau^\beta h(t)\|_{L_l^2(\Omega)}
	\leq~&\|\p_y h_\beta(t)\|_{L_l^2(\Omega)}+
	M(t)\|h_\beta(t)\|_{L_l^2(\Omega)}.
\end{align*}
Combining the above two inequalities yields that by  \eqref{def_M},
\begin{align*}
	\|\p_y\p_\tau^\beta (u, h)(t)\|_{L_l^2(\Omega)}
	\leq~&\|\p_y (u_\beta, h_\beta)(t)\|_{L_l^2(\Omega)}+M(t)\|h_\beta(t)\|_{L_l^2(\Omega)}.
\end{align*}
Thus we obtain \eqref{equi_y1} and this completes the proof.
\end{proof}

\subsection{Closeness of the a priori estimates}
\indent\newline
In this subsection, we will prove Proposition \ref{prop_priori}. Before that, we need some preliminaries. First of all, as we know that from \eqref{ass_h},
\begin{align*}
	\big\|\ly^{l+1}\p_y^i(u, h)(t)\big\|_{L^\infty(\Omega)}\leq \delta_0^{-1},\quad\mbox{for}\quad i=1,2,\quad t\in[0,T],
\end{align*}	
combining with the definitions \eqref{def_eta} for $\eta_i, i=1,2$, and \eqref{def_M} for $M(t)$, it implies that for $\delta_0$ sufficiently small,
 \begin{align}\label{bound_eta}
 \|\ly^{l+1}\eta_i(t)\|_{L^\infty(\Omega)}\leq2\delta_0^{-2},\quad M(t)\leq2\delta_0^{-1}\big(C\|(U,H)(t)\|_{L^\infty(\bT_x)}+2\delta_0^{-1}\big)\leq 5\delta^{-2}_0,\quad i=1,2.
 \end{align} 	
Then, recall that $D^\alpha=\p_\tau^\beta\p_y^k$, we obtain that by \eqref{equi} and \eqref{equi_y1} given in Lemma \ref{lem_equ},
\begin{align*}
	\|(u, h)(t)\|_{\h_l^m}^2=&\sum_{\substack{|\alpha|\leq m\\|\beta|\leq m-1}}\|D^\alpha(u, h)(t)\|_{L_l^2(\Omega)}^2+\sum_{|\beta|=m}\|\p_\tau^\beta(u, h)(t)\|_{L_l^2(\Omega)}^2\nonumber\\
	\leq&\sum_{\substack{|\alpha|\leq m\\|\beta|\leq m-1}}\|D^\alpha(u, h)(t)\|_{L_l^2(\Omega)}^2+25\delta_0^{-4}\sum_{|\beta|=m}\|(u_\beta,h_\beta)(t)\|_{L_l^2(\Omega)}^2,
\end{align*}
and
\begin{align*}
	\|\p_y (u, h)(t)\|_{\h_l^m}^2=&\sum_{\substack{|\alpha|\leq m\\|\beta|\leq m-1}}\| D^\alpha\p_y (u, h)(t)\|_{L_l^2(\Omega)}^2+\sum_{|\beta|=m}\|\p_y\p_\tau^\beta (u, h)(t)\|_{L_l^2(\Omega)}^2\nonumber\\
	\leq&\sum_{\substack{|\alpha|\leq m\\|\beta|\leq m-1}}\| D^\alpha \p_y(u, h)(t)\|_{L_l^2(\Omega)}^2+2\sum_{|\beta|=m}\|\p_y (u_\beta, h_\beta)(t)\|_{L_l^2(\Omega)}^2+50\delta_0^{-4}\sum_{|\beta|=m}\|h_\beta (t)\|_{L^2_l(\Omega)}^2.
\end{align*}
Consequently,  we have the following 
\begin{cor}
	\label{cor_equi}
	Under the assumptions of Proposition \ref{prop_priori}, for any $t\in[0,T]$ and the quantity $(u_\beta, h_\beta), |\beta|=m$ given by \eqref{new}, it holds that
	\begin{align}\label{equ_m0}
	\|(u, h)(t)\|_{\h_l^m}^2
	\leq&\sum_{\substack{|\alpha|\leq m\\|\beta|\leq m-1}}\|D^\alpha(u, h)(t)\|_{L_l^2(\Omega)}^2+25\delta_0^{-4}\sum_{|\beta|=m}\|(u_\beta, h_\beta)(t)\|_{L_l^2(\Omega)}^2,
\end{align}
and
\begin{align}
	\label{equ_ym}
	\|\p_y(u, h)(t)\|_{\h_l^m}^2	\leq&\sum_{\substack{|\alpha|\leq m\\|\beta|\leq m-1}}\| D^\alpha\p_y(u, h)(t)\|_{L_l^2(\Omega)}^2+2\sum_{|\beta|=m}\|\p_y(u_\beta, h_\beta)(t)\|_{L_l^2(\Omega)}^2+50\delta_0^{-4}\sum_{|\beta|=m}\|h_\beta (t)\|_{L^2_l(\Omega)}^2.
\end{align}
\end{cor}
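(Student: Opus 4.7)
The plan is to derive Corollary \ref{cor_equi} as an immediate consequence of Lemma \ref{lem_equ}, once the bounds on $\eta_i$ and $M(t)$ have been installed from the a priori assumption \eqref{ass_h}. Indeed, the substantive work has already been done upstream, so this corollary is essentially a bookkeeping step: it packages the pointwise equivalence between $\p_\tau^\beta(u,h)$ and $(u_\beta,h_\beta)$ into a clean splitting of the full $\h_l^m$ norm between mixed derivatives (handled directly) and purely tangential top-order derivatives (handled through the new unknowns).

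First, I would set up the constants. Under \eqref{ass_h}, the weighted bounds $\|\ly^{l+1}\p_y^i(u,h)\|_{L^\infty} \leq \delta_0^{-1}$ for $i=1,2$ feed into \eqref{def_eta} and \eqref{def_M} to yield $\|\ly^{l+1}\eta_i\|_{L^\infty} \leq 2\delta_0^{-2}$ and $M(t) \leq 5\delta_0^{-2}$ for $\delta_0$ small enough (absorbing the bounded outer flow contribution). This is the only place the smallness of $\delta_0$ is used.

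Next I would split the sum defining $\|(u,h)\|_{\h_l^m}^2$ according to whether $|\beta|\leq m-1$ or $|\beta|=m$, using $D^\alpha=\p_\tau^\beta\p_y^k$. For $|\beta|\leq m-1$, the summand $\|D^\alpha(u,h)\|_{L^2_{l+k}}$ is left untouched (here $L^2_{l+k}$ is contained in $L^2_l$ trivially, but really one keeps the sharp weight as written). For $|\beta|=m$ (which forces $k=0$), I apply the right-hand inequality in \eqref{equi} of Lemma \ref{lem_equ} to replace $\|\p_\tau^\beta(u,h)\|_{L^2_l}$ by $M(t)\|(u_\beta,h_\beta)\|_{L^2_l}$, and then invoke the bound $M(t)^2\leq 25\delta_0^{-4}$. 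Summing yields \eqref{equ_m0}.

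The estimate \eqref{equ_ym} proceeds analogously: decompose $\|\p_y(u,h)\|_{\h_l^m}^2$ into $|\beta|\leq m-1$ and $|\beta|=m$ pieces; for the latter apply \eqref{equi_y1} to bound $\|\p_y\p_\tau^\beta(u,h)\|_{L^2_l}\leq \|\p_y(u_\beta,h_\beta)\|_{L^2_l}+M(t)\|h_\beta\|_{L^2_l}$, then square and use $(a+b)^2\leq 2a^2+2b^2$ together with $M(t)^2\leq 25\delta_0^{-4}$ to obtain the factor $50\delta_0^{-4}$ in front of $\|h_\beta\|_{L^2_l}^2$. There is no real obstacle here; the only mild care is to track that $M(t)$ depends on $\|\ly^{l+1}\p_y^2(u,h)\|_{L^\infty}$ as well as on $\|\ly^{l+1}\p_y(u,h)\|_{L^\infty}$, both of which are controlled by $\delta_0^{-1}$ under \eqref{ass_h}, so the constants $25\delta_0^{-4}$ and $50\delta_0^{-4}$ are indeed legitimate.
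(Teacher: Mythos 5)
Your proposal is correct and follows essentially the same route as the paper: install the bounds $\|\ly^{l+1}\eta_i\|_{L^\infty}\leq 2\delta_0^{-2}$ and $M(t)\leq 5\delta_0^{-2}$ from \eqref{ass_h}, split the $\h_l^m$ norm according to $|\beta|\leq m-1$ versus $|\beta|=m$ (the latter forcing $k=0$), and apply \eqref{equi} and \eqref{equi_y1} of Lemma \ref{lem_equ} with $M(t)^2\leq 25\delta_0^{-4}$ and $(a+b)^2\leq 2a^2+2b^2$ to produce the constants $25\delta_0^{-4}$ and $50\delta_0^{-4}$. No gaps.
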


Now, we can derive the desired a priori estimates of $(u,h)$ for the problem \eqref{bl_main}. From Proposition \ref{prop_estm} and \ref{prop_xm}, it follows that for $m\geq5$ and any $t\in[0,T],$
\begin{align}\label{est_all}
	&\frac{d}{dt}\Big(\sum_{\substack{|\alpha|\leq m\\|\beta|\leq m-1}}\big\|D^\alpha(u, h)(t)\big\|^2_{L^2_l(\mathbb{R}^2_+)}+25\delta_0^{-4}\sum_{|\beta|=m}\big\|(u_\beta, h_\beta)(t)\big\|_{L_l^2(\Omega)}^2\Big)\nonumber\\
	&+\mu\Big(\sum_{\substack{|\alpha|\leq m\\|\beta|\leq m-1}}\big\|D^\alpha\p_y u(t)\big\|^2_{L^2_l(\mathbb{R}^2_+)}+25\delta_0^{-4}\sum_{|\beta|=m}\big\|\p_y u_\beta(t)\big\|_{L_l^2(\Omega)}^2\Big)\nonumber\\
	&+\kappa\Big(\sum_{\substack{|\alpha|\leq m\\|\beta|\leq m-1}}\big\|D^\alpha\p_y h(t)\big\|^2_{L^2_l(\mathbb{R}^2_+)}+25\delta_0^{-4}\sum_{|\beta|=m}\big\|\p_y h_\beta(t)\big\|_{L_l^2(\Omega)}^2\Big)\nonumber\\
	\leq~&
	\delta_1C\|\p_y(u, h)(t)\|_{\h_0^m}^2+C\delta_1^{-1}\|(u, h)(t)\|^2_{\h_l^m}\big(1+\|(u, h)(t)\|^2_{\h_l^m}\big)+\sum_{\tiny\substack{|\alpha|\leq m\\|\beta|\leq m-1}}\|D^\alpha(r_1, r_2)(t)\|_{L^2_{l+k}(\Omega)}^2\nonumber\\
	&+C\sum_{|\beta|\leq m+2}\|\p_\tau^\beta (U,H,P)(t)\|_{L^2(\bT_x)}^2+25\delta_0^{-4}\sum_{|\beta|=m}\Big(\|\p_\tau^\beta r_1-\eta_1\p_\tau^\beta r_3\|_{L_l^2(\Omega)}^2+\|\p_\tau^\beta r_2-\eta_2\p_\tau^\beta r_3\|_{L_l^2(\Omega)}^2\Big)\nonumber\\
	&+C\delta_0^{-6}\big(\sum_{|\beta|\leq m+2}\|\p_\tau^{\beta}(U,H)(t)\|_{L^2(\bT_x)}+\|(u,h)(t)\|_{\h_l^m}\big)^2\Big(\sum_{|\beta|=m}\|(u_\beta, h_\beta)(t)\|_{L^2_l(\Omega)}^2\Big)\nonumber\\
	&+C\delta_0^{-8}\big(\sum_{|\beta|\leq m+2}\|\p_\tau^{\beta}(U,H)(t)\|_{L^2(\bT_x)}+\|(u,h)\|_{\h_{l}^m}\big)^4\|(u ,h)(t)\|_{\h_l^m}^2.
\end{align}
Plugging the inequalities \eqref{equ_m0} and \eqref{equ_ym} given in Corollary \ref{cor_equi} into \eqref{est_all}, and choosing $\delta_1$ small enough, we get
\begin{align}\label{est_all1}
&\frac{d}{dt}\Big(\sum_{\substack{|\alpha|\leq m\\|\beta|\leq m-1}}\big\|D^\alpha(u, h)(t)\big\|^2_{L^2_l(\mathbb{R}^2_+)}+25\delta_0^{-4}\sum_{|\beta|=m}\big\|(u_\beta, h_\beta)(t)\big\|_{L_l^2(\Omega)}^2\Big)\nonumber\\
	&+\Big(\sum_{\substack{|\alpha|\leq m\\|\beta|\leq m-1}}\big\|D^\alpha\p_y (u, h)(t)\big\|^2_{L^2_l(\mathbb{R}^2_+)}+25\delta_0^{-4}\sum_{|\beta|=m}\big\|\p_y (u_\beta, h_\beta)(t)\big\|_{L_l^2(\Omega)}^2\Big)\nonumber\\
	\leq~&C\sum_{\tiny\substack{|\alpha|\leq m\\|\beta|\leq m-1}}\|D^\alpha(r_1, r_2)(t)\|_{L^2_{l+k}(\Omega)}^2+C\delta_0^{-4}\sum_{|\beta|=m}\Big(\|\p_\tau^\beta (r_1, r_2)(t)\|_{L_l^2(\Omega)}^2+4\delta_0^{-4}\|\p_\tau^\beta r_3\|_{L_{-1}^2(\Omega)}^2\Big)\nonumber\\
	&+C\delta_0^{-8}\Big(1+\sum_{|\beta|\leq m+2}\|\p_\tau^\beta (U,H,P)(t)\|_{L^2(\bT_x)}^2\Big)^3\nonumber\\
	&+C\delta_0^{-8}\Big(\sum_{\substack{|\alpha|\leq m\\|\beta|\leq m-1}}\|D^\alpha(u, h)(t)\|_{L_l^2(\Omega)}^2+25\delta_0^{-4}\sum_{|\beta|=m}\|(u_\beta, h_\beta)(t)\|_{L_l^2(\Omega)}^2\Big)^3,
\end{align}
where  we have used the fact that
\[\|\eta_i\p_\tau^\beta r_3\|_{L_l^2(\Omega)}\leq\|\ly^{i+1}\eta_i(t)\|_{L^\infty(\Omega)}\|\ly^{-1}\p_\tau^\beta r_3\|_{L^2(\Omega)}\leq 2\delta_0^{-2}\|\p_\tau^\beta r_3\|_{L_{-1}^2(\Omega)},\quad i=1,2.\]
Denote by
\begin{align}\label{F_def}
	F_0~:=~\sum_{\substack{|\alpha|\leq m\\|\beta|\leq m-1}}\|D^\alpha(u, h)(0)\|_{L_l^2(\Omega)}^2+25\delta_0^{-4}\sum_{|\beta|=m}\big\|(u_{\beta0}, h_{\beta0})\big\|_{L^2_l(\Omega)}^2,
\end{align}
and
\begin{align}\label{Ft_def}
	F(t)~:=~&C\sum_{\tiny\substack{|\alpha|\leq m\\|\beta|\leq m-1}}\|D^\alpha(r_1, r_2)(t)\|_{L^2_{l+k}(\Omega)}^2+C\delta_0^{-4}\sum_{|\beta|=m}\Big(\|\p_\tau^\beta (r_1, r_2)(t)\|_{L_l^2(\Omega)}^2+4\delta_0^{-4}\|\p_\tau^\beta r_3\|_{L_{-1}^2(\Omega)}^2\Big)\nonumber\\
	&+C\delta_0^{-8}\Big(1+\sum_{|\beta|\leq m+2}\|\p_\tau^\beta (U,H,P)(t)\|_{L^2(\bT_x)}^2\Big)^3.
\end{align}
By the comparison principle of ordinary differential equations in \eqref{est_all1}, it yields that
\begin{align}
	\label{est_fin0}
	&\sum_{\substack{|\alpha|\leq m\\|\beta|\leq m-1}}\|D^\alpha(u, h)(t)\|_{L_l^2(\Omega)}^2+25\delta_0^{-4}\sum_{|\beta|=m}\big\|(u_\beta, h_\beta)(t)\big\|_{L^2_l(\Omega)}^2\nonumber\\
	&+\int_0^t\Big(\sum_{\substack{|\alpha|\leq m\\|\beta|\leq m-1}}\big\|D^\alpha\p_y (u, h)(s)\big\|^2_{L^2_l(\Omega)}+25\delta_0^{-4}\sum_{|\beta|=m}\big\|\p_y (u_\beta, h_\beta)(s)\big\|_{L_l^2(\Omega)}^2\Big)ds\nonumber\\
	\leq~&\big(F_0+\int_0^tF(s)ds\big)\cdot\Big\{1-2C\delta_0^{-8}\big(F_0+\int_0^tF(s)ds\big)^2t\Big\}^{-\frac{1}{2}}.
\end{align}
Then, it implies that by combining \eqref{equ_m0} with \eqref{est_fin0},
\begin{align}
	\label{est_fin2}
	\sup_{0\leq s\leq t}\|(u, h)(s)\|_{\h_l^m}~\leq~\big(F_0+\int_0^tF(s)ds\big)^{\frac{1}{2}}\cdot\Big\{1-2C\delta_0^{-8}\big(F_0+\int_0^tF(s)ds\big)^2t\Big\}^{-\frac{1}{4}}.
\end{align}

As we know
\begin{align*}
	&\ly^{l+1}\p_y^i(u, h)(t,x,y)=\ly^{l+1}\p_y^i(u_0, h_0)(x,y)+\int_0^t\ly^{l+1}\p_t\p_y^i(u, h)(s,x,y)ds,\quad i=1,2,\end{align*}
	and
	\begin{align*}
	&h(t,x,y)=h_0(x,y)+\int_0^t\p_th(s,x,y)ds.
\end{align*}
Then, by the Sobolev embedding inequality and \eqref{est_fin2} we have that for $i=1,2,$
\begin{align}\label{bound_uy}
	&\|\ly^{l+1}\p_y^i(u, h)(t)\|_{L^\infty(\Omega)}\nonumber\\
	\leq ~&\|\ly^{l+1}\p_y^i(u_0, h_0)\|_{L^\infty(\Omega)}+\int_0^t\|\ly^{l+1}\p_t\p_y^i(u, h)(s)\|_{L^\infty(\Omega)}ds\nonumber\\
	\leq~&\|\ly^{l+1}\p_y^i(u_0, h_0)\|_{L^\infty(\Omega)}+C\big(\sup_{0\leq s\leq t}\|(u, h)(s)\|_{\h_l^5}\big)\cdot t\nonumber\\
	\leq~&\|\ly^{l+1}\p_y^i(u_0, h_0)\|_{L^\infty(\Omega)}+C t\cdot\big(F_0+\int_0^tF(s)ds\big)^{\frac{1}{2}}\Big\{1-2C\delta_0^{-8}\big(F_0+\int_0^tF(s)ds\big)^2t\Big\}^{-\frac{1}{4}}.
	\end{align}
Similarly, one can obtain that
\begin{align}\label{bound_h}
	h(t,x,y)\geq ~&h_0(x,y)-\int_0^t\|\p_t h(s)\|_{L^\infty(\Omega)}ds	\geq~h_0(x,y)-C\big(\sup_{0\leq s\leq t}\|h(s)\|_{\h_0^3}\big)\cdot t\nonumber\\
	\geq~&h_0(x,y)-C t\cdot\big(F_0+\int_0^tF(s)ds\big)^{\frac{1}{2}}\Big\{1-2C\delta_0^{-8}\big(F_0+\int_0^tF(s)ds\big)^2t\Big\}^{-\frac{1}{4}}.
	\end{align}
	Therefore, we obtain the following  
	\begin{prop}\label{prop-priori}
	Under the assumptions of Proposition \ref{prop_priori}, there exists a constant $C>0$, depending only on $m, M_0$ and $\phi$, such that 
	\begin{align}\label{est_priori-1}
	\sup_{0\leq s\leq t}\|(u, h)(s)\|_{\h_l^m}~\leq~\big(F_0+\int_0^tF(s)ds\big)^{\frac{1}{2}}\cdot\Big\{1-2C\delta_0^{-8}\big(F_0+\int_0^tF(s)ds\big)^2t\Big\}^{-\frac{1}{4}},
\end{align}
for small time, where the quantities $F_0$ and $F(t)$ are defined by \eqref{F_def} and \eqref{Ft_def} respectively. Also, we have that for $i=1,2,$
\begin{align}\label{upbound_uy-1}
	&\|\ly^{l+1}\p_y^i(u, h)(t)\|_{L^\infty(\Omega)}\nonumber\\
	\leq~&\|\ly^{l+1}\p_y^i(u_0, h_0)\|_{L^\infty(\Omega)}+Ct\cdot\big(\sup_{0\leq s\leq t}\|(u, h)(s)\|_{\h_l^5}\big)\nonumber\\
	\leq~&\|\ly^{l+1}\p_y^i(u_0, h_0)\|_{L^\infty(\Omega)}+C t\cdot\big(F_0+\int_0^tF(s)ds\big)^{\frac{1}{2}}\Big\{1-2C\delta_0^{-8}\big(F_0+\int_0^tF(s)ds\big)^2t\Big\}^{-\frac{1}{4}},
	\end{align}
and 
\begin{align}\label{h_lowbound-1}
	h(t,x,y)\geq~&h_0(x,y)-C\big(\sup_{0\leq s\leq t}\|h(s)\|_{\h_0^3}\big)\cdot t\nonumber\\
\geq~&h_0(x,y)-C t\cdot\big(F_0+\int_0^tF(s)ds\big)^{\frac{1}{2}}\Big\{1-2C\delta_0^{-8}\big(F_0+\int_0^tF(s)ds\big)^2t\Big\}^{-\frac{1}{4}}.
\end{align}
\end{prop}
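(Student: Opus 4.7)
The plan is to assemble Proposition \ref{prop-priori} by combining the two building blocks already established (Proposition \ref{prop_estm} for $D^\alpha(u,h)$ with $|\beta|\leq m-1$ and Proposition \ref{prop_xm} for $(u_\beta,h_\beta)$ with $|\beta|=m$) into a single differential inequality, converting it via the $L^2_l$--equivalence of Lemma \ref{lem_equ} into one involving $\|(u,h)(t)\|_{\h_l^m}$, and then solving the resulting ODE by comparison.

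First I would add the estimate of Proposition \ref{prop_estm} to $25\delta_0^{-4}$ times the estimate of Proposition \ref{prop_xm}, producing \eqref{est_all} in the exact form written above. The coefficient $25\delta_0^{-4}$ is chosen so that the upper bounds in Corollary \ref{cor_equi} --- namely \eqref{equ_m0} for $\|(u,h)(t)\|_{\h_l^m}^2$ and \eqref{equ_ym} for $\|\p_y(u,h)(t)\|_{\h_l^m}^2$ --- can replace the right-hand side quantities $\|(u,h)\|_{\h_l^m}$ and $\|\p_y(u,h)\|_{\h_l^m}$ by the very same left-hand side quantities that appear in the energy identity. The dissipation term on the right of Proposition \ref{prop_estm}, weighted by the small $\delta_1$, is then absorbed into the left-hand side dissipation by choosing $\delta_1$ sufficiently small depending only on $m$. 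The terms involving $\|\p_\tau^\beta(U,H,P)\|_{L^2(\bT_x)}$ and $\|\p_\tau^\beta(r_1,r_2,r_3)\|$ are collected into the forcing $F(t)$ defined in \eqref{Ft_def}, and the cubic-in-energy term on the right is obtained by bounding $\|(u,h)\|_{\h_l^m}^2\cdot(1+\|(u,h)\|_{\h_l^m}^2)$ and the fourth power term by a uniform cubic expression of the form $C\delta_0^{-8}(\cdots)^3$.

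Setting $E(t):=\sum_{|\alpha|\leq m,\,|\beta|\leq m-1}\|D^\alpha(u,h)(t)\|^2_{L^2_l}+25\delta_0^{-4}\sum_{|\beta|=m}\|(u_\beta,h_\beta)(t)\|^2_{L^2_l}$, inequality \eqref{est_all1} reads schematically $E'(t)+(\text{dissipation})\leq F(t)+C\delta_0^{-8}E(t)^3$. Dropping the dissipation, this is a scalar ODE inequality $y'\leq F+Cy^3$ with $y(0)=F_0$; the standard trick is to compare with the solution of the Bernoulli-type equation obtained by writing $(y^2)'\leq 2y^{1/2}(F+Cy^3)^{1/2}\cdot y^{1/2}$ after integration, or more directly by noting $\frac{d}{dt}\bigl(y(t)^{-2}\bigr)\geq-2C\delta_0^{-8}-2F(t)y(t)^{-3}$ and integrating after a quick bootstrap. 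This yields precisely \eqref{est_fin0}; re-including the dissipation term on the left (which one had simply discarded) completes the time-integrated estimate. Applying \eqref{equ_m0} then converts the bound on $E(t)$ into the bound \eqref{est_priori-1} for $\sup_{0\leq s\leq t}\|(u,h)(s)\|_{\h_l^m}$.

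Finally, for \eqref{upbound_uy-1} and \eqref{h_lowbound-1}, I would write $\ly^{l+1}\p_y^i(u,h)(t)=\ly^{l+1}\p_y^i(u_0,h_0)+\int_0^t\ly^{l+1}\p_t\p_y^i(u,h)(s)\,ds$ and $h(t)=h_0+\int_0^t\p_th(s)\,ds$, control each integrand in $L^\infty(\Omega)$ by the Sobolev embedding $\|\cdot\|_{L^\infty(\Omega)}\leq C\|\cdot\|_{\h_l^3}$ (noting $m\geq 5$), and insert \eqref{est_priori-1}. The main technical obstacle is really in the absorption step: one must verify that the $\delta_1$-term from \eqref{est_duff4} on the left truly dominates the $\|\p_y(u,h)\|_{\h_0^m}^2$ on the right after passing through Corollary \ref{cor_equi} (which introduces an extra $\|h_\beta\|^2_{L^2_l}$ through \eqref{equ_ym}); this is why the factor $25\delta_0^{-4}$ is large enough, since the $50\delta_0^{-4}\|h_\beta\|^2_{L^2_l}$ term from \eqref{equ_ym} is harmless compared to the cubic energy term on the right. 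Everything else is either a direct citation of an earlier estimate or a routine Gronwall/comparison argument.
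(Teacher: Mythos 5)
Your proposal follows essentially the same route as the paper: sum Proposition \ref{prop_estm} with $25\delta_0^{-4}$ times Proposition \ref{prop_xm}, absorb the $\delta_1$-weighted dissipation after passing through Corollary \ref{cor_equi}, solve the resulting inequality $E'\leq F+C\delta_0^{-8}E^3$ by ODE comparison to get \eqref{est_fin0}--\eqref{est_fin2}, and then obtain \eqref{upbound_uy-1} and \eqref{h_lowbound-1} by integrating in time and using Sobolev embedding. The steps, including the choice of the coefficient $25\delta_0^{-4}$ and the handling of the extra $50\delta_0^{-4}\|h_\beta\|^2_{L^2_l}$ term from \eqref{equ_ym}, match the paper's argument.
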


From the above Proposition \ref{prop-priori}, we 
are ready to prove Proposition \ref{prop_priori}. Indeed, by using \eqref{ass_outflow}, \eqref{est_rhd} and the fact $\|\p_\tau^\beta r_3\|_{L_{-1}^2(\Omega)}\leq CM_0$ from the expression \eqref{r_3}, it follows that from the definition \eqref{Ft_def} for $F(t)$,
\begin{align}\label{est_Ft}
	F(t)~\leq~C\delta_0^{-8}M_0^6.
\end{align}
Next, by direct calculation we know that $D^\alpha (u, h)(0,x,y), |\alpha|\leq m$ can be expressed by  the spatial derivatives of initial data $(u_0, h_0)$ up to order $2m$. Then, combining with \eqref{ib_hat} 
we get that $F_0$, given by \eqref{F_def}, is a polynomial of $\big\|\big(u_{0}, h_{0}\big)\big\|_{H_l^{2m}(\Omega)}$,  and consequently 
\begin{align}\label{def_F}
	F_0~\leq~\delta_0^{-8}~\cp\big(M_0+\|(u_0,h_0)\|_{H_l^{2m}(\Omega)}\big).
\end{align}
Plugging \eqref{est_Ft} and \eqref{def_F} into \eqref{est_priori-1}-\eqref{h_lowbound-1}, we derive the estimates \eqref{est_priori}-\eqref{h_lowbound}, and 
then obtain the  proof of Proposition \ref{prop_priori}.

\section{Local-in-time existence and uniqueness}

In this section, we will establish the local-in-time existence and uniqueness of solutions to the nonlinear problem (\ref{bl_main}).

\subsection{Existence}
\indent\newline
For this, we consider a parabolic regularized system for problem \eqref{bl_main}, from which we can obtain the local (in time) existence of solution by using classical energy estimates. Precisely, for a small parameter $0<\epsilon<1,$ we investigate the following problem:
\begin{align}
\label{pr_app}
\left\{
\begin{array}{ll}
\partial_tu^\ep+\big[(u^\ep+U\phi')\partial_x+(v^\ep-U_x\phi)\partial_y\big]u^\ep-\big[(h^\ep+H\phi')\partial_x+(g^\ep-H_x\phi)\partial_y\big]h^\ep+U_x\phi'u^\ep+U\phi''v^\ep\\
\qquad-H_x\phi'h^\ep-H\phi''g^\ep=\ep\p_x^2u^\ep+\mu\partial^2_yu^\ep+r_1^\ep,\\
\partial_t h^\ep+\big[(u^\ep+U\phi')\partial_x+(v^\ep-U_x\phi)\partial_y\big]h^\ep-\big[(h^\ep+H\phi')\partial_x+(g^\ep-H_x\phi)\partial_y\big]u^\ep+H_x\phi'u^\ep+H\phi''v^\ep\\
\qquad-U_x\phi'h^\ep-U\phi''g^\ep=\ep\p_x^2h^\ep+\kappa\partial^2_y h^\ep+r_2^\ep,\\
\partial_xu^\ep+\partial_y v^\ep=0,\quad \partial_xh^\ep+\partial_y g^\ep=0,\\
(u^\ep,h^\ep)|_{t=0}=(u_{0}, h_{0})(x,y),\qquad
(u^\ep,v^\ep,\partial_yh^\ep,g^\ep)|_{y=0}=0,
\end{array}
\right.
\end{align}
where the source term 
\begin{align}\label{new_source}
	(r_1^\ep,r_2^\ep)(t,x,y)~=~(r_1,r_2)+\ep(\tilde r_1^\ep, \tilde r_2^\ep)(t,x,y).
\end{align}  
Here, $(r_1,r_2)$ is the source term of the original problem \eqref{bl_main}, and $(\tilde r_1^\ep, \tilde r_2^\ep)$ is constructed to ensure that the initial data $(u_{0}, h_{0})$ 
also satisfies the compatibility conditions of \eqref{pr_app} up to the order of $m$. Actually, we can use the given functions $\p_t^i(u, h)(0,x,y), 0\leq i\leq m$, which can be derived from the equations and initial data of \eqref{bl_main} by induction with respect to $i$, and it follows that $\p_t^i(u, h)(0,x,y)$ can be expressed as polynomials of the spatial derivatives, up to order $2i$, of the initial data $(u_0,h_0)$.  Then, we may choose the corrector $(\tilde r_1^\ep, \tilde r_2^\ep)$ in the following form:
\begin{align}\label{modify}
	(\tilde r_1^\ep, \tilde r_2^\ep)(t,x,y)~:=~-\sum_{i=0}^m\Big(\frac{t^i}{i!}\p_x^2\p_t^i(u, h)(0,x,y)\Big),
\end{align}
which yields that  by direct calculation,
\[\p_t^i(u^\ep, h^\ep)(0,x,y)~=~\p_t^i(u, h)(0,x,y),\quad 0\leq i\leq m.\] 
Likewise, we can derive that $\psi^\ep:=\p_y^{-1}h^\epsilon$ satisfies
\begin{align*}
\p_t \psi^\ep+\big[(u^\ep+U\phi')\p_x+(v^\ep-U_x\phi)\p_y\big]\psi^\ep+H_x\phi u^\ep+H\phi'v^\ep-\kappa\p_y^2\psi^\ep=r_3^\ep,
\end{align*}
where
\begin{align}\label{r_3ep}
	r_3^\ep~=~r_3-\ep\sum_{i=0}^m\Big(\frac{t^i}{i!}\int_0^y\p_x^2\p_t^ih(0,x,z)dz\Big)~:=~r_3+\ep\tilde r_3
\end{align}
with $r_3$ given by \eqref{r_3}. Moreover, we have for $\alpha=(\beta,k)=(\beta_1,\beta_2,k)$ with $|\alpha|\leq m$, 
\begin{align}\label{est_rmodify}
\|D^\alpha (\tilde r_1^\ep, \tilde r_2^\ep)(t)\|_{L_{l+k}^2(\Omega)},~\|\p_\tau^\beta \tilde r_3^\ep(t)\|_{L_{-1}^2(\Omega)}\leq \sum_{\beta_1\leq i\leq m}t^{i-\beta_1}\cp\Big(M_0+\|(u_0,h_0)\|_{H_l^{2i+2+\beta_2+k}}\Big).
\end{align}


 Based on the a priori energy estimates established in Proposition \ref{prop-priori}, we can obtain 
\begin{prop}
\label{Th2}
Under the hypotheses of Theorem \ref{thm_main}, 
 there exist a time $0<T_*\leq T$, independent of $\epsilon$, and a solution $(u^\ep,v^\ep,h^\ep,g^\ep)$ to the initial boundary value problem (\ref{pr_app}) with $(u^\ep, h^\ep)\in L^\infty\big(0,T_*; \h_l^m\big)$, which satisfies the following uniform estimates in $\epsilon$:
\begin{align}\label{est_modify1}
	\sup_{0\leq t\leq T_*}\big\|\big(u^\ep, h^\ep\big)(t)\big\|_{\h_l^m}\leq~2F^{\frac{1}{2}}_0,
\end{align}
where 
$F_0$ is given by \eqref{F_def}).
 Moreover,  for  $t\in[0,T_*], (x,y)\in\Omega,$
\begin{align}
\label{est_modify2}
\big\|\ly^{l+1}\p_y^i(u^\ep,h^\ep)(t)\big\|_{L^\infty(\Omega)}\leq~\delta_0^{-1},
\quad h^\epsilon(t,x,y)+H(t,x)\phi'(y)~\geq~\delta_0,\quad \quad i=1,2.
\end{align}
\end{prop}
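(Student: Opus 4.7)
The strategy has two stages: first construct the regularized solution $(u^\ep,h^\ep)$ for each fixed $\ep>0$ on some (possibly $\ep$-dependent) interval, then use the a priori estimates of Proposition \ref{prop_priori} to extract a uniform-in-$\ep$ existence time with the stated bounds.

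For the first stage I would solve \eqref{pr_app} by a linear iteration. Setting $(u^{\ep,0},h^{\ep,0})=(u_0,h_0)$ and freezing the transport coefficients at the previous step, each iterate solves a linear parabolic system that is uniformly parabolic in both $x$ and $y$ (thanks to the $\ep\p_x^2+\mu\p_y^2$ and $\ep\p_x^2+\kappa\p_y^2$ diffusion) on $\bT\times\bR_+$ with mixed Dirichlet/Neumann data at $y=0$ and homogeneous behavior as $y\to+\infty$. Standard linear parabolic theory in weighted Sobolev spaces gives a unique solution on some $[0,T^\ep]$, and a contraction estimate (at one derivative lower) shows the iterates converge to a classical solution of \eqref{pr_app}. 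The modification \eqref{modify} is designed exactly so that $\p_t^i(u^\ep,h^\ep)(0,\cdot)=\p_t^i(u,h)(0,\cdot)$ for $0\le i\le m$; hence the initial data together with the boundary values satisfy the $m$-th order compatibility conditions for \eqref{pr_app}, ensuring regularity near $t=0$.

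For the second stage I would run the energy estimates of Section~3 on \eqref{pr_app}. The extra terms compared with \eqref{bl_main} are the dissipation $\ep\p_x^2(u^\ep,h^\ep)$ and the source correction $\ep(\tilde r_1^\ep,\tilde r_2^\ep)$. Integration by parts against $\ly^{2l+2k}D^\alpha(u^\ep,h^\ep)$ turns the former into nonnegative terms $\ep\|\p_xD^\alpha(u^\ep,h^\ep)(t)\|_{L^2_{l+k}(\Omega)}^2$ plus commutator contributions with the $y$-weight that are already majorized by the right-hand side of \eqref{est_prop1}; the same manipulation in the tangential $|\beta|=m$ estimate produces analogous favorable terms for $(u^\ep_\beta,h^\ep_\beta)$, because the commutator $[\ep\p_x^2,\eta_i\p_\tau^\beta\p_y^{-1}]$ only generates $O(\ep)$ factors times quantities of the type already handled in $R_1^\beta,R_2^\beta$ via Lemma \ref{lemma_ineq}. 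The source contributions obey \eqref{est_rmodify}, so they can be absorbed into $F(t)$ in \eqref{Ft_def} up to an $\ep$-independent multiplicative constant on a bounded time interval. Consequently the full differential inequality \eqref{est_all1} and the pointwise bounds \eqref{bound_uy}, \eqref{bound_h} hold for $(u^\ep,h^\ep)$ with constants independent of $\ep$.

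I would then close the proposition by a continuation argument. Define
\[
T_*^\ep \;:=\; \sup\Big\{\,t\in[0,T^\ep]\ :\ \sup_{0\le s\le t}\|(u^\ep,h^\ep)(s)\|_{\h_l^m}\le 2F_0^{1/2},\ \|\ly^{l+1}\p_y^i(u^\ep,h^\ep)(s)\|_{L^\infty(\Omega)}\le \delta_0^{-1},\ h^\ep+H\phi'\ge\delta_0\,\Big\}.
\]
On $[0,T_*^\ep]$ the hypothesis \eqref{ass_h} is verified, so the conclusions \eqref{est_priori}, \eqref{upbound_uy}, \eqref{h_lowbound} apply uniformly in $\ep$. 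Combining these with the assumption \eqref{ass_bound-modify} on $(u_0,h_0)$ (which gives $|\ly^{l+1}\p_y^i(u_0,h_0)|\le(2\delta_0)^{-1}$ and $h_0+H(0,x)\phi'\ge2\delta_0$), there exists a time $0<T_*\le T$ depending only on $M_0$, $\|(u_0,h_0)\|_{H^{2m}_l(\Omega)}$, $\delta_0$ and $\phi$ for which the three bounds defining $T_*^\ep$ are improved by a strict factor. A standard continuity argument then forbids $T_*^\ep<T_*$, proving \eqref{est_modify1}--\eqref{est_modify2}. The main technical step I anticipate is the bookkeeping in the second stage: verifying that the tangential regularization does not spoil the cancellation structure of \eqref{new_qu}--\eqref{pr_hat}, so that the $\ep$-independent constants in \eqref{est_hat} and \eqref{est_all1} are preserved.
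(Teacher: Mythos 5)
Your proposal follows essentially the same route as the paper: local solvability of the uniformly parabolic regularization for each fixed $\ep$, the Section 3 energy estimates rerun on \eqref{pr_app} (with the $\ep\p_x^2$ dissipation contributing good-sign terms and the correctors controlled via \eqref{est_rmodify}), and a continuity argument exploiting the factor-of-two margins in \eqref{ass_bound-modify} to fix a uniform $T_*$. The only small discrepancy is that the paper's uniform time depends on $\|(u_0,h_0)\|_{H^{3m+2}_l(\Omega)}$ rather than just the $H^{2m}_l$ norm, because the correctors $(\tilde r_1^\ep,\tilde r_2^\ep)$ involve $\p_x^2\p_t^i(u,h)(0,\cdot)$ up to $i=m$; since Theorem \ref{thm_main} assumes exactly this regularity, your argument goes through as described.
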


\begin{proof}[\textbf{Proof.}]
Since the problem \eqref{pr_app} is a parabolic system,  it is standard to show that \eqref{pr_app} admits a solution in a time interval $[0,T_\epsilon]$ ($T_\epsilon$ may depend on $\epsilon$) satisfying the estimates \eqref{est_modify2}.
Indeed, one can establish a priori estimates for \eqref{pr_app}, and then obtain the local existence of solution by the standard iteration and weak convergence methods. 

On the other hand,  we can derive the similar a priori estimates as in Proposition \ref{prop-priori} for \eqref{pr_app}, so by the standard continuity argument we can obtain the existence of solution in a time interval $[0,T_*], T_*>0$ independent of $\epsilon$. Therefore, we only determine the uniform lifespan $T_*$, and verify the estimates \eqref{est_modify1} and \eqref{est_modify2}. 

According to Proposition \ref{prop-priori}, we can obtain the estimates for $(u^\ep, h^\ep)$ similar as \eqref{est_priori-1}: 
\begin{align}\label{est_priori-ep1}
	\sup_{0\leq s\leq t}\|(u^\ep, h^\ep)(s)\|_{\h_l^m}~\leq~\big(F_0+\int_0^tF^\ep(s)ds\big)^{\frac{1}{2}}\cdot\Big\{1-2C\delta_0^{-8}\big(F_0+\int_0^tF^\ep(s)ds\big)^2t\Big\}^{-\frac{1}{4}},
\end{align}
as long as the quantity in $\{\cdot\}$ on the right-hand side of \eqref{est_priori} is positive, where the quantity $F_0$ is given by \eqref{F_def}, and $F^\epsilon(t)$ is defined as follows (similar as \eqref{Ft_def}):
\begin{align}\label{Ft_def-ep}
	F^\ep(t)~:=~&C\sum_{\tiny\substack{|\alpha|\leq m\\|\beta|\leq m-1}}\|D^\alpha(r_1^\ep, r_2^\ep)(t)\|_{L^2_{l+k}(\Omega)}^2+C\delta_0^{-4}\sum_{|\beta|=m}\Big(\|\p_\tau^\beta (r_1^\ep, r_2^\ep)(t)\|_{L_l^2(\Omega)}^2+4\delta_0^{-4}\|\p_\tau^\beta r_3^\ep\|_{L_{-1}^2(\Omega)}^2\Big)\nonumber\\
	&+C\delta_0^{-8}\Big(1+\sum_{|\beta|\leq m+2}\|\p_\tau^\beta (U,H,P)(t)\|_{L^2(\bT_x)}^2\Big)^3.
\end{align}
Substituting \eqref{new_source}-\eqref{r_3ep} into \eqref{Ft_def-ep} and recalling $F(t)$ defined by \eqref{Ft_def}, it yields that
\begin{align*}
	F^\ep(t)~=~&F(t)+C\ep^2\sum_{\tiny\substack{|\alpha|\leq m\\|\beta|\leq m-1}}\|D^\alpha(\tilde r_1^\ep, \tilde r_2^\ep)(t)\|_{L^2_{l+k}(\Omega)}^2+C\ep^2\delta_0^{-4}\sum_{|\beta|=m}\Big(\|\p_\tau^\beta (\tilde r_1^\ep, \tilde r_2^\ep)(t)\|_{L_l^2(\Omega)}^2+4\delta_0^{-4}\|\p_\tau^\beta \tilde r_3^\ep\|_{L_{-1}^2(\Omega)}^2\Big),
\end{align*}
which implies that
from \eqref{est_Ft} and \eqref{est_rmodify}, 
\begin{align*}
	F^\ep(t)
	\leq~ &C\delta_0^{-8}M_0^6+\ep^2\delta_0^{-8}\cp\big(M_0+\|(u_0,h_0)\|_{H_l^{3m+2}}\big)\leq~\delta_0^{-8}\cp\big(M_0+\|(u_0,h_0)\|_{H_l^{3m+2}}\big).
\end{align*}
Therefore, by choosing 
\begin{align*}
	T_1~:=~\min\Big\{\frac{\delta_0^8F_0}{\cp\big(M_0+\|(u_0,h_0)\|_{H_l^{3m+2}}\big)}, ~\frac{3\delta_0^8}{32CF_0^2}\Big\}
\end{align*}
in \eqref{est_priori-ep1}, we obtain \eqref{est_modify1} for $T_*\leq T_1$.

On the other hand, similar as the estimates \eqref{upbound_uy-1} and \eqref{h_lowbound-1} given in Proposition \ref{prop-priori}, we have the following bounds for $\ly^{l+1}\p_y^i(u^\ep,h^\ep), i=1,2$ and $h^\ep:$ 
\begin{align}\label{upbound_ep}
	\|\ly^{l+1}\p_y^i(u^\ep, h^\ep)(t)\|_{L^\infty(\Omega)}
	\leq~&\|\ly^{l+1}\p_y^i(u_0, h_0)\|_{L^\infty(\Omega)}+Ct\cdot\big(\sup_{0\leq s\leq t}\|(u^\ep, h^\ep)(s)\|_{\h_l^5}\big),\quad i=1,2, 
	\end{align}
and 
\begin{align}\label{lowbound_ep}
	h^\ep(t,x,y)\geq~&h_0(x,y)-Ct\cdot\big(\sup_{0\leq s\leq t}\|(u^\ep, h^\ep)(s)\|_{\h_0^3}\big).
\end{align}
Then, from the assumptions \eqref{ass_bound-modify} for the initial data $(u_0,h_0)$, and the chosen of $T_1$ above, we obtain that by \eqref{est_priori-ep1},
\begin{align*}
	&\|\ly^{l+1}\p_y^i(u^\ep, h^\ep)(t)\|_{L^\infty(\Omega)}
	\leq~(2\delta_0)^{-1}+2CF_0^{\frac{1}{2}}t,\quad i=1,2, \\
	&h^\ep(t,x,y)+H(t,x)\phi'(y)\geq 2\delta_0+\big(H(t,x)-H(0,x)\big)\phi'(y)-2CF_0^{\frac{1}{2}}t\geq2\delta_0-C\big(M_0+2F_0^{\frac{1}{2}}\big)t.
\end{align*}
So, let us choose
\begin{align*}
	T_2~:=~\min\Big\{T_1,~\frac{1}{4C\delta_0F_0^{\frac{1}{2}}},~\frac{\delta_0}{C\big(M_0+2F_0^{\frac{1}{2}}\big)}\Big\},
\end{align*}
then, \eqref{est_modify2} holds for $T_*= T_2.$ Therefore, we find the lifespan $T_*=T_2$ and establish the estimates \eqref{est_modify1} and \eqref{est_modify2}, and consequently complete the proof of this proposition. 

\end{proof}

From the above Proposition \ref{Th2}, we obtain the local existence of solutions $(u^\ep,v^\ep,h^\ep,g^\ep)$ to the problem \eqref{pr_app} and their uniform estimates in $\epsilon$. Now, by letting $\ep\rightarrow0$ we will obtain the solution to the original problem \eqref{bl_main} through some compactness arguments. Indeed, from the uniform estimate \eqref{est_modify1},  by the Lions-Aubin lemma and the compact embedding of $H_l^m(\Omega)$ in $H_{loc}^{m'}$ for $m'<m$ (see \cite[Lemma 6.2]{MW1}), we know that there exists $(u, h)\in L^\infty\big(0,T_*;\h_l^m\big)\bigcap\Big(\bigcap_{m'<m-1}C^1\big([0,T_*]; H^{m'}_{loc}(\Omega)\big)\Big)$, such that, up to a subsequence,
\begin{align*}
	\p_t^i(u^\ep,h^\ep)~\stackrel{*}{\rightharpoonup}~\p_t^i(u, h),\qquad &\mbox{in}\quad L^\infty\big(0,T_*; H^{m-i}_l(\Omega)\big),\quad 0\leq i\leq m,\\
	(u^\ep,h^\ep)~\rightarrow~(u, h),\qquad &\mbox{in}\quad C^1\big([0,T_*]; H^{m'}_{loc}(\Omega)\big).
\end{align*}
Then, by using the uniform convergence of $(\p_x u^\ep, \p_x h^\ep)$ because of $(\p_x u^\ep, \p_x h^\ep)\in Lip~(\Omega_{T_*})$, we get the pointwise convergence for $(v^\ep, g^\ep)$, i.e., 
\begin{align}\label{vg_limit}
	(v^\ep, g^\ep)=\big(-\int_0^y\p_x u^\ep dz, -\int_0^y\p_x h^\ep dz\big)\rightarrow\big(-\int_0^y\p_x u dz, -\int_0^y\p_x h dz\big):=(v,g).
\end{align}

Now, we can pass  the limit $\ep\rightarrow0$ in the problem \eqref{pr_app}, and obtain that $(u,v,h,g)$, $v$ and $g$ given by \eqref{vg_limit}, solves the original problem \eqref{bl_main}. As $(u, h)\in L^\infty\big(0,T_*;\h_l^m\big)$ it is easy to get that $(u, h)\in\bigcap_{i=0}^mW^{i,\infty}\Big(0,T;H_l^{m-i}(\Omega)\Big),$ and consequently \eqref{result_1} is proven. Moreover, the relation \eqref{result_2}, respectively \eqref{result_3}, follows immediately by combining the divergence free conditions $v=-\p_y^{-1}\p_xu, g=-\p_y^{-1}\p_xh$ with \eqref{normal1}, respectively \eqref{normal2}. 
Thus, we prove the local existence result of Theorem \ref{thm_main}.

\subsection{Uniqueness}
\indent\newline
We will show the uniqueness of the obtained solution to (\ref{bl_main}). 
Let $(u^1,v^1, h^1,g^1)$ and $(u^2, v^2, h^2, g^2)$ be two solutions in $[0,T_*]$, constructed in the previous subsection, with respect to the initial data  $(u_0^1, h_0^1)$ and $(u_0^2, h_0^2)$ respectively. Set 
\[(\tilde{u}, \tilde v, \tilde{h}, \tilde g)=(u^1-u^2, v^1-v^2, h^1-h^2, g^1-g^2),\]
 then we have
 \begin{align}
\label{pr_diff}
\left\{
\begin{array}{ll}
\p_t \tilde{u}+\big[(u^1+U\phi')\p_x+(v^1-U_x\phi)\p_y\big]\tilde u-\big[(h^1+H\phi')\p_x+(g^1-H_x\phi)\p_y\big]\tilde h-\mu\p_y^2\tilde{u}\\
\qquad+(\p_xu^2+U_x\phi')\tilde u+(\p_yu^2+U\phi'')\tilde v-(\p_xh^2+H_x\phi')\tilde h-(\p_yh^2+H\phi'')\tilde g=0,\\
\p_t \tilde{h}+\big[(u^1+U\phi')\p_x+(v^1-U_x\phi)\p_y\big]\tilde h-\big[(h^1+H\phi')\p_x+(g^1-H_x\phi)\p_y\big]\tilde u-\kappa\p_y^2\tilde{h}\\
\qquad+(\p_xh^2+H_x\phi')\tilde u+(\p_yh^2+H\phi'')\tilde v-(\p_xu^2+U_x\phi')\tilde h-(\p_yu^2+U\phi'')\tilde g=0,\\
\p_x\tilde{u}+\p_y\tilde{v}=0,\quad \p_x\tilde{h}+\p_y\tilde{g}=0,\\
(\tilde{u}, \tilde h)|_{t=0}=(u_0^1-u_0^2,h_0^1-h_0^2),\quad 
(\tilde{u},\tilde v,\p_y\tilde h,\tilde g)|_{y=0}=\textbf{0}.
\end{array}
\right.
\end{align}

Denote by $\tilde\psi:=\p_y^{-1}\tilde h=\p_y^{-1}(h^1-h^2)$, then from the second equation $\eqref{pr_diff}_2$ of $\tilde h$ and the divergence free conditions, we know that $\tilde\psi$ satisfies the following equation:
\begin{align}\label{eq_tpsi}
	\p_t \tilde{\psi}+\big[(u^1+U\phi')\p_x+(v^1-U_x\phi)\p_y\big]\tilde \psi-\big(g^2-H_x\phi)\tilde u+(h^2+H\phi')\tilde v-\kappa\p_y^2\tilde{\psi}=0.
\end{align}
Similar as \eqref{new_qu}, we introduce the new quantities:
\begin{align}\label{new_tqu}
	\bar u~:=~\tilde u-\frac{\p_y u^2+U\phi''}{h^2+H\phi'}\tilde\psi,\quad \bar h~:=~\tilde h-\frac{\p_y h^2+H\phi''}{h^2+H\phi'}\tilde\psi,
\end{align}
and then,
\begin{align}\label{new_tqu1}
	\bar u~:=~u^1-u^2-\eta_1^2~\p_y^{-1}(h^1-h^2),\quad \bar h~:=~ h^1-h^2-\eta_2^2~\p_y^{-1}(h^1-h^2),
\end{align}
where we denote 
\[\eta_1^2~:=~\frac{\p_y u^2+U\phi''}{h^2+H\phi'},\quad \eta_2^2~:=~\frac{\p_y h^2+H\phi''}{h^2+H\phi'}.\]
Next, we can obtain that through direct calculation,   $(\bar u,\bar h)$ admits the following initial-boundary value problem:
\begin{align}\label{eq_buh}\begin{cases}
	\p_t\bar u+\big[(u^1+U\phi')\p_x+(v^1-U_x\phi)\p_y\big]\bar u-\big[(h^1+H\phi')\p_x+(g^1-H_x\phi)\p_y\big]\bar h-\mu\p_y^2\bar{u}+(\kappa-\mu)\eta_1^2\p_y\bar h\\
	\qquad+a_1\bar u+b_1\bar h+c_1\tilde\psi=0,\\
	\p_t\bar h+\big[(u^1+U\phi')\p_x+(v^1-U_x\phi)\p_y\big]\bar h-\big[(h^1+H\phi')\p_x+(g^1-H_x\phi)\p_y\big]\bar u-\kappa\p_y^2\bar h\\
	\qquad+a_2\bar u+b_2\bar h+c_2\tilde\psi=0,\\
	(\bar u,\p_y\bar h)|_{y=0}=0,\quad
	(\bar u, \bar h)|_{t=0}=\big(u_0^1-u_0^2-\eta_{10}^2~\p_y^{-1}(h_0^1-h_0^2),h_0^1-h_0^2-\eta_{20}^2~\p_y^{-1}(h_0^1-h_0^2)\big),
\end{cases}\end{align}
where 
\begin{align}\label{nota_abc}
	a_1=&\p_xu^2+U_x\phi'+(g^2-H_x\phi)\eta_1^2,\quad b_1=(\kappa-\mu)\eta_1^2\eta_2^2-2\mu\p_y\eta_1^2-(\p_xh^2+H_x\phi')-(g^2-H_x\phi)\eta_2^2,\nonumber\\
	c_1=&\big[\p_t+(u^1+U\phi')\p_x+(v^1-U_x\phi)\p_y-\mu\p_y^2\big]\eta_1^2-\big[(h^1+H\phi')\p_x+(g^1-H_x\phi)\p_y\big]\eta_2^2-2\mu\eta_2^2\p_y\eta_1^2\nonumber\\
	&+(\kappa-\mu)\eta_1^2\big[(\eta_2^2)^2+\p_y\eta_2^2\big]+(g^2-H_x\phi)\big[(\eta_1^2)^2-(\eta_2^2)^2\big]+(\p_xu^2+U_x\phi')\eta_1^2-(\p_xh^2+H_x\phi')\eta_2^2,\nonumber\\
	a_2=&\p_xh^2+H_x\phi'+(g^2-H_x\phi)\eta_2^2,\quad b_2=-2\kappa\p_y\eta_2^2-(\p_xu^2+U_x\phi')-(g^2-H_x\phi)\eta_1^2,\nonumber\\
	c_2=&\big[\p_t+(u^1+U\phi')\p_x+(v^1-U_x\phi)\p_y-\kappa\p_y^2\big]\eta_2^2-\big[(h^1+H\phi')\p_x+(g^1-H_x\phi)\p_y\big]\eta_1^2-2\kappa\eta_2^2\p_y\eta_2^2\nonumber\\
	&+(\p_xh^2+H_x\phi')\eta_1^2-(\p_xu^2+U_x\phi')\eta_2^2,
\end{align}
and
\[\eta_{10}^2(x,y)~:=~\frac{\p_yu_0^2+U(0,x)\phi''(y)}{h^2_0+H(0,x)\phi'(y)},\quad\eta_{20}^2(x,y)~:=~\frac{\p_yh_0^2+H(0,x)\phi''(y)}{h^2_0+H(0,x)\phi'(y)}.\]

Combining \eqref{new_tqu} with the fact $\tilde\psi=\p_y^{-1}\tilde h$, we get that
\[\bar h~=~(h^2+H\phi')\cdot\p_y\Big(\frac{\tilde\psi}{h^2+H\phi'}\Big),\]
and then, by $\tilde\psi|_{y=0}=0,$
\begin{align}\label{tpsi}
\tilde\psi(t,x,y)=\big(h^2(t,x,y)+H(t,x)\phi'(y)\big)\cdot\int_0^y\frac{\bar h(t,x,z)}{h^2(t,x,z)+H(t,x)\phi'(z)}dz.
\end{align}
Since $h^2+H\phi'\geq \delta_0$,  applying \eqref{normal1} in \eqref{tpsi} gives
\begin{align}\label{est_tpsi}
\Big\|\frac{\tilde\psi(t)}{1+y}\Big\|_{L^2(\Omega)}\leq 2\delta_0^{-1}\big\|h^2+H\phi'\big\|_{L^\infty([0,T_*]\times\Omega)}~\|\bar h(t)\|_{L^2(\Omega)}.
\end{align}
Moreover, through a similar process of getting the estimates \eqref{est_zeta},
we can obtain that there exists a constant $$C=C\Big(T_*,\delta_0, \phi, U, H,
\|(u^1,h^1)\|_{\h_l^5},\|(u^2,h^2)\|_{\h_l^5}\Big)>0,$$ 
such that
\begin{align}\label{est_abc}
	\|a_i\|_{L^\infty([0,T_*]\times\Omega)},~\|b_i\|_{L^\infty([0,T_*]\times\Omega)},~\|(1+y)c_i\|_{L^\infty([0,T_*]\times\Omega)}~\leq~C,\quad i=1,2.
\end{align}
Thus, we have from \eqref{est_tpsi} and \eqref{est_abc},
\begin{align}\label{est_c}
	\|(c_i\tilde\psi)(t)\|_{L^2(\Omega)}~\leq~C~\|\bar h(t)\|_{L^2(\Omega)},\quad i=1,2.
\end{align}

\begin{prop}
\label{Prop_uni}
Let $(u^1,v^1,h^1,g^1)$ and $(u^2, v^2, h^2,g^2)$ be two solutions of problem \eqref{bl_main} with respect to the initial data  $(u_0^1, h_0^1)$ and $(u_0^2, h_0^2)$ respectively, satisfying that $(u^j, h^j)\in\bigcap_{i=0}^mW^{i,\infty}\Big(0,T;H_l^{m-i}(\Omega)\Big)$ for $m\geq5,~j=1,2$. Then, there exists a positive constant 
$$C=C\Big(T_*,\delta_0, \phi, U, H,\|(u^1,h^1)\|_{\h_l^5},\|(u^2,h^2)\|_{\h_l^5}\Big)>0,$$
 such that for the quantities $(\bar u, \bar h)$ given by \eqref{new_tqu1},
\begin{align}\label{est_unique}
\frac{d}{dt}\|(\bar u, \bar h)(t)\|_{L^{2}(\Omega)}^2+\|(\p_y\bar u, \p_y\bar h)(t)\|_{L^2(\Omega)}^2
\leq ~&C\|(\bar u, \bar h)\|_{L^2(\Omega)}^2.
\end{align}
\end{prop}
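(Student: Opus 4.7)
The plan is to carry out a standard $L^2$ energy estimate on the system \eqref{eq_buh} for $(\bar u,\bar h)$, exploiting the same cancellation structure used in the a priori analysis and absorbing $\tilde\psi$ back into $\bar h$ via \eqref{tpsi}--\eqref{est_c}. First I would multiply $\eqref{eq_buh}_1$ by $\bar u$ and $\eqref{eq_buh}_2$ by $\bar h$, integrate over $\Omega$, and sum. The transport terms $[(u^1+U\phi')\p_x+(v^1-U_x\phi)\p_y]\bar u\cdot\bar u$ and the analogous one for $\bar h$ integrate by parts to zero up to the boundary contribution at $y=0$, which vanishes because $(v^1-U_x\phi)|_{y=0}=0$, together with the divergence free relation $\p_x(u^1+U\phi')+\p_y(v^1-U_x\phi)=0$. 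The mixed magnetic terms
\[
-\int_\Omega\big[(h^1+H\phi')\p_x+(g^1-H_x\phi)\p_y\big]\bar h\cdot\bar u\,dxdy-\int_\Omega\big[(h^1+H\phi')\p_x+(g^1-H_x\phi)\p_y\big]\bar u\cdot\bar h\,dxdy
\]
combine via integration by parts into a single boundary contribution plus a volume term involving $\p_x(h^1+H\phi')+\p_y(g^1-H_x\phi)=0$; since $(g^1-H_x\phi)|_{y=0}=0$ as well, the whole thing vanishes. This is the symmetric cancellation characteristic of MHD.

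Next I would treat the dissipative terms: integration by parts of $-\mu\p_y^2\bar u\cdot\bar u$ yields the good term $\mu\|\p_y\bar u\|_{L^2}^2$ plus a vanishing boundary contribution thanks to $\bar u|_{y=0}=0$, and similarly for $-\kappa\p_y^2\bar h\cdot\bar h$ using $\p_y\bar h|_{y=0}=0$. The cross dissipative correction $(\kappa-\mu)\eta_1^2\p_y\bar h\cdot\bar u$ is controlled by Cauchy--Schwarz and Young's inequality, using the $L^\infty$ bound $\|\eta_1^2\|_{L^\infty}\le C$ that follows from $h^2+H\phi'\geq\delta_0$ and the Sobolev embedding of $\h_l^5$; a fraction $\tfrac{\kappa}{2}\|\p_y\bar h\|_{L^2}^2$ is absorbed into the dissipation and the remainder into $\|\bar u\|_{L^2}^2$.

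The lower order terms $a_i\bar u+b_i\bar h+c_i\tilde\psi$ are estimated by Cauchy--Schwarz together with \eqref{est_abc} and, crucially, \eqref{est_c}. The step I expect to be the main (and really the only nontrivial) obstacle is the term involving $c_i\tilde\psi$: a priori $\tilde\psi$ has no independent $L^2$ control that closes in $(\bar u,\bar h)$, and the weight $(1+y)^{-1}$ on $c_i$ from \eqref{est_abc} is not enough by itself because $\tilde\psi$ is only weighted-$L^2$. The rescue is the representation \eqref{tpsi}: because $h^2+H\phi'\geq\delta_0$, one writes $\tilde\psi$ as $(h^2+H\phi')$ times an integral in $y$ of $\bar h/(h^2+H\phi')$, and then the Hardy-type inequality \eqref{normal1} turns $\|(1+y)^{-1}\tilde\psi\|_{L^2}$ into a constant multiple of $\|\bar h\|_{L^2}$. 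This is precisely \eqref{est_c} and gives $\|c_i\tilde\psi\|_{L^2}\le C\|\bar h\|_{L^2}$.

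Assembling everything yields
\[
\tfrac{1}{2}\tfrac{d}{dt}\|(\bar u,\bar h)(t)\|_{L^2(\Omega)}^2+\tfrac{\mu}{2}\|\p_y\bar u(t)\|_{L^2(\Omega)}^2+\tfrac{\kappa}{2}\|\p_y\bar h(t)\|_{L^2(\Omega)}^2\le C\|(\bar u,\bar h)(t)\|_{L^2(\Omega)}^2,
\]
which is \eqref{est_unique}. Uniqueness then follows from Gronwall's inequality applied to this estimate together with the equivalence (analogous to Lemma \ref{lem_equ}) between $\|(\tilde u,\tilde h)\|_{L^2}$ and $\|(\bar u,\bar h)\|_{L^2}$ obtained from \eqref{new_tqu1} and \eqref{tpsi}.
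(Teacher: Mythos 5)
Your proposal is correct and coincides with the paper's intended argument: the paper in fact omits the proof of Proposition \ref{Prop_uni}, stating only that it follows from ``the standard energy method and the estimates \eqref{est_abc}, \eqref{est_c}'', and your computation — the symmetric cancellation of the convection terms using the divergence-free conditions and $(v^1-U_x\phi)|_{y=0}=(g^1-H_x\phi)|_{y=0}=0$, integration by parts of the dissipation with $\bar u|_{y=0}=\p_y\bar h|_{y=0}=0$, absorption of $(\kappa-\mu)\eta_1^2\p_y\bar h\cdot\bar u$ into the $\kappa$-dissipation, and control of $c_i\tilde\psi$ through the representation \eqref{tpsi} and the Hardy-type inequality, i.e.\ \eqref{est_c} — supplies exactly the omitted steps. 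Nothing further is needed.
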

The above Proposition \ref{Prop_uni} can be proved by the standard energy method and the estimates \eqref{est_abc}, \eqref{est_c}, here we omit the proof for brevity of presentation. Then, by virtue of Proposition \ref{Prop_uni} we can prove the uniqueness of solutions to (\ref{bl_main}) as follows.

Firstly, if the initial data satisfying $(u^1,h^1)|_{t=0}=(u^2,h^2)|_{t=0}$, then we know that from \eqref{eq_buh}, $(\bar u, \bar h)$ admits the zero initial data, which implies that $(\bar u, \bar h)\equiv0$ by applying Gronwall's lemma to \eqref{est_unique}. Secondly, it yields that $\tilde\psi\equiv0$ by plugging $\bar h\equiv0$ into \eqref{tpsi}. Then, from \eqref{new_tqu1} we have $(u^1,h^1)\equiv(u^2,h^2)$ immediately through the following calculation:
\begin{align*}
	(u^1,h^1)-(u^2,h^2)~=~(\tilde u, \tilde h)~=~(\bar u, \bar h)+(\eta_1^2, \eta_2^2)~\tilde \psi~\equiv~0.
\end{align*}
Finally, we obtain $(v^1,g^1)~\equiv~(v^2,g^2)$ since $v^i=-\p_y^{-1}\p_x u^i$ and $g^i=-\p_y^{-1}\p_x h^i$ for $i=1,2,$ and show the uniqueness of solutions.

\begin{rem}
	We mention that in the independent recent preprint \cite{G-P}, the authors give a systematic derivation of MHD boundary layer models, and consider the linearization for the similar system as \eqref{bl_mhd} around some shear flow. By using the analogous transformation to \eqref{new}, they obtain the linear stability for the system in the Sobolev framework.
\end{rem}


\section{A coordinate transformation}
In this section, we will introduce another method to study the  initial-boundary value problem considered in this paper:
\begin{align}\label{pr_com}
\left\{
\begin{array}{ll}
\partial_tu_1+u_1\partial_xu_1+u_2\partial_yu_1=h_1\partial_x h_1+h_2\partial_y h_1+\mu\partial^2_yu_1,\\
\partial_th_1+\partial_y(u_2h_1-u_1h_2)=\kappa\partial_y^2h_1,\\
\partial_xu_1+\partial_yu_2=0,\quad \partial_xh_1+\partial_yh_2=0,\\
(u_1,u_2,\partial_yh_1,h_2)|_{y=0}=0,\quad
\lim\limits_{y\rightarrow+\infty}(u_1, h_1)=(U,H).
\end{array}
\right.
\end{align}
As we mentioned in Subsectin 2.3, by the divergence free condition,
\begin{align*}
\partial_x h_1+\partial_y h_2=0,
\end{align*}
there exists a stream function $\psi$, such that
\begin{align}
\label{4.3}
h_1=\p_y\psi,\quad h_2=-\p_x\psi,\quad \psi|_{y=0}=0,
\end{align}
moreover, $\psi$ satisfies
\begin{align}
\label{4.5}
\p_t \psi+u_1\p_x\psi+u_2\p_y\psi=\kappa\p_y^2\psi.
\end{align}
Under the assumptions that
\begin{align}
\label{4.6}
h_1(t,x,y)>0,\quad \hbox{or}\quad \p_y\psi(t,x,y)>0,
\end{align}
we can introduce the following transformation
\begin{align}
\label{4.7}
\tau=t,\ \xi=x,\ \eta=\psi(t,x,y),
\end{align}
and then, \eqref{pr_com} can be written in the new coordinates as follows:
\begin{align}\label{pr_crocco}
\left\{
\begin{array}{ll}
\p_\tau u_1+u_1\p_\xi u_1-h_1\p_\xi h_1+(\kappa-\mu)h_1\p_\eta h_1\p_\eta u_1=\mu h_1^2\p^2_\eta u_1,\\
\p_\tau h_1-h_1\p_\xi u_1+u_1\p_\xi h_1 =\kappa h_1^2 \p^2_\eta h_1,\\
(u_1, h_1\p_\eta h_1)|_{y=0}=0,\quad
\lim\limits_{\eta\rightarrow+\infty}(u_1,h_1)=(U,H).
\end{array}
\right.
\end{align}
\begin{rem}
The equations \eqref{pr_crocco} are quasi-linear equations, and there is no  loss of regularity term in \eqref{pr_crocco}, then we can use the classical Picard iteration scheme to establish the local existence.
However, in order to guarantee the coordinates transformation to be
valid, one needs to assume that $h_1(t,x,y)>0$. Moreover, one can obtain the stability of solutions to \eqref{pr_crocco} in the new coordintes $(\tau, \xi, \eta)$. It is necessary to transfer the well-posedness of solutions to the original equations \eqref{pr_com}. And then, there will be some loss of regularity.  
\end{rem}

\begin{rem}
	Based on the well-posedness result for MHD boundary layer in the Sobolev framework given in this paper, we will show the validity of the
vanishing limit of the  viscous MHD equations \eqref{eq_mhd} as $\epsilon\rightarrow0$ in a future work \cite{LXY}, that is, to show the solution to \eqref{eq_mhd} converges to a solution of ideal MHD equations, corresponding to $\ep=0$ in \eqref{eq_mhd}, outside the boundary layer, and to a boundary layer profile studied in this paper inside  the boundary layer. 
\end{rem}

\appendix


\section{Some inequalities}
In this appendix, we will prove the inequalities given
 in Lemma \eqref{lemma_ineq}. Such inequalities can  be found in \cite{MW1} and \cite{X-Z}, here we give a proof for readers' convenience.

\begin{proof}[\textbf{Proof of Lemma \ref{lemma_ineq}.}]
	\romannumeral1)
	 From $\lim\limits_{y\rightarrow+\infty}(fg)(x,y)=0$, it yields
	\begin{align*}
		\Big|\int_{\bT_x}(fg)|_{y=0}dx\Big|~=~&\Big|\int_{\Omega}\p_y(fg)dxdy\Big|\leq \int_{\Omega}|\p_yf\cdot g|dxdy+\int_{\Omega}|f\cdot\p_yg|dxdy\\
		\leq~& \|\p_yf\|_{L^2(\Omega)}\|g\|_{L^2(\Omega)}+\|f\|_{L^2(\Omega)}\|\p_yg\|_{L^2(\Omega)},
	\end{align*}
	and we get \eqref{trace}. \eqref{trace0} follows immediately by letting $g=f$ in \eqref{trace}.
	
	\romannumeral 2)
	 From $m\geq3$ and $|\alpha|+|\tilde\alpha|\leq m$, we know that there must be $|\alpha|\leq m-2$ or $|\alpha|\leq m-2$. Without loss of generality, we assume that $|\alpha|\leq m-2$, then for any $l_1,l_2\geq0$ with $l_1+l_2=l$, we have that by using Sobolev embedding inequality, 
	\begin{align*}
		\big\|\big(D^\alpha f\cdot D^{\tilde\alpha}g\big)(t,\cdot)\big\|_{L^2_{l+k+\tilde k}(\Omega)}\leq~&\big\|\ly^{l_1+k}D^\alpha f(t,\cdot)\big\|_{L^\infty(\Omega)}\cdot \big\|\ly^{l_2+\tilde k}D^{\tilde\alpha}g(t,\cdot)\big\|_{L^2(\Omega)}\\
		\leq~& C\big\|\ly^{l_1+k}D^\alpha f(t,\cdot)\big\|_{H^2(\Omega)}\|g(t)\|_{\h_{l_2}^{|\tilde\alpha|}}\\
		\leq~ &C\big\|f(t)\big\|_{\h_{l_1}^{|\alpha|+2}(\Omega)}\|g(t)\|_{\h_{l_2}^{m}},
	\end{align*}  
	which implies \eqref{Morse} because of $|\alpha|+2\leq m$.
	
	\romannumeral3) 
	For $\lambda>\frac{1}{2},$ it follows that by integration by parts,
	\begin{align*}
		\big\|\ly^{-\lambda}(\p_y^{-1}f)(y)\big\|_{L^2_y(\bR_+)}^2=&\int_0^{+\infty}\frac{\big[(\p_y^{-1}f)(y)\big]^2}{1-2\lambda}d(1+y)^{1-2\lambda}=\frac{2}{2\lambda-1}\int_0^{+\infty}(1+y)^{1-2\lambda}f(y)\cdot(\p_y^{-1}f)(y)dy\\
		\leq~& \frac{2}{2\lambda-1}\big\|\ly^{-\lambda}(\p_y^{-1}f)(y)\big\|_{L^2_y(\bR_+)}\cdot\big\|\ly^{1-\lambda}f(y)\big\|_{L^2_y(\bR_+)},
	\end{align*}
	which implies the first inequality of \eqref{normal}. 
	
	On the other hand,  note that for $\tilde\lambda>0,$
	\begin{align*}
		|(\p_y^{-1}f)(y)|\leq~&\int_0^y|f(z)|dz\leq \|(1+z)^{1-\tilde\lambda}f(z)\|_{L^\infty(0,y)}\cdot\int_0^y(1+z)^{\tilde\lambda-1}dz\\
		\leq~&\frac{(1+y)^{\tilde\lambda}-1}{\tilde\lambda}\|(1+y)^{1-\tilde\lambda}f(y)\|_{L^\infty_y(\bR_+)},
	\end{align*}
	which implies the second inequality of \eqref{normal} immediately.
	
Next, as $m\geq3$ and $|\alpha|+|\tilde\beta|\leq m$, we also get $|\alpha|\leq m-2$ or $|\tilde\beta|\leq m-2$. If $|\alpha|\leq m-2$, by using Sobolev embedding inequality and the first inequality of \eqref{normal}, we have for any $\lambda>\frac{1}{2}$,
\begin{align*}	
	\big\|\big(D^\alpha g\cdot\p_\tau^{\tilde\beta}\p_y^{-1}h\big)(t,\cdot)\big\|_{L^2_{l+k}(\Omega)}
	\leq~&\big\|\ly^{l+\lambda+k}D^\alpha g(t,\cdot)\big\|_{L^\infty(\Omega)}\cdot \big\|\ly^{-\lambda}\p_\tau^{\tilde\beta}\p_y^{-1}h(t,\cdot)\big\|_{L^2(\Omega)}\\
	\leq~&C\big\|\ly^{l+\lambda+k}D^\alpha g(t,\cdot)\big\|_{H^2(\Omega)}\cdot \big\|\ly^{1-\lambda}\p_\tau^{\tilde\beta}h(t,\cdot)\big\|_{L^2(\Omega)}\\
	\leq~&C\|g(t)\|_{\h_{l+\lambda}^{|\alpha|+2}}\|h(t)\|_{\h_{1-\lambda}^{|\tilde\beta|}}.
\end{align*}
If $|\tilde\beta|\leq m-2$, by Sobolev embedding inequality and the second inequality of \eqref{normal},
\begin{align*}	
	\big\|\big(D^\alpha g\cdot\p_\tau^{\tilde\beta}\p_y^{-1}h\big)(t,\cdot)\big\|_{L^2_{l+k}(\Omega)}
	\leq~&\big\|\ly^{l+\lambda+k}D^\alpha g(t,\cdot)\big\|_{L^2(\Omega)}\cdot \big\|\ly^{-\lambda}\p_\tau^{\tilde\beta}\p_y^{-1}h(t,\cdot)\big\|_{L^\infty(\Omega)}\\
	\leq~&C\|g(t)\|_{\h_{l+\lambda}^{|\alpha|}}\cdot \big\|\ly^{1-\lambda}\p_\tau^{\tilde\beta}h(t,\cdot)\big\|_{H^2(\Omega)}\\
	\leq~&C\|g(t)\|_{\h_{l+\lambda}^{|\alpha|}}\|h(t)\|_{\h_{1-\lambda}^{|\tilde\beta|+2}}.
\end{align*}
Thus, we get the proof of \eqref{normal0}, and then, \eqref{normal1} follows by letting $\lambda=1$ in \eqref{normal0}.

\romannumeral4) 
For any $\lambda>\frac{1}{2}$, 
	\begin{align*}
		\big|(\p_y^{-1}f)(y)\big|\leq\|f(y)\|_{L_y^1(\bR_+^2)}\leq \|\ly^{-\lambda}\|_{L_y^2(\bR_+)}\|\ly^{\lambda}f\|_{L_{y}^2(\bR_+)}\leq C\|\ly^{\lambda}f\|_{L_{y}^2(\bR_+)},
	\end{align*}
and we get \eqref{normal2}. 
	
For $m\geq2$ and $|\alpha|+|\tilde\beta|\leq m$, we get that $|\alpha|\leq m-1$ or $|\tilde\beta|\leq m-1$.
If $|\alpha|\leq m-1$, by using Sobolev embedding inequality and \eqref{normal2}, we have for any $\lambda>\frac{1}{2}$,
\begin{align*}	
	\big\|\big(D^\alpha f\cdot\p_\tau^{\tilde\beta}\p_y^{-1}g\big)(t,\cdot)\big\|_{L^2_{l+k}(\Omega)}
	\leq~&\big\|\ly^{l+k}D^\alpha f(t,\cdot)\big\|_{L_x^\infty L^2_y(\Omega)}\cdot \big\|\p_\tau^{\tilde\beta}\p_y^{-1}g(t,\cdot)\big\|_{L_x^2L^\infty_y(\Omega)}\\
	\leq~&C\big\|\ly^{l+k}D^\alpha f(t,\cdot)\big\|_{H^1(\Omega)}\cdot \big\|\ly^{\lambda}\p_\tau^{\tilde\beta}g(t,\cdot)\big\|_{L^2(\Omega)}\\
	\leq~&C\|f(t)\|_{\h_{l}^{|\alpha|+1}}\|g(t)\|_{\h_{\lambda}^{|\tilde\beta|}}.
\end{align*}
If $|\tilde\beta|\leq m-1$, by Sobolev embedding inequality and \eqref{normal2},
\begin{align*}	
	\big\|\big(D^\alpha f\cdot\p_\tau^{\tilde\beta}\p_y^{-1}g\big)(t,\cdot)\big\|_{L^2_{l+k}(\Omega)}
	\leq~&\big\|\ly^{l+k}D^\alpha f(t,\cdot)\big\|_{L^2(\Omega)}\cdot \big\|\p_\tau^{\tilde\beta}\p_y^{-1}g(t,\cdot)\big\|_{L^\infty(\Omega)}\\
	\leq~&C\|f(t)\|_{\h_{l}^{|\alpha|}}\cdot \big\|\ly^{\lambda}\p_\tau^{\tilde\beta}g(t,\cdot)\big\|_{H_1^xL_y^2(\Omega)}\\
	\leq~&C\|f(t)\|_{\h_{l}^{|\alpha|}}\|g(t)\|_{\h_{\lambda}^{|\tilde\beta|+1}}.
\end{align*}
Thus, we get \eqref{normal3}, and then complete the proof of this lemma. 

\end{proof}

\medskip \noindent
{\bf Acknowledgements:}
The second author is partially supported by NSFC (Grant No.11171213, No.11571231). The third author is  supported by the General Research Fund of Hong Kong, CityU No. 11320016, and he would like to thank Pierre Degond for the
initial discussion on this problem at Imperial College.

\bibliographystyle{plain}

\end{document}